\definecolor{dblue}{rgb}{0,0,.6}
\newtheorem{theorem}{Theorem}[section]
\theoremstyle{plain}
\newtheorem{corollary}[theorem]{Corollary}
\newtheorem{definition}[theorem]{Definition}
\newtheorem{lemma}[theorem]{Lemma}
\newtheorem{proposition}[theorem]{Proposition}
\newtheorem{remark}[theorem]{Remark}
\newcommand{\del}{\partial}
\newcommand{\Z}{\mathbb Z}
\newcommand{\Q}{\mathbb Q}
\newcommand{\C}{\mathbb C}
\newcommand{\CP}{\mathbb P}
\newcommand{\Hom}{\operatorname{Hom}}
\newcommand{\Spec}{\operatorname{Spec}}
\newcommand{\CH}{\operatorname{CH}}
\newcommand{\sing}{\operatorname{sing}} 
\newcommand{\red}{\operatorname{red}}
\newcommand{\Frac}{\operatorname{Frac}}
\newcommand{\Val}{\operatorname{Val}}
\newcommand{\F}{\mathbb F}
  \newcommand{\Tor}{\operatorname{Tor}}
  \newcommand{\Pf}{\operatorname{Pf}}
  \newcommand{\A}{\mathbb{A}}
\newcommand{\dashedlongrightarrow}{\xymatrix@1@=15pt{\ar@{-->}[r]&}}
\renewcommand{\longrightarrow}{\xymatrix@1@=15pt{\ar[r]&}}
\renewcommand{\mapsto}{\xymatrix@1@=15pt{\ar@{|->}[r]&}}
\renewcommand{\twoheadrightarrow}{\xymatrix@1@=15pt{\ar@{->>}[r]&}}
\newcommand{\hooklongrightarrow}{\xymatrix@1@=15pt{\ar@{^(->}[r]&}}
\newcommand{\congpf}{\xymatrix@1@=15pt{\ar[r]^-\sim&}}
\renewcommand{\cong}{\simeq}
\begin{document}    

\title[Torsion orders of Fano hypersurfaces]{Torsion orders of Fano hypersurfaces}

\author{Stefan Schreieder} 
\address{Institute of Algebraic Geometry, Leibniz University Hannover, Welfengarten 1, 30167 Hannover , Germany.}
\email{schreieder@math.uni-hannover.de}

\date{June 9, 2020} 
\subjclass[2010]{primary 14J70, 14C25; secondary 14M20, 14E08} 
%

\keywords{Hypersurfaces, algebraic cycles, unirationality, rationality, unramified cohomology.}

\begin{abstract}    We find new lower bounds on the torsion orders of very general Fano hypersurfaces over (uncountable) fields of arbitrary characteristic. Our results imply that unirational parametrizations of most Fano hypersurfaces need to have very large degree. Our results also hold in characteristic two, where they solve the rationality problem for hypersurfaces under a logarithmic degree bound, thereby extending a previous result of the author from characteristic different from two to arbitrary characteristic. 
\end{abstract}

\maketitle

\section{Introduction}

Let $X$ be a projective variety over a field $k$.
The torsion order $\Tor(X)$ of $X$ is  the smallest positive integer $e$, such that $e$ times the diagonal of $X$ admits a decomposition in the Chow group of $X\times X$, that is,
$$
e\Delta_X=[z\times X]+B\in \CH_{\dim X}(X\times X),
$$
where $z\in \CH_0(X)$ is a zero-cycle of degree $e$ and $B$ is a cycle on $X\times X$ that does not dominate the second factor. 
If no such decomposition exists, we put $\Tor(X)=\infty$. 
If $X$ is smooth and $k$ is algebraically closed, then $\Tor(X)$ is the smallest positive integer $e$ such that for any field extension $L$ of $k$, the kernel of the degree map $\CH_0(X_L)\to \Z$ is $e$-torsion, and $\Tor(X)=\infty$ if no such integer exists.
This notion goes back to Bloch \cite{bloch} (using an idea of Colliot-Th\'el\`ene) and Bloch--Srinivas \cite{BS}, and has for instance been studied in \cite{ACTP} and \cite{voisin}, and in the above form by Chatzistamatiou--Levine \cite{CL} and Kahn \cite{kahn}.

The torsion order is a stable birational invariant of smooth projective varieties; it is finite if $X$ is rationally connected and it is $1$ if $X$ is stably rational.
Moreover, if $f:Y\to X$ is a generically finite morphism, then $\Tor(X)$ divides $\deg(f)\cdot \Tor(Y)$.
In particular, the degree of any unirational parametrization of $X$ is divisible by $\Tor(X)$.

The torsion order is a powerful invariant of rationally connected varieties, which we would like to compute for interesting classes of varieties. 
In particular, it is desirable to do so 
for smooth hypersurfaces $X_d\subset \CP^{N+1}_k$ of degree $d\leq N+1$.
By a result of Roitman \cite{roitman} and Chatzistamatiou--Levine \cite[Proposition 5.2]{CL},  we have
\begin{align} \label{eq:Tor(Xd)|d!}
\Tor(X_d)\mid d! \, .
\end{align}
This yields an upper bound which holds over any field $k$. 

Finding lower bounds for $\Tor(X_d)$ over algebraically closed fields is in general a difficult problem.
By a result of Chatzistamatiou--Levine \cite[Theorem 8.2]{CL}, building on earlier work of Totaro \cite{totaro} and Koll\'ar \cite{kollar}, the torsion order of a very general complex hypersurface $X_d\subset \CP^{N+1}_\C$ of degree $
d\geq p^j\lceil \frac{N+2}{p^j+1}\rceil
$
 is divisible by $p^j$, where $p$ denotes a prime number, and where $p$ is odd or $N$ is even.
This yields 
non-trivial lower bounds roughly  in degrees $d> \frac{2}{3}N$.  
In \cite{Sch-JAMS}, the author dealt with lower degrees by showing that  
the torsion order of a very general hypersurface $X_d\subset \CP^{N+1}_\C$ of degree $d\geq \log_2N+2$ and dimension $N\geq 3$ is divisible by $2$.
This paper generalizes that result as follows.

\begin{theorem}\label{thm:torsion-order}
Let $k$ be an uncountable field.
Then the torsion order of a very general Fano hypersurface $X_d\subset \CP^{N+1}_{k}$ of degree $d\geq 4$ is divisible by  
every integer $m\leq d-\log_2N$ that is invertible in $k$. 
\end{theorem}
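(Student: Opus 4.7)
The plan is to extend the slope-method degeneration from \cite{Sch-JAMS}, where the analogous divisibility statement was proved for $m = 2$. As a first reduction, divisibility of $\Tor(X_d)$ by an integer $m$ is equivalent, by the Chinese remainder theorem, to divisibility by each prime power factor of $m$. Hence it suffices to fix a prime power $p^r \leq d - \log_2 N$ with $p$ invertible in $k$ and to show $p^r \mid \Tor(X_d)$ for very general $X_d$.

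To this end, the idea is to construct a degeneration $\mathcal{X} \to \Spec R$ of $X_d$, over a suitable discrete valuation ring $R$, to a singular hypersurface $X_0 \subset \PP^{N+1}$ admitting a universally $\CH_0$-trivial alteration $\widetilde{X}_0 \to X_0$ with non-zero unramified cohomology $H^i_{nr}(\widetilde{X}_0, \Z/p^r) \neq 0$ in some degree $i \geq 1$. By the specialization principle for unramified cohomology developed in its strongest form in \cite{Sch-JAMS}, which circumvents the need for a simultaneous resolution of singularities of $X_0$, the existence of such a class forces the torsion order of the very general fiber to be divisible by $p^r$. The construction of $X_0$ is recursive: one degenerates $X_d$ step by step through hypersurfaces of the form $z_0 G + F = 0$, where at each step the projection from the new coordinate $z_0$ exhibits the special fiber as birationally dominated by a hypersurface of degree $d-1$ in a projective space of roughly half the dimension, following the pattern from the mod-$2$ case. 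After $\lceil \log_2 N \rceil$ iterations, the ambient dimension has dropped to a bounded constant while the degree has only decreased to $m \geq p^r$; in this final regime, the Chatzistamatiou--Levine--Koll\'ar--Totaro bound \cite[Theorem 8.2]{CL} produces the required non-zero mod-$p^r$ unramified class.

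The main obstacle is the careful design of the intermediate degenerations: at each step one must ensure both that the special fiber admits a universally $\CH_0$-trivial alteration and that the unramified cohomology class constructed in the base case is preserved under every specialization. Concretely, this requires explicit control of the singularities introduced at each stage and a cocycle-level description of the class so that it can be transported outward through the iteration. A secondary difficulty is that, unlike the case $m = 2$ where characteristic-$2$ techniques such as differential forms and Cartier operators are available, here one needs an argument that works uniformly whenever $p \neq \operatorname{char}(k)$; this forces a purely geometric construction of the degenerations, with the invertibility of $p$ in $k$ entering only through the coefficient group $\Z/p^r$ used for the unramified cohomology.
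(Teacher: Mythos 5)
There is a genuine gap, and it lies at the heart of your plan. Your recursion is supposed to bottom out by invoking \cite[Theorem 8.2]{CL} to ``produce the required non-zero mod-$p^r$ unramified class.'' But that theorem (and the underlying work of Koll\'ar and Totaro) is based on degeneration to characteristic $p$ and the action of correspondences on global differential forms on inseparable covers; it gives a divisibility statement for torsion orders and produces no unramified cohomology class whatsoever. (You also have the attributions reversed: differential forms and Cartier operators are the tools of the Koll\'ar--Totaro--Chatzistamatiou--Levine method, not of the $m=2$ case in \cite{Sch-JAMS}, which already works with Pfister quadrics and unramified $\Z/2$-cohomology.) Without a source for a class of order $m$ (or $p^r$) in $H^n_{nr}(\cdot,\mu_m^{\otimes n})$ on the special fibre, the degeneration principle has nothing to consume. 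Moreover, even granting such a class on the innermost special fibre of your tower, there is no mechanism to ``transport it outward through the iteration'': the degeneration criterion (Proposition \ref{prop:degeneration}) takes an unramified class on a component of the \emph{immediate} special fibre, vanishing on (an alteration of) the singular locus, and outputs divisibility of the torsion order of the generic fibre; it does not compose through a chain of degenerations, and ``torsion order of a component of the special fibre is divisible by $m$'' does not by itself propagate upward.

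The missing idea is precisely the paper's main new ingredient: universal relations in Milnor K-theory modulo $m$ (Definition \ref{def:universal-relation}), realized concretely by the Fermat--Pfister forms of degree $m$ (Proposition \ref{prop:x1,...,xn}), which replace the Pfister quadrics of the $m=2$ case. With these in hand one performs a \emph{single} degeneration of the very general $X_d$ to an explicit hypersurface $Z=\{F=0\}$ whose blow-up along a linear space fibres over $\CP^n_k$ (with $N\leq 2^n-2+n$, whence $n\approx\log_2 N$) in degree-$m$ hypersurfaces built from the Fermat--Pfister coefficients; the symbol $(x_1,\dots,x_n)$ pulled back from the base is unramified of order $m$ (Theorem \ref{thm:unramified-coho}, whose nontriviality step is itself a specialization argument over a DVR with a rational section, not an appeal to \cite{CL}), and the universal relation is exactly what lets one check its vanishing on the exceptional divisor covering the singular locus (Corollary \ref{cor:Pfister-forms}). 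The logarithmic degree bound comes from the dimension $n$ of the base of this one fibration, not from iterating $\log_2 N$ degenerations each dropping the degree by one. Your CRT reduction to prime powers is harmless but unnecessary, and the ``universally $\CH_0$-trivial alteration'' hypothesis is stronger than what Proposition \ref{prop:degeneration} actually needs (an alteration of degree coprime to $m$ on which the class dies along the singular locus).
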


The first new case concerns very general quintic fourfolds $X_5\subset \CP^5_k$ over (algebraically closed) 
fields of characteristic different from $3$, for which we get $3\mid \Tor(X_5)$.
If $\operatorname{char} k=0$, then $\Tor(X_5)$ is also divisible by $2$ and $5$ (see \cite{totaro} and \cite[Theorem 8.2]{CL}) and so our result determines all prime factors of $\Tor(X_5)$ by (\ref{eq:Tor(Xd)|d!}).

The strength of Theorem \ref{thm:torsion-order} lies in its asymptotic behaviour for large $N$.
To illustrate this, let $X_{100}\subset \CP^{100}_\C$ be a very general complex hypersurface of degree $100$.
Then $\Tor(X_{100})$ is divisible by 
$$
2^5 \cdot 3^3\cdot 5^2\cdot 7\cdot \prod_{\substack{p\leq 89\\  p\ \text{prime}}} p =718\, 766\, 754\, 945\, 489\, 455\, 304\, 472\, 257\, 065\, 075\, 294\, 400,
$$
while it was previously only known to be divisible by $2^3\cdot 3^2\cdot 5^2\cdot 7\cdot 11=138\, 600$. 

Even though smooth hypersurfaces $X_d\subset \CP_\C^{N+1}$ of degree $d$ with $d!\leq \log_2(N+1)$ are known to be unirational \cite{HMP,BR}, very general Fano hypersurfaces of large degree are conjecturally not unirational.
While this paper does not solve this problem, it does show that for most Fano hypersurfaces, unirational parametrizations  
need to have enormously large degree, strengthening previous bounds on this problem: In \cite[Theorem 4.3]{kollar}, Koll\'ar gave lower bounds on the degree of a uniruled parametrization of high-degree Fano hypersurfaces, and, relying on \cite{totaro}, Chatzistamatiou--Levine produced slightly better bounds for unirational parametrizations in \cite[Theorem 8.2]{CL}.

In \cite{Sch-JAMS} it was shown that very general hypersurfaces of dimension $N\geq 3$ and degree $d\geq \log_2N+2$ are stably irrational over any uncountable field of characteristic different from two.
This improved \cite{kollar,totaro} in characteristic $\neq 2$,  but the Koll\'ar--Totaro bound $d\geq 2\lceil\frac{N+2}{3}\rceil$ remained the best known result in characteristic two.

Applying Theorem \ref{thm:torsion-order} to $m=3$,  
this paper solves the rationality problem for hypersurfaces in characteristic two under a logarithmic degree bound.

\begin{corollary}\label{cor:torsion-order-char2}
Let $k$ be an uncountable field of characteristic two.
Then a very general hypersurface $X\subset \CP^{N+1}_{k}$ of degree $d\geq \log_2N+3$ is stably irrational.
\end{corollary}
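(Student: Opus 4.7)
My plan is a direct reduction to Theorem \ref{thm:torsion-order} with the parameter $m = 3$. Since $\operatorname{char}(k) = 2$, the integer $3$ is invertible in $k$, so it is an admissible choice for $m$. The numerical hypothesis $m \leq d - \log_2 N$ of Theorem \ref{thm:torsion-order} then specializes to $d \geq \log_2 N + 3$, which is exactly the assumption of the corollary, and this in turn forces $d \geq 4$ once $N \geq 2$, matching the residual degree bound in the hypothesis of Theorem \ref{thm:torsion-order}.

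I would then split the argument on whether $X$ is Fano. In the Fano range $d \leq N+1$, Theorem \ref{thm:torsion-order} yields $3 \mid \Tor(X)$, and in particular $\Tor(X) \geq 3$. Since the torsion order is a stable birational invariant that equals $1$ on projective space (recalled in the introduction), this rules out stable rationality of $X$. In the complementary range $d \geq N+2$, no torsion-order input is needed: the canonical bundle $K_X \cong \OO_X(d - N - 2)$ is effective, so a very general smooth hypersurface of this degree is not uniruled and \emph{a fortiori} not stably rational. The low-dimensional corner cases $N \in \{1,2\}$, in which only this second range arises under the bound $d \geq \log_2 N + 3$, are absorbed into the same argument (for $N=1$ one simply notes that a smooth plane curve of degree $d \geq 3$ has positive genus).

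There is essentially no obstacle intrinsic to the corollary itself: all the substance is absorbed by Theorem \ref{thm:torsion-order}, whose whole point is to be uniform in the characteristic as long as $m$ is a unit. The only step that requires a moment of care is the boundary between the Fano and non-Fano ranges, i.e., checking that the two cases above jointly cover every $(N,d)$ satisfying $d \geq \log_2 N + 3$; this is a purely numerical verification along the lines sketched above.
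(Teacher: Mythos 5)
Your reduction is exactly the paper's (implicit) proof of this corollary: the paper offers no separate argument beyond the sentence ``Applying Theorem \ref{thm:torsion-order} to $m=3$,'' and your verification that $3$ is invertible in characteristic two and that $m=3\leq d-\log_2N$ recovers the stated bound is precisely what is intended; the conclusion ``$3\mid\Tor(X)$ hence $\Tor(X)\neq 1$ hence not stably rational'' is the right one. The only imprecise step is in your non-Fano range $d\geq N+2$: in characteristic two, effectivity of $K_X$ does \emph{not} imply that $X$ is not uniruled (there are inseparably uniruled, even unirational, smooth varieties of general type in positive characteristic, e.g.\ Shioda's Fermat surfaces), so the chain ``$K_X$ effective $\Rightarrow$ not uniruled $\Rightarrow$ not stably rational'' has a false first link as written. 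This is easily repaired: $K_X$ effective implies $X$ is not \emph{separably} uniruled, which still precludes stable rationality; alternatively, a decomposition of the diagonal with $e=1$ forces $H^0(X,\Omega^i_X)=0$ for $i\geq 1$ in any characteristic (as in \cite{totaro,CL}), contradicting $h^0(X,K_X)>0$. With that correction your case analysis covers all $(N,d)$ with $d\geq\log_2N+3$ and the proof is complete.
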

 
The method of this paper is flexible and applies to other types of varieties as well.
To illustrate this, we include here the example of cyclic covers of projective space.

\begin{theorem} \label{thm:cyclic-cover:intro}
Let $k$ be an uncountable field and let $m\geq 2$ be an integer that is invertible in $k$.
Then the torsion order of a cyclic $m:1$ cover $X\to \CP^{N}_{k}$ branched along a very general hypersurface of degree $d\geq m (\lceil \frac{\lceil \log_2N \rceil+1}{m}\rceil +2)$ (with $m\mid d$) 
is divisible by $m$.
\end{theorem}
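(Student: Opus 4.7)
The plan is to adapt the degeneration argument used for Theorem \ref{thm:torsion-order} to the family of cyclic $m$-covers of $\CP^N_k$. By the semicontinuity of the decomposition-of-the-diagonal obstruction under the specializations used in this paper, it will suffice to produce a single cyclic $m$-cover $X_0 \to \CP^N_k$, obtained by specializing the branch locus, whose torsion order is divisible by $m$; the very general cover in the family will then inherit the same divisibility.

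I would build such an $X_0$ by induction on $N$, mimicking the halving trick used for hypersurfaces. At the inductive step one specializes the branch divisor $B = \{f=0\}$ to a hypersurface $B_0 = \{f_0 = 0\}$ with $f_0$ lying in the square of the ideal of a linear subspace $\Lambda \subset \CP^N_k$ of dimension $\lfloor N/2 \rfloor$, so that (after blowing up $\Lambda$) the induced cover contains a distinguished component birational to a cyclic $m$-cover of $\CP^{\lfloor N/2 \rfloor}_k$ branched along a hypersurface of smaller but controlled degree, the divisibility of the degree by $m$ being maintained at each step. Iterating this roughly $\lceil \log_2 N \rceil$ times reduces matters to a base case of a cyclic $m$-cover of $\CP^1_k$ or $\CP^2_k$ branched along a hypersurface of degree at least $2m$, for which one checks directly---using a cyclic-algebra construction on the function field of the cover, in the spirit of Artin--Mumford---that $m \mid \Tor(X_0)$.

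The main obstacle will be keeping the obstruction alive at each step of the degeneration. Concretely, one has to verify that an appropriate resolution of the singular cyclic cover $X_0$ is universally $\CH_0$-trivial over a suitable stratum of the branch locus, so that an unramified $\mu_m$-class on the lower-dimensional distinguished component lifts to a non-trivial class on $X_0$ and thereby obstructs an $m$-divisible decomposition of the diagonal on the generic member of the family. Once the inductive construction is carried out and the compatibility at each step is checked, the bookkeeping accounts for the bound on $d$: each halving of $N$ costs a fixed amount of degree, while the requirement $m \mid \deg B$ at every stage is responsible for the extra factor of $m$ in front of $\lceil (\lceil \log_2 N \rceil + 1)/m \rceil + 2$, as compared with the hypersurface bound of Theorem \ref{thm:torsion-order}.
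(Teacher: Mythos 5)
Your proposal diverges from the paper's proof in an essential way, and the divergence is where the gap lies. The paper does not run an induction on $N$ with repeated halvings; it performs a \emph{single} degeneration of the branch locus to one explicit hypersurface $B=\{F=0\}$ built from a polynomial of twisting type and the coefficients $c_j$ of a Fermat--Pfister form of degree $m$. Blowing up a linear space of codimension $n+1$ (with $N=n+r$, $r\leq 2^n-2$, so $n\approx\lceil\log_2N\rceil$) exhibits a birational model $Y$ of the singular cover $Z$ that fibres over $\CP^n_k$ with smooth degree-$m$ hypersurface fibres; Theorem \ref{thm:unramified-coho} then produces an unramified class $f^\ast(x_1,\dots,x_n)\in H^n_{nr}(k(Y)/k,\mu_m^{\otimes n})$ of order $m$, the vanishing on the exceptional divisor is supplied by Corollary \ref{cor:Pfister-forms}, and Proposition \ref{prop:degeneration} concludes. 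The logarithm in the degree bound comes from the degree $m\lceil\frac{n+1}{m}\rceil$ of the twisting-type polynomial $g$ needed for the ramification analysis over the coordinate hyperplanes of $\CP^n$, not from an accumulation of costs over $\log_2N$ degeneration steps.

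Concretely, three steps of your plan would fail or are unsubstantiated. First, for $m>2$ it is not true that putting $f_0$ in $I_\Lambda^2$ makes the cyclic $m$-cover acquire, after blowing up $\Lambda$, a component birational to a cyclic $m$-cover of $\CP^{\lfloor N/2\rfloor}$; the local structure of $y^m=f_0$ along $\Lambda$ depends on the vanishing order modulo $m$, and no construction is offered. Second, ``semicontinuity of the decomposition-of-the-diagonal obstruction'' is not enough when the special fibre is singular or reducible: one needs, at \emph{every} degeneration step, an unramified class of order $m$ on a component of the special fibre together with the vanishing of its restriction (after an alteration of degree prime to $m$) to all subvarieties over the singular locus --- this is exactly the content of Proposition \ref{prop:degeneration}, and iterating it $\lceil\log_2N\rceil$ times multiplies the verification burden without any indication of how the class and its vanishing properties survive each step. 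Third, the base case you propose (a cyclic-algebra class on a cover of $\CP^1$ or $\CP^2$) lives in degree $2$ unramified cohomology, whereas the obstruction that actually works in dimension $N$ is a degree-$n$ class with $n\approx\log_2N$; your proposal contains no mechanism --- such as the universal relations in Milnor K-theory and the Fermat--Pfister hypersurfaces of the paper --- for producing a class of the required degree or for killing it on the locus where the cover degenerates. As written, the argument does not close.
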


In particular, under the above degree bound, very general cyclic $m:1$ covers are stably irrational.
Even for $k=\C$, 
this extends previous results on this problem substantially, see \cite{kollar2,CTP2,oka}. 
For $m=2$, a similar result has previously been proven in \cite[Theorem 9.1]{Sch-JAMS}.
 
The above results are proven via a version of the degeneration technique that the author developed in \cite{Sch-duke,Sch-JAMS} and which improved the method of Voisin \cite{voisin} and Colliot-Th\'el\`ene--Pirutka \cite{CTP}.
An essential ingredient in this approach is the construction of varieties  
 that have nontrivial unramified cohomology. 

Constructing rationally connected varieties with nontrivial unramified cohomology is a subtle problem.
In degree two, the first examples are due to Saltman \cite{saltman}.
Building on \cite{artin-mumford}, the first examples in degree three and with $\Z/2$-coefficients have been constructed in \cite{CTO}.
This has later been generalized to arbitrary degrees and $\Z/\ell$-coefficients for any prime $\ell$ in \cite{peyre,asok}.
Starting with \cite{CTO}, all these constructions rely on norm varieties attached to symbols in Milnor K-theory mod $\ell$. 
Norm varieties attached to symbols of length two are Brauer--Severi varieties. 
For $\ell\neq 2$, such varieties have large degree, compared to their dimensions, which hints that they are not useful for our purposes. 
Moreover, for symbols of length at least three, norm varieties for $\ell\neq 2$ are very intricate objects, whose construction, due to Rost, relies inductively on the Bloch--Kato conjecture in lower degrees, see \cite{SJ}.
The situation is special for $\ell=2$, where norm varieties are Pfister quadrics, 
which are much simpler objects. 
Pfister quadrics are used in \cite{Sch-JAMS}, which explains the restriction to the prime $2$.

This paper introduces 
 for any integer $m$ large classes of  hypersurfaces with unramified $\Z/m$-cohomology, see Theorem \ref{thm:unramified-coho} below.
 As in \cite{Sch-JAMS}, an important ingredient is a quite flexible degeneration argument which allows to prove nontriviality of certain classes without any deep result from K-theory, see item (\ref{item:nonzero}) in Theorem \ref{thm:unramified-coho}. 
Besides the ideas from \cite{Sch-JAMS}, the main new ingredient of this paper is the definition and usage of universal relations in Milnor K-theory, see Definition \ref{def:universal-relation} below.
Our approach is elementary, works for any positive integer $m$ (not necessarily prime) and does not rely on norm varieties, nor on Voevodsky's proof of the Bloch--Kato conjectures.
%

Asok's examples \cite{asok} with nontrivial unramified $\Z/\ell$-cohomology in degree $n$ have dimension $N\gg \ell^n$, which grows rapidly with $n$.
For any given prime $\ell$ and integer $N\geq 3$, this led Asok \cite[Question 4.5]{asok} to ask  for  general restrictions on the possible degrees in which rationally connected complex varieties of dimension $N$ can have nontrivial unramified $\Z/\ell$-cohomology. 
This is a quite subtle problem already for rationally connected threefolds, where by a result of Voisin \cite{voisin-integral-hodge} and Colliot-Th\'el\`ene--Voisin \cite[Th\'eor\`eme 1.2]{CTV}, it boils down to understanding the possible Brauer groups,
 see \cite[Remarks 4.7, 4.8 and 4.10]{asok}.  

As a consequence of our proof of Theorem \ref{thm:torsion-order}, we obtain the following uniform result in arbitrary dimension; the case $m=2$ is due to \cite[Theorem 1.5]{Sch-JAMS}.

\begin{theorem} \label{thm:asoks-question}
Let $m,n\geq 2$ and $N\geq 3$ be integers with $ \log_2(m+1)\leq n\leq N+1-m$.
Then there is a rationally connected smooth complex projective variety $X$ of dimension $N$ such that the $n$-th unramified cohomology  $H^n_{nr}(\C(X)/\C,\Z/m)$ of $X$ contains an element of order $m$.
\end{theorem}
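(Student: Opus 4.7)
The plan is to produce a base example in the minimal allowed dimension $N_0:=n+m-1$ and then inflate to arbitrary $N\geq N_0$ by taking a product with projective space. The hypothesis $n\leq N+1-m$ says exactly that $N_0\leq N$, so this two-step strategy covers all admissible parameters, and the hypothesis $n\geq \log_2(m+1)$ will enter only at the base case.

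For the base case, I construct a smooth complex projective rationally connected variety $Y$ of dimension $N_0=n+m-1$ such that $H^n_{nr}(\C(Y)/\C,\Z/m)$ contains an element of order $m$. This is obtained by applying Theorem \ref{thm:unramified-coho} to an appropriate degeneration of a Fano hypersurface and passing to a resolution of singularities; the parameters are tuned so as to saturate the degree bound of Theorem \ref{thm:torsion-order}. The arithmetic condition $n\geq \log_2(m+1)$, equivalently $2^n\geq m+1$, is exactly the bound needed to set up a length-$n$ universal relation in Milnor K-theory in the sense of Definition \ref{def:universal-relation} which realizes a class of order exactly $m$; item (\ref{item:nonzero}) of Theorem \ref{thm:unramified-coho} then certifies nontriviality of the resulting unramified cohomology class without any appeal to the Bloch--Kato conjectures.

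For the general case $N\geq N_0$, set
$$
X:=Y\times \CP^{\,N-N_0}_{\C}.
$$
Then $X$ is a smooth complex projective variety of dimension $N$, and it is rationally connected because both factors are. Moreover $X$ and $Y$ are stably birational, and unramified cohomology is a stable birational invariant of smooth projective varieties, so
$$
H^n_{nr}(\C(X)/\C,\Z/m)\cong H^n_{nr}(\C(Y)/\C,\Z/m),
$$
which still contains an element of order $m$ by the base case. This produces the required variety.

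The main obstacle is the base case. The hard part is to verify that a cohomology class constructed from an $n$-fold Milnor symbol survives a carefully chosen degeneration and descends to an unramified class of exact order $m$ on a resolution of the special fibre, while simultaneously ensuring that this resolution is rationally connected. This is precisely where the flexibility of the methods behind Theorem \ref{thm:unramified-coho} is indispensable, and where the hypothesis $n\geq \log_2(m+1)$ genuinely enters. The case $m=2$ is \cite[Theorem 1.5]{Sch-JAMS} and relies on Pfister quadrics; the novelty for $m\geq 3$ is the replacement of norm varieties by universal relations in Milnor K-theory, which is what makes the construction function for arbitrary --- not necessarily prime --- values of $m$.
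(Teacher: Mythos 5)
Your reduction step is correct and is exactly what the paper does: the hypothesis $n\leq N+1-m$ is used to reduce to the minimal dimension $N_0=n+m-1$ by multiplying with $\CP^{N-N_0}$ and invoking stable birational invariance of unramified cohomology. The problem is that your ``base case'' is not a proof but a promissory note, and the one idea it omits is precisely the one that Theorem \ref{thm:unramified-coho} does \emph{not} supply, namely rational connectedness. Theorem \ref{thm:unramified-coho} produces an unramified class of order $m$ on the variety $Y=Y_{n,r}$ (the blow-up $Bl_PZ$ of the very singular hypersurface $Z$ from the proof of Theorem \ref{thm:torsion-order-2}), but it says nothing about whether a resolution of $Y$ is rationally connected, and a priori there is no reason it should be: $Z$ is a degenerate, non-general hypersurface. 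The paper's argument at this point is specific: with $r=m-1$ the morphism $f:Y\to\CP^n$ has generic fibre a smooth hypersurface of degree $m$ in $\CP^{r+1}=\CP^{m}$, which is (borderline) Fano, hence rationally chain connected by Campana/Koll\'ar--Miyaoka--Mori and rationally connected over $\C$; the Graber--Harris--Starr theorem \cite{GHS} then makes any resolution of the total space rationally connected. This is also the true role of the hypothesis $n\leq N+1-m$: it guarantees $r\geq m-1$, i.e.\ that the fibres of $f$ can be taken Fano. Without this fibration argument your construction only yields a (possibly non--rationally connected) variety with nontrivial unramified cohomology, which is not the statement.

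Two smaller inaccuracies. First, the condition $2^n\geq m+1$ is not what is ``needed to set up a length-$n$ universal relation'': the universal relation of Proposition \ref{prop:x1,...,xn} exists for all $n$ and $m$. It is needed so that $r=m-1\leq 2^n-2$, i.e.\ so that $Y_{n,r}$ embeds into the Fermat--Pfister hypersurface $W=Y_{n,2^n-2}$, which is the hypothesis under which Theorem \ref{thm:unramified-coho} and Corollary \ref{cor:Pfister-forms} apply. Second, one does not apply Theorem \ref{thm:unramified-coho} ``to a degeneration of a Fano hypersurface'': the class lives on the special (singular) member itself, and the degeneration over a discrete valuation ring with parameter $t$ enters only in item (\ref{item:nonzero}) of Theorem \ref{thm:unramified-coho}, to certify that the class has exact order $m$. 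To complete your proof you should take $Y=Y_{n,m-1}$ from Section \ref{sec:main-proofs}, quote the order-$m$ unramified class already established there, and add the Fano-fibration plus Graber--Harris--Starr argument for rational connectedness of a resolution $X$ of $Y$; birational invariance of $H^n_{nr}$ then finishes the base case.
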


It is natural to wonder whether the upper bound in the above theorem is sharp.
For instance, is 
the unramified $\Z/2$-cohomology of 
a rationally connected smooth complex projective variety $X$ 
 trivial in degree $n=\dim X$? 

\begin{remark}
The main results of this paper are formulated over uncountable fields.
However, our proofs show that one can also write down examples over small fields, see e.g.\ Section \ref{sec:examples} below for some explicit examples over $\Q(t)$ and $\F_p(t,s)$.
 Moreover, the varieties in Theorem \ref{thm:asoks-question} may be chosen to be defined over $\Q$, see Remark \ref{rem:thm:asok:Q} below. 
\end{remark}

\section{Preliminaries}

\subsection{Conventions}
A variety is an integral separated scheme of finite type over a field.
For a scheme $X$, we denote its codimension one points by $X^{(1)}$.
A property holds for a very general point of a scheme if it holds at all closed points inside some countable intersection of open dense subsets.   

\subsection{Degenerations} \label{subsec:degeneration}
Let $R$ be a discrete valuation ring with fraction field $K$ and algebraically closed residue field $k$.
Let $\mathcal X\to \Spec R$ be a proper flat morphism with generic fibre $X$ and special fibre $Y$.
Then we say that $X$ degenerates to $Y$.
We also say that the base change of $X$ to any larger field degenerates (or specializes) to $Y$.
For instance, if $\mathcal X\to B$  is a proper flat morphism of varieties over an algebraically closed uncountable field, then the fibre $X_t$ over a very general point $t\in B$ degenerates to the fibre $X_0$ for any closed point $0\in B$ in the above sense, see e.g.\ \cite[\S 2.2]{Sch-duke}.
In particular, a very general hypersurface $X\subset \CP^{n+1}_k$ over an algebraically closed uncountable field $k$ specializes to any given hypersurface of the same dimension and degree over $k$.

\subsection{Alterations} \label{subsec:alteration}
Let $Y$ be a variety over an algebraically closed field $k$.
An alteration of $Y$ is a proper generically finite surjective morphism $\tau:Y'\to Y$, where $Y'$ is a non-singular variety over $k$.
De Jong \cite{deJ} proved that alterations always exist. 
Later, Gabber showed that one can additionally require that $\deg(\tau)$ is prime to any given prime $\ell\neq \operatorname{char}(k)$.
Temkin \cite[Theorem 1.2.5]{temkin} generalized this further, ensuring that $\deg (\tau)$ is a power of the characteristic of $k$ (or one if $\operatorname{char}(k)=0$).

\subsection{Milnor K-theory}
Let $L$ be a field.
Recall that Milnor K-theory $K_n^M(L)$ of $L$ in degree $n\geq 2$ is defined as the quotient of $(L^\ast)^{\otimes n}$, where $L^\ast$ denotes the multiplicative group of units in $L$, by the subgroup generated by tensors of the form $a_1\otimes\dots \otimes a_n$ with $a_i+a_{i+1}=1$ for some $1\leq i\leq n-1$.
Moreover, $K_0^M(L)=\Z$ and $K_1^M(L)=L^\ast$.
The image of a tensor $a_1\otimes\dots \otimes a_n$ in $K_n^M(L)$ is denoted by $(a_1,\dots ,a_n)$.
The direct sum $K_\ast^M(L):=\bigoplus _{n\geq 0} K_n^M(L)$ has a natural product structure, induced by the tensor product.

For an integer $m\geq 2$, the defining relation for Milnor K-theory implies the following basic relation in Milnor K-theory mod $m$, see  \cite[Lemma 1.3]{milnor}.

\begin{lemma} \label{lem:(b1,...,bn)=0}
Let $L$ be a field and let $b_1,\dots ,b_n\in L^\ast$ such that $\sum b_i=c^m$ for some $c\in L$.
Then
$
(b_1,\dots ,b_n)=0\in K_n^M(L)/m .
$
\end{lemma}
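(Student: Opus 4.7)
The plan is to reduce the statement to Milnor's classical observation that $(a_1,\dots,a_n)=0$ whenever the $a_i$ are units with $\sum a_i=1$, which holds \emph{integrally} in $K_n^M(L)$, and then use the mod $m$ aspect only to discard correction terms coming from $c^m$.

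First, I would dispose of the degenerate case $c=0$ (so $\sum b_i=0$). Picking up a standard trick, one sets $a_i=-b_i/b_n$ for $i<n$, notes $\sum_{i=1}^{n-1}a_i=1$, and proceeds as below to get $(b_1,\dots,b_{n-1})=0$ in $K_{n-1}^M(L)$ up to multiples of $[-1]$; multiplying by $(b_n)$ still gives zero (using $(a,-a)=0$ and skew-symmetry). So we may assume $c\in L^\ast$. The main reduction is then the following: set $a_i:=b_i/c^m\in L^\ast$, so $\sum a_i=1$. Using that $K_1^M(L)=L^\ast$ is written multiplicatively and that the product in Milnor K-theory distributes over multiplication in each slot, expand
\[
(b_1,\dots,b_n)=(a_1c^m,\dots,a_nc^m)=\sum_{S\subseteq\{1,\dots,n\}} m^{|S|}\cdot(x_1^S,\dots,x_n^S),
\]
where $x_i^S=c$ for $i\in S$ and $x_i^S=a_i$ for $i\notin S$. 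Modulo $m$, only the term $S=\emptyset$ survives, so it suffices to show $(a_1,\dots,a_n)=0$ in $K_n^M(L)$ (integrally).

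For this sub-claim I would induct on $n$. The base $n=2$ is the defining Steinberg relation $(a_1,a_2)=0$ whenever $a_1+a_2=1$. For the inductive step with $n\geq 3$, put $t:=a_1+\dots+a_{n-1}=1-a_n$. If $t=0$ then $a_n=1$ and $(a_n)=0$, so the full symbol vanishes. If $t\neq 0$, then $\sum_{i<n}(a_i/t)=1$, and by induction $(a_1/t,\dots,a_{n-1}/t)=0$. Expanding
\[
(a_1,\dots,a_{n-1})=\bigl((a_1/t)\cdot t,\dots,(a_{n-1}/t)\cdot t\bigr)
\]
multilinearly writes it as a sum of $2^{n-1}$ symbols; the $S=\emptyset$ term vanishes by induction, and every other term has $t$ appearing in some slot. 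Multiplying the whole identity by $(a_n)$ on the right produces, in each nonzero term, a length-$n$ symbol containing both $t$ and $a_n$. Since $(x_1,\dots,x_n)$ is skew-symmetric in $K_n^M$ (an easy consequence of the Steinberg relation via $(a,-a)=0$), we can move $t$ and $a_n$ into adjacent slots up to sign; but $t+a_n=1$ forces $(t,a_n)=0$, so every such symbol vanishes.

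The main technical obstacle is really the inductive step: one must be comfortable with the multilinear expansion and, crucially, with the skew-symmetry of the Milnor symbol, which lets one exploit the Steinberg relation between two nonadjacent entries. The other small subtlety is the case analysis on whether $t$ (or in the $c=0$ setting, $b_n$) vanishes; this is what forces the argument to be organized as an induction rather than a single closed-form manipulation. Everything else is routine bookkeeping with multilinearity and the observation that $m\cdot[c]$ in $K_1^M(L)/m$ is zero, which is what bridges Milnor's integral statement to the stated mod $m$ form.
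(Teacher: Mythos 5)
Your proof is correct and follows exactly the intended route: the paper gives no argument of its own but simply cites Milnor's Lemma 1.3, which is precisely your integral sub-claim that $(a_1,\dots,a_n)=0$ when $\sum a_i=1$, proved there by the same induction using skew-symmetry and the Steinberg relation on $t+a_n=1$. Your multilinear expansion discarding the $m^{|S|}$-terms is the standard (and correct) bridge from Milnor's integral statement to the mod $m$ form stated here.
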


Let $A$ be a ring and let $A^*$ be the multiplicative group of units in $A$.
We define $K^m_n(A)$ as the quotient of $(A^*)^{\otimes n}$ by the subgroup generated by $a_1\otimes \dots \otimes a_n$ with $a_i+a_{i+1}=1$ for some $1\leq i \leq n-1$.
If $A$ is a field, then this definition coincides with the one above.
If $A\to B$ is a homomorphism of rings, then we obtain an induced homomorphism $K^M_n(A)\to K^M_n(B)$. 
In particular, if $A$ is an integral domain with fraction field $L$, then there is a natural map $\psi_L:K^M_n(A)\to K^M_n(L)$, and if $A$ is local with residue field $\kappa$, then there is a natural map $\psi_\kappa:K^M_n(A)\to K^M_n(\kappa)$.
The following result is known as specialization property in Milnor K-theory, c.f.\ \cite[Definition 2.1.4.c]{CT}.

\begin{lemma} \label{lem:spec-prop}
Let $A$ be a regular local ring with fraction field $L$ and residue field $\kappa$.
Then for any integer $m\geq 1$, we have
$$
\ker \left(\psi_L: K^M_n(A)/m \longrightarrow K^M_n(L)/m \right) \subseteq \ker \left( \psi_\kappa: K^M_n(A)/m \longrightarrow K^M_n(\kappa)/m \right) .
$$
\end{lemma}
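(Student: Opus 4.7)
The plan is to reduce the statement to the case where $A$ is a discrete valuation ring and then induct on the Krull dimension of $A$.

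\textbf{DVR case.} Suppose $A$ is a DVR with uniformizer $\pi$, fraction field $L$ and residue field $\kappa$. I would construct a specialization homomorphism $s_\pi \colon K^M_n(L) \to K^M_n(\kappa)$ such that $s_\pi \circ \psi_L = \psi_\kappa$; once this is done, reducing modulo $m$ immediately proves the inclusion $\ker(\psi_L/m) \subseteq \ker(\psi_\kappa/m)$. Following Milnor, there are two equivalent ways to build $s_\pi$: either as the unique multiplicative extension of the homomorphism $L^* \to \kappa^*$ that sends $\pi^e u$ to $\bar u$ for each $u \in A^*$ (well-definedness reduces to a short case analysis on the valuations $v(\ell_1), v(\ell_2)$ in a Steinberg relation $\ell_1 + \ell_2 = 1$: if both valuations are zero one uses Steinberg in $\kappa$; if exactly one is positive then the unit part of the other reduces to $1$, making the symbol die; if both are negative of the same value then $\bar{u}_{\ell_1} = -\bar{u}_{\ell_2}$ and one invokes the Milnor identity $\{x,-x\}=0$), or as $s_\pi(\alpha) = \partial(\{-\pi\} \cdot \alpha)$, where $\partial \colon K^M_{n+1}(L) \to K^M_n(\kappa)$ is the tame symbol. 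A direct inspection on symbols $(u_1,\dots,u_n)$ with $u_i \in A^*$ shows that $s_\pi$ takes $\psi_L(u_1,\dots,u_n)$ to $(\bar u_1,\dots,\bar u_n) = \psi_\kappa(u_1,\dots,u_n)$, proving $s_\pi \circ \psi_L = \psi_\kappa$.

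\textbf{Induction on dimension.} For $A$ regular local of dimension $d \geq 2$, choose a regular parameter $\pi \in \mathfrak m \setminus \mathfrak m^2$. Then $B := A_{(\pi)}$ is a DVR with fraction field $L$ and residue field $F := \Frac(A/\pi)$, while $A/\pi$ is a regular local ring of dimension $d-1$ with fraction field $F$ and residue field $\kappa$. The two compositions $A \to B \to F$ and $A \to A/\pi \to F$ coincide, so given $\alpha \in K^M_n(A)/m$ with $\psi_L(\alpha) = 0$, the DVR case applied to $B$ yields that the image of $\alpha$ in $K^M_n(F)/m$ vanishes, and the inductive hypothesis applied to $A/\pi$ then forces $\psi_\kappa(\alpha) = 0$, as required.

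\textbf{Main obstacle.} The heart of the argument is the construction of $s_\pi$ in the DVR case, in particular verifying that the naive ``reduce units, kill $\pi$'' rule respects the Steinberg relations; the dimension induction is a purely formal d\'evissage once this is in hand.
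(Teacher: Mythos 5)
Your proof is correct and takes essentially the same route as the paper: reduce to the discrete valuation ring case and then use Milnor's specialization homomorphism $s_\pi(\alpha)=\partial_\pi(\{\pi\}\cdot\psi_L(\alpha))$, which satisfies $s_\pi\circ\psi_L=\psi_\kappa$ and hence gives $\ker(\psi_L)\subseteq\ker(\psi_\kappa)$ directly. The only cosmetic difference is that the paper delegates the passage from an arbitrary regular local ring to the (complete) DVR case to \cite[Lemma 2.1.5(a)]{CT}, whereas you carry out that d\'evissage explicitly by inducting on dimension via a regular parameter $\pi$ and the pair $A_{(\pi)}$, $A/\pi$ --- which is exactly the content of the cited lemma, and your version even avoids the completion step since Milnor's construction needs no completeness.
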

\begin{proof}
By \cite[Lemma 2.1.5(a)]{CT}, it suffices to prove the lemma in the case where $A$ is a complete discrete valuation ring.
In this case, let $\pi\in A$ be a uniformizer.
This induces a residue homomorphism $\del_\pi:K^M_{n+1}(L)/m\to K^M_{n}(\kappa)/m$ (which in fact depends only on $A$ and not on the choice of the uniformizer $\pi$), such that for $\alpha \in K^M(A)/m$, 
$$
\psi_\kappa(\alpha)=\del_\pi((\pi)\otimes \psi_L(\alpha)) ,
$$
where $(\pi)\in K^M_1(L)/m=L^\ast/(L^\ast)^m$, see \cite[Lemma 2.1]{milnor}.
This immediately shows $\ker(\psi_L)\subset \ker(\psi_\kappa)$, as we want.
\end{proof}

\subsection{Galois cohomology and unramified cohomology}
Let $m\geq 2$ be a positive integer and let $L$ be a field in which $m$ is invertible.
We denote by $\mu_m\subset L$ the \'etale sheaf of  $m$-th roots of unity.
For an integer $j\geq 1$, we consider the twists $ \mu_m^{\otimes j}:=\mu_m\otimes\dots \otimes \mu_m $ (j-times) and define $\mu^{\otimes 0}:=\Z/m$ and $  \mu_m^{\otimes j}:=\Hom(\mu_m^{\otimes - j},\Z/m)$ for $j<0$.
For a field $L$ in which $m$ is invertible and which contains a primitive $m$-th root of unity, there is, for any integer $j$, an isomorphism $ \mu_m^{\otimes j}(\Spec L)\cong \Z/m$.

Let $L$ be a field and let $m$ be an integer that is invertible in $L$.
For any integer $j$, we denote by $H^i(L,\mu_m^{\otimes j})$ the Galois cohomology of the absolute Galois group of $L$ with the natural action on $\mu_m^{\otimes j}(\Spec L^{sep})$, where $L^{sep}$ denotes a separable closure of $L$.
Kummer theory induces a canonical isomorphism $H^1(L,\mu_m)\cong L^\ast/(L^\ast)^m=K^M_1(L)/m$.
By \cite[Theorem 3.1]{tate}, this induces via cup products a morphism of graded rings 
\begin{align} \label{eq:bass-tate}
K^M_\ast(L)/m\longrightarrow H^\ast(L,\mu_m^{\ast}):=\bigoplus_{i\geq 0} H^i(L,\mu_m^{\otimes i}) .
\end{align}  
(In fact, this map is an isomorphism by the Bloch--Kato conjecture, proven by Voevodsky, but we will not use this fact in this paper.) 
By slight abuse of notation, we denote the image of a class $(a_1,\dots ,a_n)\in K^M_n(L)/m$ in $H^n(L,\mu_m^{\otimes n})$ by the same symbol.

Let $A$ be a ring with $\frac{1}{m}\in A$ and let $L$ be a field.
Let $A\to L$ be a homomorphism of rings.
%
Since $H^i(L,\mu_m^{\otimes j})$ coincides with the \'etale cohomology of $\Spec L$ with values in the \'etale sheaf $\mu_m^{\otimes j}$, there is a natural pullback map $H^i_{\text{\'et}}(\Spec A,\mu_m^{\otimes j})\to H^i(L,\mu_m^{\otimes j})$.
This applies in particular to the case where $A$ with $\frac{1}{m}\in A$ is a local ring with residue field $L$, or an integral domain with fraction field $L$.
By \cite[Lemma 2.1.5(b) and \S 3.6]{CT}, the following specialization property (c.f.\ \cite[Definition 2.1.4.c]{CT}),  analogues to Lemma \ref{lem:spec-prop}, holds in \'etale cohomology.

\begin{lemma} \label{lem:spec-prop-etale}
Let $A$ be a regular local ring with fraction field $L$ and residue field $\kappa$.
Then for any integer $m\geq 1$ that is invertible in $\kappa$, we have
$$
\ker \left( H^i_{\text{\'et}}(\Spec A,\mu_m^{\otimes j}) \to H^i (L, \mu_m^{\otimes j}) \right)  \subseteq \ker \left( H^i_{\text{\'et}}(\Spec A, \mu_m^{\otimes j}) \to H^i(\kappa,\mu_m^{\otimes j} ) \right)  .
$$
\end{lemma}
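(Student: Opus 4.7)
My strategy would be to imitate the proof of Lemma \ref{lem:spec-prop} almost verbatim, replacing Milnor $K$-theory modulo $m$ by \'etale cohomology with $\mu_m^{\otimes j}$-coefficients. The role of the residue homomorphism $\del_\pi \colon K_{n+1}^M(L)/m \to K_n^M(\kappa)/m$ will be played by the classical \'etale residue map, and the formula relating $\psi_\kappa(\alpha)$ to the residue of $(\pi) \cup \psi_L(\alpha)$ has a direct analogue in \'etale cohomology.

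First I would reduce to the case where $A$ is a discrete valuation ring by induction on $d = \dim A$. For $d \geq 2$, I pick a regular parameter $t$ in the maximal ideal of $A$; then $A_{(t)}$ is a DVR with fraction field $L$ and residue field $\kappa' = \Frac(A/tA)$. If the DVR case is already known, any class $\alpha \in H^i_{\text{\'et}}(\Spec A, \mu_m^{\otimes j})$ whose image in $H^i(L, \mu_m^{\otimes j})$ vanishes has trivial image in $H^i(\kappa', \mu_m^{\otimes j})$ after restriction to $A_{(t)}$. This image factors through the pullback from $H^i_{\text{\'et}}(\Spec A/tA, \mu_m^{\otimes j})$, and since $A/tA$ is regular local of dimension $d - 1$, the inductive hypothesis then yields $\alpha_\kappa = 0$.

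For the DVR case, I would pass to the Henselization $\hat{A}$, which is faithfully flat over $A$ and has the same residue field $\kappa$, and use that $\hat{A}$ is Henselian with $m$ invertible to identify $H^i_{\text{\'et}}(\Spec \hat{A}, \mu_m^{\otimes j}) \cong H^i(\kappa, \mu_m^{\otimes j})$ via the standard base-change/Hensel isomorphism. Absolute purity applied to the closed point, combined with the localization sequence, yields the split short exact sequence
$$ 0 \to H^i(\kappa, \mu_m^{\otimes j}) \to H^i(\hat{L}, \mu_m^{\otimes j}) \xrightarrow{\del_\pi} H^{i-1}(\kappa, \mu_m^{\otimes j-1}) \to 0 $$
once a uniformizer $\pi$ is chosen; the left map is pullback and a splitting on the right is given by $\xi \mapsto (\pi) \cup \xi$. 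The identity $\psi_\kappa(\alpha) = \del_\pi((\pi) \cup \psi_L(\alpha))$ for $\alpha$ extending to $\hat{A}$ then drops out of this splitting, exactly as in Milnor $K$-theory, and the kernel inclusion is immediate: if $\psi_L(\alpha) = 0$, then $\psi_\kappa(\alpha) = \del_\pi(0) = 0$.

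The hardest part is the construction of the residue $\del_\pi$ and the verification that the displayed sequence is split by cup-product with $(\pi)$; this rests on absolute purity for regular schemes (Gabber, Thomason), which is standard but nontrivial. For the applications in the present paper one can often circumvent this by working with $A$ essentially of finite type over a field, where the splitting admits a more elementary description through direct Galois-cohomological computations.
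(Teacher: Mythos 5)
Your argument is correct and is essentially the proof the paper intends: the paper gives no proof of its own but simply cites \cite[Lemma 2.1.5(b) and \S 3.6]{CT}, and the cited argument is precisely your reduction from regular local rings to DVRs by induction on dimension followed by the purity/residue sequence (Theorem \ref{thm:ses}) for the (Henselized) DVR. Note that in the DVR case you could even skip the cup-product splitting: the left-exactness of the sequence in Theorem \ref{thm:ses} already shows $H^i_{\text{\'et}}(\Spec A,\mu_m^{\otimes j})\to H^i(L,\mu_m^{\otimes j})$ is injective, so the kernel in question is zero.
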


For any discrete valuation $\nu$ on a field $L$, such that $m$ is invertible in the residue field $\kappa(\nu)$, there is a residue map
$$
\del_\nu:H^i(L,\mu_m^{\otimes j})\longrightarrow H^{i-1}(\kappa(\nu),\mu_m^{\otimes (j-1)}) ,
$$
which (for $i=j$) is compatible with the aforementioned residue map in Milnor K-theory. 
This map is surjective and its kernel is described as follows, see e.g.\ \cite[(3.10)]{CT}.

\begin{theorem} \label{thm:ses}
Let $\nu$ be a discrete valuation on a field $L$ such that $m$ is invertible in the residue field $\kappa(\nu)$.
Let $\mathcal O_{\nu}\subset L$ be the associated discrete valuation ring. 
Then the natural sequence
\begin{align} \label{eq:ses}
0\longrightarrow H^i_{\text{\'et}}(\Spec \mathcal O_{\nu},\mu_m^{\otimes j})\longrightarrow H^i(L,\mu_m^{\otimes j})\stackrel{\del_\nu} \longrightarrow H^{i-1}(\kappa(\nu),\mu_m^{\otimes (j-1)})\longrightarrow 0 .
\end{align} 
is exact. 
\end{theorem}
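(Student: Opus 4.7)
The plan is to derive the sequence (\ref{eq:ses}) from the localization long exact sequence in étale cohomology combined with absolute cohomological purity, and then to split off the desired short exact sequence by proving surjectivity of $\partial_\nu$. Setting $R := \mathcal O_\nu$ and $\kappa := \kappa(\nu)$, the open–closed decomposition $\Spec R = \Spec L \sqcup \Spec \kappa$ gives rise to the standard localization long exact sequence
\begin{equation*}
\cdots \to H^i_\kappa(\Spec R, \mu_m^{\otimes j}) \to H^i_{\text{ét}}(\Spec R, \mu_m^{\otimes j}) \to H^i(L, \mu_m^{\otimes j}) \to H^{i+1}_\kappa(\Spec R, \mu_m^{\otimes j}) \to \cdots .
\end{equation*}
Since $\Spec\kappa \hookrightarrow \Spec R$ is a regular closed immersion of codimension one and $m$ is invertible in $\kappa$, absolute cohomological purity (which in this one-dimensional situation goes back to SGA 4) will furnish a canonical Gysin isomorphism $H^{i+1}_\kappa(\Spec R, \mu_m^{\otimes j}) \simeq H^{i-1}(\kappa, \mu_m^{\otimes(j-1)})$. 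By the very construction of $\partial_\nu$ (cf.\ \cite[\S 3]{CT}), the composite of this isomorphism with the connecting map in the localization sequence agrees up to sign with $\partial_\nu$, so the long exact sequence rewrites as
\begin{equation*}
\cdots \to H^{i-2}(\kappa, \mu_m^{\otimes(j-1)}) \to H^i_{\text{ét}}(\Spec R, \mu_m^{\otimes j}) \to H^i(L, \mu_m^{\otimes j}) \xrightarrow{\partial_\nu} H^{i-1}(\kappa, \mu_m^{\otimes(j-1)}) \to \cdots.
\end{equation*}
Obtaining (\ref{eq:ses}) therefore reduces to proving that $\partial_\nu$ is surjective in every degree.

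To do so, I first pass to the Henselization $R^h$ of $R$, with fraction field $L^h := \Frac(R^h)$. This is a Henselian DVR with residue field $\kappa$, and any uniformizer $\pi \in R$ is still a uniformizer of $R^h$. By the Hensel property for étale cohomology with torsion coefficients of order invertible in the residue field, the restriction $H^{i-1}_{\text{ét}}(\Spec R^h, \mu_m^{\otimes(j-1)}) \to H^{i-1}(\kappa, \mu_m^{\otimes(j-1)})$ is an isomorphism, so any class $\alpha \in H^{i-1}(\kappa, \mu_m^{\otimes(j-1)})$ lifts canonically to $\tilde\alpha \in H^{i-1}_{\text{ét}}(\Spec R^h, \mu_m^{\otimes(j-1)})$. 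The compatibility of the cup product with the residue map, applied to the product of the ``ramified'' class $(\pi) \in H^1(L^h,\mu_m)$ and the ``unramified'' class $\tilde\alpha|_{L^h}$, yields $\partial_{\nu^h}\bigl((\pi)\cup \tilde\alpha|_{L^h}\bigr) = \alpha$, establishing surjectivity of $\partial_{\nu^h}$.

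Finally, to transfer surjectivity from $L^h$ back to the original field $L$, I will use that $L^h$ is the filtered colimit of fraction fields $L'$ of étale local $R$-algebras $A$ with residue field $\kappa$. Since étale cohomology commutes with such filtered colimits, the class $(\pi)\cup\tilde\alpha|_{L^h}$ constructed above is already defined over some such $L'$ as a class $\gamma \in H^i(L', \mu_m^{\otimes j})$ whose residue at the unique extension $\nu'$ of $\nu$ to $L'$ equals $\alpha$. The projection formula for the residue map in an unramified finite extension with trivial residue field extension then gives $\partial_\nu(N_{L'/L}(\gamma)) = \alpha$, where $N_{L'/L}$ denotes the Galois corestriction. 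The main obstacle will be the bookkeeping in this final descent step: one has to verify the projection formula with the correct normalization and check that $\tilde\alpha$ is already defined on a sufficiently small étale neighborhood, so that the corestriction actually recovers $\alpha$ rather than a nontrivial multiple of it.
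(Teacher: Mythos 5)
The paper itself gives no proof of this statement (it is quoted from \cite[(3.10)]{CT}), and your overall strategy --- localization sequence plus one--dimensional absolute purity, followed by a reduction of the whole exactness assertion to the surjectivity of $\del_\nu$ in every degree --- is exactly the standard route behind that reference. That reduction is correct (surjectivity of $\del_\nu$ in degree $i$ kills the Gysin map into $H^{i+1}_{\text{\'et}}(\Spec \mathcal O_\nu,\mu_m^{\otimes j})$, which simultaneously gives exactness on the right in degree $i$ and injectivity on the left in degree $i+1$), and so is the surjectivity of $\del_{\nu^h}$ over the henselization via $(\pi)\cup\tilde\alpha$. (Minor point: attributing absolute purity for an arbitrary DVR to SGA\,4 is imprecise --- SGA\,4 treats smooth pairs; the DVR case is obtained by excision to the henselization together with the computation of the cohomology of a henselian discretely valued field, or from Gabber's absolute purity theorem.)

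The genuine gap is in the final descent from $L^h$ to $L$. A field $L'=\Frac(B_q)$ arising from a local \'etale $R$-algebra $(B,q)$ with $\kappa(q)=\kappa$ does \emph{not} in general carry a unique extension of $\nu$: the place $\nu'$ determined by $B_q$ has $e=f=1$, but $\nu$ typically has further extensions $w$ to $L'$ (already for $R=\Z_{(5)}$, $L'=\Q(i)$ there are two). The compatibility of corestriction with residues is
\begin{align*}
\del_\nu\bigl(N_{L'/L}(\gamma)\bigr)=\sum_{w\mid \nu}\operatorname{cores}_{\kappa(w)/\kappa(\nu)}\bigl(\del_w\gamma\bigr),
\end{align*}
summed over \emph{all} extensions $w$. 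Your class $\gamma=(\pi)\cup\tilde\alpha|_{L'}$ is controlled only at the distinguished place $\nu'$, where its residue is $\alpha$; at the other places $\tilde\alpha$ need not be unramified and $\pi$ need not be a uniformizer, so the remaining terms $\operatorname{cores}_{\kappa(w)/\kappa}(\del_w\gamma)$ are uncontrolled and you obtain $\alpha$ plus junk. The obstacle is therefore not a normalization or a ``nontrivial multiple'' but extra summands, and shrinking the \'etale neighborhood does not remove the other places. To close the argument one should instead show directly that the connecting maps $H^{i-1}(\kappa(\nu),\mu_m^{\otimes(j-1)})\to H^{i+1}_{\text{\'et}}(\Spec\mathcal O_\nu,\mu_m^{\otimes j})$ of your long exact sequence vanish; this is a statement about a morphism of complexes of sheaves supported on the closed point, hence can be checked after passage to the henselization, where it follows from the fact that the corresponding sequences are split by cup product with $(\pi)$. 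This comparison with the (split) henselian case is how the exact sequence for a non-henselian $\mathcal O_\nu$ is obtained in \cite[\S 3.3]{CT}.
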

 
Assume now that $L=k(X)$ is the function field of a $k$-variety $X$ and let $\Val(L/k)$ be the set of all valuations on $L$ that are induced by a prime divisor on some normal birational model of $X$.
The unramified $\mu_m^{\otimes j}$-cohomology of $X$ in degree $i$ is defined as
$$
H^i_{nr}(k(X)/k,\mu_m^{\otimes j}):=\{\alpha \in H^i(k(X),\mu_m^{\otimes j})\mid \del_\nu \alpha =0\ \ \forall \nu\in \Val(k(X)/k)\} .
$$ 
This subgroup of $H^i(k(X),\mu_m^{\otimes j})$ is a stable birational invariant of $X$, see \cite{CTO}.
If $k$ contains a primitive $m$-th root of unity, then $\mu_m^{\otimes j}\cong \mu_m^{\otimes 0}= \Z/m$ for all $j$ and so $H^i_{nr}(k(X)/k,\mu_m^{\otimes j})\cong H^i_{nr}(k(X)/k,\Z/m)$.

Let $\gamma\in H^i_{nr}(k(X)/k,\mu_m^{\otimes j})$ be unramified and let $E\subset X$ be a subvariety whose generic point $x$ lies in the smooth locus of $X$.
Then $\gamma$ lifts uniquely to a class in the cohomology of $\Spec \mathcal O_{X,x}$ (see  e.g.\ \cite[Theorem 3.8.2]{CT}) and so it can be restricted to the closed point to give a class in $H^i(\kappa(x),\mu_m^{\otimes j})=H^i(k(E),\mu_m^{\otimes j})$ that we denote by $\gamma|_x$ or $\gamma|_E$.

\section{Universal relations in Milnor K-theory modulo $m$}
\label{sec:universal-rel}

Fix a base field $k$ and a natural number $m\geq 2$.
For integers $n,s\geq 1$, let 
$$
R_{n,s}:=k[x_1,x_2,\dots ,x_n,y_1,\dots ,y_{s}]
$$ 
be the polynomial ring over $k$ in $n+s$ variables and let $L_{n,s}:=\Frac R_{n,s}$ be its field of fractions.

\begin{definition}  \label{def:universal-relation}
A universal relation in Milnor K-theory modulo $m$ over the field $k$ is an identity
\begin{align} \label{eq:universal-rel}
(x_1,\dots ,x_n)=\lambda\cdot (a_1,\dots ,a_n) \in K^M_n(L_{n,s})/m ,
\end{align}
for some nonzero polynomials $a_1,\dots ,a_n\in R_{n,s}$ 
 and $\lambda\in( \Z/m)^\ast$.
\end{definition} 

\subsection{General properties}

\begin{lemma} \label{lem:universal-rel} 
Let 
(\ref{eq:universal-rel})
be a universal relation in Milnor K-theory modulo $m$ over the field $k$.
Let $L$ be a field extension of $k$ and let $\phi:R_{n,s}\to L$ be a morphism of $k$-algebras  
such that $\phi(x_i)$ and $\phi(a_i)$ are invertible in $L$ 
for all $i=1,\dots ,n$.
Then,
\begin{align*}
(\phi(x_1),\dots ,\phi(x_n))=\lambda\cdot (\phi(a_1),\dots ,\phi(a_n)) \in 
K^M_n(L)/m .
\end{align*}
\end{lemma}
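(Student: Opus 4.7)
The plan is to push the universal identity from $K^M_n(L_{n,s})/m$ down to $K^M_n(L)/m$ by factoring $\phi$ through the residue field of an appropriate localization of $R_{n,s}$, and then invoking the specialization property of Lemma \ref{lem:spec-prop}. The map $\phi$ need not factor directly through $L_{n,s}$ (if it did, the result would be immediate by functoriality), so the role of Lemma \ref{lem:spec-prop} is precisely to bridge this gap.

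First, I would set $\mathfrak{p}:=\ker(\phi)\subset R_{n,s}$, which is a prime ideal since $L$ is a field. The hypothesis that $\phi(x_i)$ and $\phi(a_i)$ are units of $L$ (in particular nonzero) forces $x_i, a_i \notin \mathfrak{p}$ for all $i$. Hence the localization $A := (R_{n,s})_\mathfrak{p}$ is a regular local ring, as a localization of the polynomial ring $R_{n,s}$, whose fraction field is $L_{n,s}$ and in which each $x_i$ and each $a_i$ is a unit. Moreover, $\phi$ factors as $R_{n,s}\twoheadrightarrow R_{n,s}/\mathfrak{p} \hookrightarrow L$, and the second arrow extends uniquely to an embedding $\kappa:=\kappa(\mathfrak{p}) \hookrightarrow L$ of the residue field.

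Next, since all $x_i, a_i \in A^\ast$, the symbol $\alpha := (x_1,\dots,x_n) - \lambda \cdot (a_1,\dots,a_n)$ is a well-defined element of $K^M_n(A)/m$, and the universal relation says exactly that $\psi_{L_{n,s}}(\alpha)=0$. Applying Lemma \ref{lem:spec-prop}, I would conclude that $\psi_\kappa(\alpha)=0$ in $K^M_n(\kappa)/m$, i.e.\ the universal relation persists after reduction to the residue field. Pushing this identity forward along $\kappa \hookrightarrow L$ by functoriality of Milnor K-theory, and observing that the images of the residues $\bar{x}_i, \bar{a}_i \in \kappa$ in $L$ are nothing but $\phi(x_i), \phi(a_i)$ by construction of $\mathfrak{p}$, yields the desired identity in $K^M_n(L)/m$.

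I do not anticipate any serious obstacle. The only points worth verifying are that $A$ is regular (immediate from $R_{n,s}$ being a polynomial ring), that $x_i$ and $a_i$ truly lie in $A^\ast$ so that the lift $\alpha$ is well-defined (forced by $\mathfrak{p}=\ker\phi$ together with the invertibility hypothesis), and that the composition $\kappa \hookrightarrow L$ sends residues to the correct values of $\phi$ (built into the factorization of $\phi$). Everything else is a direct application of the specialization property already recorded in Lemma \ref{lem:spec-prop}.
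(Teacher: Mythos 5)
Your argument is correct and is essentially identical to the paper's proof: the prime $\mathfrak{p}=\ker\phi$ is exactly the scheme point $x=\varphi(\Spec L)\in\A^{n+s}_k$ used there, the localization $(R_{n,s})_\mathfrak{p}$ is its local ring, and both proofs conclude via Lemma \ref{lem:spec-prop} followed by functoriality along $\kappa(\mathfrak{p})\hookrightarrow L$.
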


\begin{proof} 
The morphism $\phi$ yields a morphism of schemes $\varphi:\Spec L\to \Spec R_{n,s}=\A_k^{n+s}$.
Let $x\in \A_k^{n+s}$ be the image of $\varphi$.
Then the field $L$ is an extension of the residue field $\kappa(x)$ of $x$ and so there is a natural homomorphism $K^M_n(\kappa(x))/m\to K^M_n(L)/m$.
In order to prove the lemma, we may thus without loss of generality assume $L=\kappa(x)$ and so $\varphi$ denotes the inclusion of the scheme-point $x\in \A_k^{n+s}$.  
 
Let $A$ be the local ring of $\A_k^{n+s}$ at $x$.
Since $\phi(x_i),\phi(a_i)\in L^\ast$, we get $x_i,a_i\in A^\ast$ and so
\begin{align}\label{eq:difference}
(x_1,\dots ,x_n)-\lambda (a_1,\dots ,a_n)\in K^M_n(A)/m .
\end{align}
This element lies in the kernel of $K^M_n(A)/m\to K^M_n(\Frac A)/m $, because $\Frac A=L_{n,s}$ and (\ref{eq:universal-rel}) is a universal relation. 
It thus follows from Lemma \ref{lem:spec-prop} that (\ref{eq:difference}) lies in the kernel of $K^M_n(A)/m\to K^M_n(L)/m $, because $L=\kappa(x)$ is the residue field of the local ring $A$ by assumption.
This concludes the lemma.
\end{proof}

\begin{remark}
The above lemma explains the terminology in Definition \ref{def:universal-relation}.
Indeed, let (\ref{eq:universal-rel}) be a universal relation, let $L$ be any field extension of $k$ and let $(\chi_1,\dots ,\chi_n)\in K^M_n(L)/m$ be any symbol.
By the universal property of polynomial rings, we may then define $\phi:R_{n,s}\to L$  by setting $\phi(x_i)=\chi_i$ for $i=1,\dots ,n$ and $\phi(y_j)\in L$ arbitrary such that $\phi(a_i)\neq 0$ for all $i=1,\dots ,n$.
By  Lemma \ref{lem:universal-rel}, we get a relation
\begin{align*}
(\chi_1,\dots ,\chi_n)=\lambda\cdot (\phi(a_1),\dots ,\phi(a_n)) \in 
K^M_n(L)/m 
\end{align*}
in the Milnor K-theory of $L$, which involves the symbol $(\chi_1,\dots ,\chi_n)$.  
\end{remark}

The following proposition shows that universal relations allow us  to construct varieties whose function fields kill a given symbol in Milnor K-theory modulo $m$.

\begin{proposition} \label{prop:univ-rel}
Let (\ref{eq:universal-rel}) be a universal relation in Milnor K-theory modulo $m$ over the field $k$ in degree $n\geq 1$.
Let $L$ be a field extension of $k$ and let $\chi_1,\dots ,\chi_n\in L^\ast$.
Let  $s'$ be a positive integer and let $\phi:R_{n,s}\to L[y_1,\dots ,y_{s'}]$ a homomorphism of $k$-algebras with $\phi(x_i)=\chi_i$ for all $i=1,\dots ,n$.
Let $c\in L[y_1,\dots ,y_{s'}]$ be such that
$$
F:=c^m-\sum_{i=1}^n \phi(a_i) \in L[y_1,\dots ,y_{s'}]
$$
is irreducible and let $W$ be a projective model of $\{F=0\}\subset \A_L^{s'}$.
Assume that for each $i$, $\phi(a_i)$ is not a multiple of $F$, i.e.\ the restriction of $\phi(a_i)$ to $W$ is nonzero. 
Then
\begin{enumerate}[(a)]
\item $
(\chi_1,\dots ,\chi_n)
 \in \ker \left(K^M_n(L)/m\longrightarrow K^M_n(L(W))/m \right) .
$ \label{item:univ-rel:W}
\item  \label{item:univ-rel:Y}
Let $Y$ be a variety over $L$ and let $\iota:Y\to W$ be a morphism of $L$-varieties such that the image $\iota(\eta_Y)$ of the generic point of $Y$ lies in the regular locus of $W$.
Then
$$
(\chi_1,\dots ,\chi_n) \in \ker \left(K^M_n(L)/m\longrightarrow K^M_n(L(Y))/m \right) .
$$
\end{enumerate}
\end{proposition}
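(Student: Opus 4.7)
The plan is to first establish part (\ref{item:univ-rel:W}) by evaluating the universal relation (\ref{eq:universal-rel}) at the generic point of $W$, and then to deduce part (\ref{item:univ-rel:Y}) from it via the specialization property of Milnor K-theory (Lemma \ref{lem:spec-prop}).

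For (\ref{item:univ-rel:W}), I would compose $\phi$ with the canonical map $L[y_1,\dots,y_{s'}]\to L(W)=\Frac(L[y_1,\dots,y_{s'}]/(F))$ to obtain a $k$-algebra homomorphism $\tilde\phi:R_{n,s}\to L(W)$. Its values on the $x_i$ are the units $\tilde\phi(x_i)=\chi_i\in L^\ast\subset L(W)^\ast$, and its values $\tilde\phi(a_i)$ are units because by hypothesis $\phi(a_i)$ is not a multiple of the irreducible polynomial $F$, so it has nonzero image in $L[y_1,\dots,y_{s'}]/(F)\subset L(W)$. Lemma \ref{lem:universal-rel} then transports the universal relation (\ref{eq:universal-rel}) to
$$(\chi_1,\dots,\chi_n)=\lambda\cdot(\tilde\phi(a_1),\dots,\tilde\phi(a_n))\in K^M_n(L(W))/m.$$
On the other hand, the defining equation $F=c^m-\sum_{i=1}^n\phi(a_i)$ gives $\sum_{i=1}^n\tilde\phi(a_i)=\tilde c^{\,m}$ in $L(W)$, where $\tilde c$ denotes the image of $c$. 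Lemma \ref{lem:(b1,...,bn)=0} therefore yields $(\tilde\phi(a_1),\dots,\tilde\phi(a_n))=0$ in $K^M_n(L(W))/m$, and combining the two identities proves (\ref{item:univ-rel:W}).

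For (\ref{item:univ-rel:Y}), set $w:=\iota(\eta_Y)\in W$, a regular point by hypothesis, and let $A:=\OO_{W,w}$, a regular local ring with fraction field $L(W)$ and residue field $\kappa(w)$. Since each $\chi_i$ lies in $L^\ast\subset A^\ast$, the tensor $\chi_1\otimes\cdots\otimes\chi_n$ defines a class $(\chi_1,\dots,\chi_n)\in K^M_n(A)/m$ whose image in $K^M_n(L(W))/m$ vanishes by (\ref{item:univ-rel:W}). Lemma \ref{lem:spec-prop} then forces its image in $K^M_n(\kappa(w))/m$ to vanish as well. The morphism $\iota$ induces a local homomorphism $\OO_{W,w}\to\OO_{Y,\eta_Y}=L(Y)$ and hence a field extension $\kappa(w)\hookrightarrow L(Y)$; pushing the vanishing statement along this extension yields $(\chi_1,\dots,\chi_n)=0$ in $K^M_n(L(Y))/m$, as required.

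The main obstacle is that $\iota$ is not assumed dominant, so there is no direct restriction map $K^M_n(L(W))/m\to K^M_n(L(Y))/m$; this is precisely why one has to lift $(\chi_1,\dots,\chi_n)$ to $K^M_n(\OO_{W,w})/m$ and invoke Lemma \ref{lem:spec-prop} to detour through the residue field $\kappa(w)$, and this is also where the regularity hypothesis on $w\in W$ is indispensable. Once this specialization step is in place, the remainder of the argument is a formal combination of Lemmas \ref{lem:universal-rel}, \ref{lem:(b1,...,bn)=0}, and \ref{lem:spec-prop}.
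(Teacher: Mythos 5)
Your proposal is correct and follows essentially the same route as the paper: part (a) via Lemma \ref{lem:universal-rel} combined with Lemma \ref{lem:(b1,...,bn)=0} applied to the $m$-th power $\sum_i \phi(a_i) = c^m - F \equiv c^m$ on $W$, and part (b) by lifting the symbol to $K^M_n(\mathcal O_{W,w})/m$ at the regular point $w=\iota(\eta_Y)$ and applying the specialization property of Lemma \ref{lem:spec-prop} before extending from $\kappa(w)$ to $L(Y)$. Your closing remark correctly identifies the key subtlety (non-dominance of $\iota$) that the specialization detour is designed to handle.
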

\begin{proof}
Since $F$ is irreducible, $W$ is integral and so it is regular at the generic point.
In particular, item (\ref{item:univ-rel:W}) is a special case of (\ref{item:univ-rel:Y}).
Nonetheless, we will prove (\ref{item:univ-rel:W}) first.
For this, we denote by $\overline {\phi(a_i)}$ the image of $a_i\in R_{n,s}$ in $L(W)$.
By our assumptions, $\overline {\phi(a_i)}\neq 0$ for all $i$.
Hence, 
$$
(  {\phi(x_1)},\dots , {\phi(x_n)})=\lambda\cdot (\overline {\phi(a_1)},\dots ,\overline{\phi(a_n)})\in K^M_n(L(W))/m
$$
by Lemma \ref{lem:universal-rel}, and so this class vanishes by Lemma \ref{lem:(b1,...,bn)=0} because $\sum_i \overline {\phi(a_i)}$ is an $m$-th power in $L(W)$ by the definition of $F$.
This proves item (\ref{item:univ-rel:W}) because $\phi(x_i)=\chi_i$ for all $i$.

To prove item (\ref{item:univ-rel:Y}), let $w=\iota(\eta_Y)\in W$ be the image of the generic point of $Y$.
Let $A=\mathcal O_{W,w}$ be the local ring of $W$ at $w$.
By assumption, $A$ is a regular local ring.
Since $\chi_i\in L^\ast \subset A^\ast$ for all $i$, 
$$
(\chi_1,\dots ,\chi_n) \in \ker(K^M_n(A)/m\longrightarrow K^M_n(L(W))/m)
$$
by item (\ref{item:univ-rel:W}) proven above.
Applying Lemma \ref{lem:spec-prop}, we then find
$$
(\chi_1,\dots ,\chi_n) \in \ker(K^M_n(A)/m\longrightarrow K^M_n(\kappa(w))/m) .
$$
Item (\ref{item:univ-rel:Y}) stated in the proposition follows from this because $L(Y)$ is a field extension of $\kappa(w)$ and so the natural map $K^M_n(L)/m\longrightarrow K^M_n(L(Y))/m$ factors through $K^M_n(\kappa(w))/m$.
This concludes the proof of the proposition.
\end{proof}

\subsection{Examples}
The simplest example of a universal relation modulo $m$ is given by 
$$
(x_1)=(x_1y_1^m)\in K_1^M(L_{1,1})/m.
$$
The next lemma allows to produce universal relations in Milnor K-theory mod $m$ in arbitrary degree by starting with a single relation in low degree.  

\begin{lemma} \label{lem:propagate-univ-rel}
Let $(x_1,\dots ,x_n)=\lambda\cdot(a_1,\dots ,a_n) \in K^M_n(L_{n,s})/m$ be a universal relation in degree $n$.
Then
$$
\left( x_1,\dots ,x_{n},x_{n+1}\right) =\lambda\cdot\left( a_1,\dots ,a_n,x_{n+1}\left(y_{s+1}^m- \sum_{i=1}^na'_i\right) \right) \in K_{n+1}^M(L_{n+1,2s+1})/m,
$$
is a universal relation in degree $n+1$, where $a'_i:=a_i(x_1,\dots ,x_n,y_{s+2},\dots ,y_{2s+1})$.
\end{lemma}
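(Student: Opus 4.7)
The plan is to apply the hypothesised degree-$n$ universal relation inside $L_{n+1,2s+1}$ in two different ways, and then combine the resulting identities by a bilinear manipulation. First, the natural inclusion $R_{n,s}\hookrightarrow R_{n+1,2s+1}$ gives, via Lemma~\ref{lem:universal-rel},
\[
(x_1,\dots,x_n)=\lambda\cdot(a_1,\dots,a_n)\in K^M_n(L_{n+1,2s+1})/m .
\]
Second, the shift $R_{n,s}\hookrightarrow R_{n+1,2s+1}$ sending $x_i\mapsto x_i$ and $y_j\mapsto y_{s+1+j}$, under which $a_i\mapsto a'_i$, yields
\[
(x_1,\dots,x_n)=\lambda\cdot(a'_1,\dots,a'_n)\in K^M_n(L_{n+1,2s+1})/m .
\]
In both cases the images of $x_i$ and of $a_i$ (respectively $a'_i$) are nonzero polynomials and hence units in $L_{n+1,2s+1}$, so Lemma~\ref{lem:universal-rel} indeed applies.

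The decisive observation is that
\[
a'_1+\cdots+a'_n+\Bigl(y_{s+1}^m-\sum_{i=1}^n a'_i\Bigr)=y_{s+1}^m
\]
is an $m$-th power. By Lemma~\ref{lem:(b1,...,bn)=0} this forces
\[
\bigl(a'_1,\dots,a'_n,\,y_{s+1}^m-\textstyle\sum_i a'_i\bigr)=0\in K^M_{n+1}(L_{n+1,2s+1})/m ,
\]
the last entry being a unit because $y_{s+1}$ does not appear in any $a'_i$, so the monomial $y_{s+1}^m$ cannot be cancelled. Combining with the shifted relation above gives $(x_1,\dots,x_n,\,y_{s+1}^m-\sum_i a'_i)=0$. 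Expanding by bilinearity in the $(n+1)$-st slot,
\[
\bigl(x_1,\dots,x_n,\,x_{n+1}(y_{s+1}^m-\textstyle\sum_i a'_i)\bigr)=(x_1,\dots,x_n,x_{n+1})+\bigl(x_1,\dots,x_n,\,y_{s+1}^m-\textstyle\sum_i a'_i\bigr) ,
\]
the second summand vanishes, while the left-hand side equals $\lambda\cdot(a_1,\dots,a_n,\,x_{n+1}(y_{s+1}^m-\sum_i a'_i))$ by multiplying the first (unshifted) form of the relation in the last slot by $x_{n+1}(y_{s+1}^m-\sum_i a'_i)$. Rearranging produces the claimed identity.

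The only creative step is spotting the auxiliary factor $y_{s+1}^m-\sum_i a'_i$: the fresh variable $y_{s+1}$ serves as the $m$-th power witness after $\sum_i a'_i$ is adjoined, and the duplicated block $y_{s+2},\dots,y_{2s+1}$ is needed so that the $a'_i$ involve variables disjoint from $y_{s+1}$, guaranteeing both that $y_{s+1}^m-\sum_i a'_i$ is a nonzero polynomial and that the required $m$-th power identity is non-tautological. After this choice, the argument reduces to two applications of Lemma~\ref{lem:universal-rel}, one application of Lemma~\ref{lem:(b1,...,bn)=0}, and a single bilinear expansion, so I anticipate no further obstacle.
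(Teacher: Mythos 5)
Your proof is correct and follows essentially the same route as the paper's: both reduce the claim, via multiplicativity in the last slot and bilinearity, to the vanishing of $\bigl(x_1,\dots,x_n,\,y_{s+1}^m-\sum_i a'_i\bigr)$, which is then established by substituting the relabelled relation $(x_1,\dots,x_n)=\lambda\cdot(a'_1,\dots,a'_n)$ and applying Lemma~\ref{lem:(b1,...,bn)=0} to the $m$-th power $y_{s+1}^m$. The only difference is cosmetic (you build the identity forwards where the paper reduces it backwards), and your extra remarks on why $y_{s+1}^m-\sum_i a'_i$ is a nonzero polynomial are a welcome precision.
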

\begin{proof}
Since $(x_1,\dots ,x_n)=\lambda\cdot (a_1,\dots ,a_n) \in K^M_n(L_{n,s})/m$, we have  
$$
\left( x_1,\dots ,x_n,x_{n+1}\left(y_{s+1}^m- \sum_{i=1}^na'_i \right) \right) =\lambda\cdot 
\left( a_1,\dots ,a_n,x_{n+1} \left(y_{s+1}^m- \sum_{i=1}^na'_i \right) \right)
$$
in $ K^M_n(L_{n+1,2s+1})/m$.
The claim in the lemma is thus equivalent to
\begin{align} \label{eq:lem:propagate}
 \left( x_1,\dots ,x_n,y_{s+1}^m-\sum_{i=1}^na'_i \right) =0\in  K_n^M(L_{n+1,2s+1})/m .
\end{align}
Relabelling the $y$-coordinates in the universal relation $(x_1,\dots ,x_n)=\lambda\cdot (a_1,\dots ,a_n)$ shows by Lemma \ref{lem:universal-rel} that $(x_1,\dots ,x_n)=\lambda\cdot (a'_1,\dots ,a'_n)\in  K_n^M(L_{n+1,2s+1})/m$ and so (\ref{eq:lem:propagate}) is equivalent to
$$
\lambda\cdot  \left( a'_1,\dots ,a'_n,y_{s+1}^m-\sum_{i=1}^na'_i \right) =0\in  K_n^M(L_{n+1,2s+1})/m ,
$$
which holds by Lemma \ref{lem:(b1,...,bn)=0}.
This concludes the proof of the lemma.
\end{proof}

To illustrate the above result, start with the trivial relation
$
(x_1)=(x_1y_1^m) 
$
in degree one.
Applying the lemma, we arrive at the relation
$$
(x_1,x_2)=(x_1y_1^m,x_2y_2^m-x_1x_2y_3^m) \in K_2^M(L_{2,3})/m 
$$
in degree two.
Applying the lemma once again, we get the universal relation
$$
(x_1,x_2,x_3)=(x_1y_1^m, x_2y_2^m-x_1x_2y_3^m, x_3y_4^m  - x_1x_3y_5^m  -x_2x_3y_6^m +x_1x_2x_3y_7^m ) 
$$
in $K_3^M(L_{3,7})/m$.
Repeating this process inductively, we are led to the universal relation in degree $n$ from Proposition \ref{prop:x1,...,xn} below.

\section{Fermat--Pfister forms}
\label{sec:fermat-pfister}
 
Let $k$ be a field and $m\geq 2$ an integer. 
For $n\geq 1$, we define the $n$-th Fermat--Pfister form of degree $m$ (and with coefficients in the polynomial ring $k[x_1,\dots ,x_n]$) as 
\begin{align} \label{def:Pfister}
\Pf_{m,n}(y_0,\dots ,y_{2^n-1}):= \sum_{\epsilon\in \{0,1\}^n}(-x_1)^{\epsilon_1}(-x_2)^{\epsilon_2}\dots (-x_n)^{\epsilon_n} \cdot y_{\phi(\epsilon)}^m,
\end{align}
where $\phi:\{0,1\}^n\to \{0,1,\dots ,2^n-1\}$ denotes the  bijection given by
$$
\phi(\epsilon)=  \sum_{i=1}^n \epsilon_i \cdot 2^{i-1} .
$$
This generalizes the famous quadratic forms of Pfister \cite{pfister} to higher degrees.\footnote{After this paper was submitted, it was brought to my attention that such generalizations of Pfister forms may already be found in a paper by Krashen and Matzri \cite[Proposition 1.9]{KM15}.}
We denote the coefficient in front of $y_j$ by $c_j$ 
and get
\begin{align} \label{def:ci}
\Pf_{m,n}(y_0,\dots ,y_{2^{n}-1})=\sum_{j=0}^{2^{n}-1} c_jy_j^m.
\end{align}
By definition, $c_0=1$, $c_1=-x_1$ and $c_{2^n-1}=(-1)^nx_1\cdots x_n$.

For $n\geq 1$, we have 
\begin{align*} 
\Pf_{m,n}(y_0,\dots ,y_{2^n-1})=\Pf_{m,n-1}(y_0,\dots ,y_{2^{n-1}-1})-
x_n\cdot \Pf_{m,n-1}(y_{2^{n-1}},\dots ,y_{2^{n}-1}) ,
\end{align*}
where we set $\Pf_{m,0}(y_0):=y_0^m$.
Inductively, this yields 
\begin{align}\label{eq:f_n:reciprocal}
\Pf_{m,n}(y_0,\dots ,y_{2^n-1})=y_0^m - \sum_{i=1}^n a_i , 
\end{align}
where 
\begin{align} \label{def:ai}
a_i:=x_i \cdot \Pf_{m,i-1}(y_{2^{i-1}},\dots ,y_{2^{i}-1}).
\end{align}

\begin{proposition} \label{prop:x1,...,xn}
Let $k$ be a field and let $a_i\in  k[x_1,\dots ,x_i,y_1,\dots ,y_{2^i-1}]$
 be as in (\ref{def:ai}).  
Then, 
\begin{align*} 
(x_1,\dots ,x_n)=(a_1,\dots ,a_n) \in K_n^M(L_{n,2^{n}-1})/m ,
\end{align*} 
is a universal relation in Milnor K-theory modulo $m$ over $k$.
\end{proposition}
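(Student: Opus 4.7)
The plan is to prove Proposition \ref{prop:x1,...,xn} by induction on $n$, using Lemma \ref{lem:propagate-univ-rel} at each step. The base case and the structure of how $y$-variables get indexed at each step match up exactly with the inductive unfolding of $\Pf_{m,n}$ given by (\ref{eq:f_n:reciprocal}), so the proof should be routine once the bookkeeping is set up.

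For the base case $n=1$, note that $a_1 = x_1 \cdot \Pf_{m,0}(y_1) = x_1 y_1^m$, and $(x_1) = (x_1 y_1^m) \in K_1^M(L_{1,1})/m$ holds trivially since $(y_1^m) = m \cdot (y_1) = 0$ modulo $m$; this is a universal relation with $\lambda = 1$ and $s = 1 = 2^1 - 1$.

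For the inductive step, assume the relation $(x_1,\dots,x_n) = (a_1,\dots,a_n) \in K_n^M(L_{n,2^n-1})/m$ holds as a universal relation, with $\lambda = 1$ and $s = 2^n - 1$. Lemma \ref{lem:propagate-univ-rel} produces a universal relation in $K_{n+1}^M(L_{n+1, 2s+1})/m = K_{n+1}^M(L_{n+1,2^{n+1}-1})/m$ (note that $2s+1 = 2^{n+1}-1$, matching the count in the statement) of the form
$$
(x_1,\dots,x_{n+1}) = \left( a_1, \dots, a_n,\ x_{n+1}\Bigl(y_{s+1}^m - \sum_{i=1}^n a_i'\Bigr) \right),
$$
where, as prescribed by the lemma, $a_i'$ is obtained from $a_i$ by replacing the variables $y_1,\dots,y_s$ by $y_{s+2},\dots,y_{2s+1}$, i.e.\ by shifting each $y$-index by $s+1 = 2^n$. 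I would then verify that this new $(n+1)$-st entry is exactly $a_{n+1} = x_{n+1} \cdot \Pf_{m,n}(y_{2^n},\dots,y_{2^{n+1}-1})$. Indeed, by (\ref{eq:f_n:reciprocal}) applied to the shifted variables $y_{2^n}, y_{2^n+1}, \dots, y_{2^{n+1}-1}$, one has
$$
\Pf_{m,n}(y_{2^n},\dots,y_{2^{n+1}-1}) = y_{2^n}^m - \sum_{i=1}^n x_i \cdot \Pf_{m,i-1}(y_{2^{i-1}+2^n},\dots,y_{2^i-1+2^n}),
$$
and the summands on the right-hand side are precisely the $a_i'$ by the definition (\ref{def:ai}) of $a_i$ and the prescribed $y$-index shift by $2^n$. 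This shows $x_{n+1}(y_{s+1}^m - \sum_i a_i') = a_{n+1}$, closing the induction.

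The only real obstacle is the index bookkeeping: one must check that the shift $y_j \mapsto y_{j+2^n}$ dictated by Lemma \ref{lem:propagate-univ-rel} (which sends $y_1,\dots,y_{2^n-1}$ to $y_{2^n+1},\dots,y_{2^{n+1}-1}$, leaving $y_{2^n}$ to play the role of the new $y_0$-variable in $\Pf_{m,n}$) is exactly the substitution that realizes the recursive identity for the Fermat--Pfister form. This is precisely the content of the expansion (\ref{eq:f_n:reciprocal}) combined with (\ref{def:ai}), so no further input is needed.
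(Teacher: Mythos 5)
Your proof is correct and follows essentially the same route as the paper: induction on $n$ via Lemma \ref{lem:propagate-univ-rel}, with the key observation that the shifted terms $a_i'$ reassemble, through the recursion (\ref{eq:f_n:reciprocal}), into $\Pf_{m,n}(y_{2^n},\dots,y_{2^{n+1}-1})$, so that the new entry is exactly $a_{n+1}$. The index bookkeeping you carry out matches the paper's argument.
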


\begin{proof}
We aim to prove the proposition by induction on $n$.
For $n=1$, the proposition is saying that $(x_1)=(x_1y_1^m)$, which is clear.
We now assume that the proposition is proven for some $n\geq 1$ and we aim to prove it for $n+1$.
Applying Lemma \ref{lem:propagate-univ-rel} to the given universal relation in degree $n$, we obtain 
\begin{align} \label{eq:lem:x1,...,xn}
(x_1,\dots ,x_n,x_{n+1})=\left( a_1,\dots ,a_n,x_{n+1}\left( y_{2^{n}}^m-\sum_{i=1}^n a_i'\right)  \right) ,
\end{align}
in $K_n^M(L_{n+1,2^{n+1}-1})/m$,
where 
$$
a'_i=a_i(x_1,\dots,x_n,y_{2^n+1},\dots ,y_{2^{n+1}-1})=x_i \cdot \Pf_{m,i-1}(y_{2^n+2^{i-1}},\dots ,y_{2^n+2^{i}-1}) .
$$ 
The recursive relation (\ref{eq:f_n:reciprocal}) implies
$$
 y_{2^{n}}^m-\sum_{i=1}^n a_i' = \Pf_{m,n}(y_{2^{n}},y_{2^n+1},\dots ,y_{2^{n+1}-1})
$$
and so 
$$
x_{n+1}\left( y_{2^{n}}^m-\sum_{i=1}^n a_i'\right)=x_{n+1}\Pf_{m,n}(y_{2^{n}},y_{2^n+1},\dots ,y_{2^{n+1}-1}) =a_{n+1}
$$
by (\ref{def:ai}). 
Hence, (\ref{eq:lem:x1,...,xn}) simplifies to 
$$
(x_1,\dots ,x_n,x_{n+1})=\left( a_1,\dots ,a_n,a_{n+1}   \right)\in K_n^M(L_{n+1,2^{n+1}-1})/m,
$$
as we want.
This concludes the proposition. 
\end{proof}


As an interesting example, the above discussion 
allows us to prove the following result, c.f.\ the aforementioned paper  \cite[Proposition 1.9]{KM15} by Krashen and Matzri for a similar result, proven with different methods. 

\begin{corollary}\label{cor:Pfister-forms-intro} 
Let $n\geq 2$ be an integer and let $\chi_1,\dots ,\chi_n\in L$ be nonzero elements of a field $L$.
Consider the hypersurface $X_{\chi_1,\dots ,\chi_n}\subset \CP^{2^{n}-1}_L$ of degree $m$, given by 
$$
 \sum_{\epsilon\in \{0,1\}^n}(-\chi_1)^{\epsilon_1}(-\chi_2)^{\epsilon_2}\dots (-\chi_n)^{\epsilon_n} \cdot y_{\phi(\epsilon)}^m=0 ,
$$
where  
$
\phi(\epsilon)=  \sum_{i=1}^n \epsilon_i \cdot 2^{i-1} 
$. 
If $X_{\chi_1,\dots ,\chi_n}$ is integral (e.g.\ if $\frac{1}{m}\in L$), then
$$
(\chi_1,\dots ,\chi_n)\in \ker(K_n^M(L)/m\longrightarrow K^M_n(L(X_{\chi_1,\dots ,\chi_n}))/m) .
$$
\end{corollary}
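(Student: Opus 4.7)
The plan is to apply Proposition \ref{prop:univ-rel}(a) to the universal relation $(x_1,\dots,x_n)=(a_1,\dots,a_n)$ from Proposition \ref{prop:x1,...,xn}, with the specialization $\phi(x_i)=\chi_i$ and $\phi(y_j)=z_j$, where $z_1,\dots,z_{2^n-1}$ are new indeterminates over $L$, and with the choice $c=1\in L[z_1,\dots,z_{2^n-1}]$. The projective model $W$ required by the proposition will be the Fermat--Pfister hypersurface $X_{\chi_1,\dots,\chi_n}\subset \PP^{2^n-1}_L$ itself.

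The first step is to identify the polynomial $F=c^m-\sum_{i=1}^n \phi(a_i)$ produced by this setup with the defining equation of $X_{\chi_1,\dots,\chi_n}$ in the affine chart $\{z_0\neq 0\}$. Setting $y_0=1$ (i.e.\ $z_0=1$ after homogenization) in the recursive identity (\ref{eq:f_n:reciprocal}) yields $\Pf_{m,n}(1,z_1,\dots,z_{2^n-1})=1-\sum_{i=1}^n a_i\big|_{y_0=1}$, and specializing $x_j\to \chi_j$ gives exactly $F=1-\sum_{i=1}^n \phi(a_i)$. Because $\Pf_{m,n}$ contains the monomial $y_0^m$ with coefficient $1$, the hypersurface $X_{\chi_1,\dots,\chi_n}$ is not contained in the hyperplane $\{z_0=0\}$, so this affine chart is Zariski-dense. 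Hence $W=X_{\chi_1,\dots,\chi_n}$ is a projective model of the affine hypersurface $\{F=0\}\subset \A^{2^n-1}_L$.

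Next I would verify the two remaining hypotheses of Proposition \ref{prop:univ-rel}. Irreducibility of $F$ follows from the assumed integrality of $X_{\chi_1,\dots,\chi_n}$ together with density of the affine chart. For the non-vanishing of $\phi(a_i)$ on $W$, recall from (\ref{def:ai}) that $\phi(a_i)=\chi_i\cdot\Pf_{m,i-1}(z_{2^{i-1}},\dots,z_{2^i-1})\big|_{x_j\to\chi_j}$, which is a nonzero homogeneous polynomial of degree $m$ in the $z$-variables (it contains the monomial $\chi_i\, z_{2^{i-1}}^m$). Since $\deg\phi(a_i)=m=\deg F$ while $F$ has nonzero constant term, $\phi(a_i)$ cannot be a polynomial multiple of $F$, so its image in $L[z_1,\dots,z_{2^n-1}]/(F)=L[W\cap\{z_0\neq 0\}]$ is nonzero.

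Applying Proposition \ref{prop:univ-rel}(a) then yields
$$(\chi_1,\dots,\chi_n)\in \ker\bigl(K_n^M(L)/m\longrightarrow K_n^M(L(X_{\chi_1,\dots,\chi_n}))/m\bigr),$$
which is the desired conclusion. The main work here is purely bookkeeping: one has to carefully track how the abstract universal relation in $L_{n,2^n-1}$ translates under $\phi$ into the affine equation of $X_{\chi_1,\dots,\chi_n}$ via the recursion (\ref{eq:f_n:reciprocal}). Once this dictionary is in place, the argument is essentially formal and reduces to invoking the machinery already developed in Sections \ref{sec:universal-rel} and \ref{sec:fermat-pfister}.
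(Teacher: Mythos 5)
Your proof is correct and follows essentially the same route as the paper: specialize the universal relation of Proposition \ref{prop:x1,...,xn} via Proposition \ref{prop:univ-rel}(\ref{item:univ-rel:W}), identifying $X_{\chi_1,\dots,\chi_n}$ with the projective model $W$ through the recursion (\ref{eq:f_n:reciprocal}). The only (harmless) difference is that you dehomogenize at $y_0=1$ and take $c=1$, whereas the paper keeps $c=y_0$ as an extra variable; your verification that $\phi(a_i)$ is not a multiple of $F$ and that irreducibility passes to the affine chart is sound.
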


\begin{proof} 
Let $k$ be the prime field of $L$ and consider the polynomial ring 
$$
R_{n,2^{n}-1}=k[x_1,\dots ,x_n,y_1,\dots ,y_{2^n-1}]
$$ 
from Section \ref{sec:universal-rel}.
Let $\phi:R_{n,2^{n}-1}\to L[y_0,y_1,\dots ,y_{2^n-1}]$ be the morphism of $k$-algebras, given by $\phi(x_i)=\chi_i$ and $\phi(y_j)=y_j$ for all $i$ and $j$.
Let further $a_i\in R_{n,2^n-1}$ be as in (\ref{def:ai}), so that the universal relation $(x_1,\dots ,x_n)=(a_1,\dots ,a_n)\in K^M_n(L_{n,2^n-1})/m$ holds by Proposition \ref{prop:x1,...,xn}.
By (\ref{eq:f_n:reciprocal}), the hypersurface $X_{\chi_1,\dots ,\chi_n}$ from Corollary \ref{cor:Pfister-forms-intro} is given by $F=0$ where
$$
F:=y_0^m-\sum_{i=1}^n \phi(a_i).
$$
By assumption, $X_{\chi_1,\dots ,\chi_n}$ is integral and so $F$ is irreducible.
Since $\phi(x_i)=\chi_i\in L^\ast$ and $\phi(a_i)\not \equiv 0 \mod F$, it thus follows from item (\ref{item:univ-rel:W}) in Proposition \ref{prop:univ-rel} that
$$
(\chi_1,\dots ,\chi_n) \in \ker (K^M_n(L)/m\longrightarrow K^M_n(L(X_{\chi_1,\dots ,\chi_n}))/m).
$$ 
This proves Corollary \ref{cor:Pfister-forms-intro}.
\end{proof}

Note that in Corollary \ref{cor:Pfister-forms-intro}, the integer $m$ is not assumed to be invertible in $L$.
Adding this assumption, $X_{\chi_1,\dots ,\chi_n}$ is automatically integral and in fact smooth over $L$ and we obtain the following stronger statement.

\begin{corollary}\label{cor:Pfister-forms} 
Let $n\geq 2 $ be an integer and let $L$ be a field in which $m$ is invertible and let $\chi_1,\dots ,\chi_n\in L^\ast$. 
Consider the smooth hypersurface $X_{\chi_1,\dots ,\chi_n}\subset \CP^{2^{n}-1}_L$ of degree $m$, given by  
$$
 \sum_{\epsilon\in \{0,1\}^n}(-\chi_1)^{\epsilon_1}(-\chi_2)^{\epsilon_2}\dots (-\chi_n)^{\epsilon_n} \cdot y_{\phi(\epsilon)}^m=0 ,
$$
where  
$
\phi(\epsilon)=  \sum_{i=1}^n \epsilon_i \cdot 2^{i-1} 
$. 
Let $Y$ be a variety over $L$ which admits a morphism $\iota:Y\to X_{\chi_1,\dots ,\chi_n}$ of $L$-varieties.
Then
$$
(\chi_1,\dots ,\chi_n)\in \ker \left( K^M_n(L)/m\longrightarrow K^M_n(L(Y))/m \right) .
$$
\end{corollary}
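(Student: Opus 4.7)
The plan is to reduce Corollary \ref{cor:Pfister-forms} directly to part (\ref{item:univ-rel:Y}) of Proposition \ref{prop:univ-rel}, using exactly the same setup as in the proof of Corollary \ref{cor:Pfister-forms-intro} together with the observation that the smoothness of $X_{\chi_1,\dots,\chi_n}$ (which uses the invertibility of $m$ in $L$) makes the regularity hypothesis on $\iota(\eta_Y)$ automatic.

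First I would set up the bookkeeping identically to Corollary \ref{cor:Pfister-forms-intro}: let $k$ be the prime field of $L$, consider $R_{n,2^n-1}=k[x_1,\dots,x_n,y_1,\dots,y_{2^n-1}]$, and let $a_i\in R_{n,2^n-1}$ be the polynomials defined in (\ref{def:ai}). By Proposition \ref{prop:x1,...,xn}, the identity $(x_1,\dots,x_n)=(a_1,\dots,a_n)\in K_n^M(L_{n,2^n-1})/m$ is a universal relation. Define $\phi:R_{n,2^n-1}\to L[y_0,y_1,\dots,y_{2^n-1}]$ by $\phi(x_i)=\chi_i$ and $\phi(y_j)=y_j$, and set $F:=y_0^m-\sum_{i=1}^n\phi(a_i)$; by the recursive identity (\ref{eq:f_n:reciprocal}), the vanishing of $F$ cuts out exactly $X_{\chi_1,\dots,\chi_n}\subset \CP^{2^n-1}_L$.

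Next I would verify the two conditions required to apply Proposition \ref{prop:univ-rel}(\ref{item:univ-rel:Y}) with $W=X_{\chi_1,\dots,\chi_n}$. For irreducibility of $F$ and non-divisibility of each $\phi(a_i)$ by $F$: since all coefficients of the diagonal form (\ref{def:Pfister}) are $\pm$ products of the nonzero $\chi_i$, and $m$ is invertible in $L$, a singular point would force $y_j=0$ for every $j$, which is impossible in projective space; hence $X_{\chi_1,\dots,\chi_n}$ is smooth over $L$, in particular integral, so $F$ is irreducible. The non-divisibility $\phi(a_i)\not\equiv 0\pmod F$ is clear for degree reasons (each $\phi(a_i)$ has degree $m$ and is a nontrivial monomial combination involving $y_0$ in none of its terms, whereas $F$ has $y_0^m$ as a term).

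Finally, since $X_{\chi_1,\dots,\chi_n}$ is smooth, \emph{every} point of $W$ lies in the regular locus, so the hypothesis $\iota(\eta_Y)\in W^{\mathrm{sm}}$ of Proposition \ref{prop:univ-rel}(\ref{item:univ-rel:Y}) is automatic for any morphism $\iota:Y\to X_{\chi_1,\dots,\chi_n}$ of $L$-varieties. Applying that proposition then yields
$$
(\chi_1,\dots,\chi_n)\in \ker\bigl(K_n^M(L)/m\longrightarrow K_n^M(L(Y))/m\bigr),
$$
as desired. There is no genuine obstacle here; the one point to be careful about is the smoothness verification, which is the sole place where the hypothesis that $m$ is invertible in $L$ is used (and which strengthens Corollary \ref{cor:Pfister-forms-intro} from the case $Y=X$ to arbitrary $Y$ mapping to $X$).
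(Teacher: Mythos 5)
Your proposal is correct and follows essentially the same route as the paper: both reduce to Proposition \ref{prop:univ-rel}(\ref{item:univ-rel:Y}) via the universal relation of Proposition \ref{prop:x1,...,xn} and the identity (\ref{eq:f_n:reciprocal}), with the invertibility of $m$ used only to get smoothness of $X_{\chi_1,\dots ,\chi_n}$ by the Jacobian criterion, which makes the regularity hypothesis on $\iota(\eta_Y)$ automatic. Your explicit check that $F\nmid \phi(a_i)$ is a detail the paper leaves implicit (it is verified in the proof of Corollary \ref{cor:Pfister-forms-intro}), and it is correct.
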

\begin{proof}
Let $k$ be the prime field of $L$ and recall $R_{n,s}=k[x_1,\dots ,x_n,y_1,\dots ,y_s]$ from section \ref{sec:universal-rel}.
Let $\phi:R_{n,2^n-1}\to L[y_0,\dots ,y_{2^n-1}]$ be the morphism of $k$-algebras with $\phi(x_i)=\chi_i$ and $\phi(y_j)=y_j$ for all $i=1,\dots ,n$ and $j=1,\dots ,2^n-1$.

Note that $W:=X_{\chi_1,\dots ,\chi_n}$ is defined by the Fermat--Pfister form $\Pf_{m,n}(y_0,\dots ,y_{2^n-1})$ of degree $m$ from (\ref{def:Pfister}), where $x_i$ is replaced by $\chi_i$ for $i=1,\dots ,n$.
Hence, by (\ref{eq:f_n:reciprocal}), 
$$
W= \left\lbrace y_0^m-\sum_{i=1}^n \phi(a_i)=0 \right\rbrace  \subset \CP_L^{2^n-1},
$$
where $a_i=x_i\Pf_{m,n}(y_{2^{i-1}},\dots ,y_{2^i-1})$. 
Recall also that $(x_1,\dots ,x_n)=(a_1,\dots ,a_n)\in K^M_n(L_{n,2^n-1})/m$ is a universal relation in Milnor K-theory modulo $m$ by Proposition \ref{prop:x1,...,xn}.

Since $\chi_i\neq 0$ for all $i$ and $m$ is invertible in $L$, $W=X_{\chi_1,\dots ,\chi_n}$ is smooth over $L$ by the Jacobi criterion.
In particular, the image $\iota(\eta_Y)\in W$ of the generic point of $Y$ lies in the regular locus of $W$ and so Corollary \ref{cor:Pfister-forms} follows from item (\ref{item:univ-rel:Y}) in Proposition \ref{prop:univ-rel}. 
\end{proof}

\section{Unramified cohomology via universal relations}


\begin{definition} \label{def:twisting-type}
Let $k$ be a field.
A homogeneous polynomial $g\in k[x_0,x_1,\dots ,x_n]$ is of twisting type modulo $m$ if for all $ i=0,1,\dots ,n$:
\begin{itemize}
\item $g$ contains the monomials $x_i^{\deg g}$ nontrivially;
\item $g$ is an $m$-th power modulo $x_i$.
\end{itemize} 
An inhomogeneous polynomial $b\in k[x_1,\dots ,x_n]$ is \emph{of twisting type modulo $m$} if its homogenization in $ k[x_0,x_1,\dots ,x_n]$ has this property.
\end{definition}

Note that the degree of a polynomial which is of twisting type modulo $m$ must be a multiple of $m$.
The following slightly technical lemma will be crucial.

\begin{lemma} \label{lem:def:twisting-type}
Let $b\in k[x_1,\dots ,x_n]$ be an inhomogeneous polynomial of twisting type modulo $m$.
Let  $x\in S^{(1)}$ be a codimension one point of some normal birational model $S$ of $\CP^n_k$.
Let $z\in \CP^n_k$ be the image of $x$ under the birational map $S\dashrightarrow \CP^n_k$ and assume that $z$ is the generic point of the intersection of $c\geq 1$ coordinate hyperplanes $\{x_{i_1}=\dots =x_{i_c}=0\}$ with $0\leq i_1<\dots <i_c\leq n$.
Then $b$ becomes an $m$-th power in the fraction field of the completion $\widehat{\mathcal O_{S,x}}$ of the local ring of $S$ at $x$.
\end{lemma}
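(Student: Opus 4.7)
The plan is to show that $b$ becomes an $m$-th power in $\Frac\widehat{\mathcal O_{S,x}}$ by reducing to an explicit computation at $z$ on $\CP^n_k$ and then applying Hensel's lemma directly inside the complete DVR $\widehat{\mathcal O_{S,x}}$. I shall assume $m$ is invertible in $k$, which covers every application of the lemma in this paper.

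Since $\bigcap_{\alpha=1}^c\{X_{i_\alpha}=0\}\subset\CP^n_k$ is nonempty, one has $c\leq n$, and I can pick some $j\in\{0,1,\ldots,n\}\setminus\{i_1,\ldots,i_c\}$ and pass to the affine chart $\{X_j\neq 0\}$ with coordinates $u_l:=X_l/X_j$ for $l\neq j$. In this chart, $\mathcal O_{\CP^n_k,z}$ is the localisation of $k[u_l]$ at the prime $(u_{i_1},\ldots,u_{i_c})$, and writing $d=\deg g$ together with
\[
\tilde g:=g(u_0,\ldots,u_{j-1},1,u_{j+1},\ldots,u_n),
\]
the homogeneity of $g$ gives $b=(X_0/X_j)^{-d}\,\tilde g$, where the prefactor is trivial exactly when $j=0$.

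The twisting type hypothesis now supplies three facts: (i) because $g\equiv h_{i_\alpha}^m\pmod{X_{i_\alpha}}$ and $j\neq i_\alpha$, setting $X_j=1$ commutes with restricting modulo $u_{i_\alpha}$, and iterating yields that $\tilde g|_{u_{i_1}=\cdots=u_{i_c}=0}$ is an $m$-th power in the residue field $\kappa(z)$; (ii) the constant $\tilde g(0,\ldots,0)$ is the coefficient of $X_j^d$ in $g$, hence nonzero, so the $m$-th power in (i) is nonzero and in particular $\tilde g$ is a unit in $\mathcal O_{\CP^n_k,z}\subseteq \mathcal O_{S,x}$ whose image in $\kappa(x)$ (obtained via $\kappa(z)\to\kappa(x)$) is a nonzero $m$-th power; (iii) comparing degrees in $g\equiv h_0^m\pmod{X_0}$ and using that $g$ contains $X_1^d$ forces $m\mid d$.

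I would finish by applying Hensel's lemma to the polynomial $T^m-\tilde g$ inside $\widehat{\mathcal O_{S,x}}$: by (ii) this polynomial has a simple root in $\kappa(x)$, and since $m$ is invertible the derivative $mT^{m-1}$ is a unit at any lift of such a root, so $\tilde g$ admits an $m$-th root in $\widehat{\mathcal O_{S,x}}$. When $j\neq 0$, the extra factor $u_0^{-d}$ is an $m$-th power in $\Frac\widehat{\mathcal O_{S,x}}$ as well: $u_0=X_0/X_j$ lies in $\mathcal O_{S,x}$ and is nonzero, hence invertible in the fraction field of the completion, and $m\mid d$ by (iii). The subtle point is to apply Hensel directly inside $\widehat{\mathcal O_{S,x}}$ rather than lift a root from $\widehat{\mathcal O_{\CP^n_k,z}}$ and transport it: the natural map between these two completions need not be injective (the dimensions are $c$ and $1$ respectively) and so need not extend to fraction fields.
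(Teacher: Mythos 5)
Your argument is correct and follows essentially the same route as the paper: pass to an affine chart containing $z$, use the two twisting-type conditions to show that the residue of the inhomogenized $g$ at $z$ (hence at $x$) is a nonzero $m$-th power, apply Hensel's lemma in the complete DVR $\widehat{\mathcal O_{S,x}}$, and absorb the change-of-chart prefactor using $m\mid\deg g$. The only differences are cosmetic (you work in the chart $X_j\neq 0$ directly and spell out why $m$ divides $\deg g$, and you make explicit the hypothesis that $m$ is invertible, which the paper's Hensel step also tacitly uses).
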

\begin{proof}
Let $g\in k[x_0,x_1,\dots ,x_n]$ be the homogeneous polynomial of twisting type given by homogenization of $b$.
The existence of $z$ implies $c\leq n$ and so there is some index $0\leq i_0\leq n$ with $x_{i_0}(z)\neq 0$.
Let $b'$ be the inhomogeneous polynomial, given by setting $x_{i_0}=1$ in $g$.
Then  
\begin{align*}
b\left( \frac{x_1}{x_0},\frac{x_2}{x_0},\dots ,\frac{x_n}{x_0} \right) 
&=g \left( \frac{x_0}{x_0}, \frac{x_1}{x_0},\frac{x_2}{x_0},\dots ,\frac{x_n}{x_0} \right)\\
&=\left( \frac{x_{i_0}}{x_0} \right)^{\deg g}\cdot  g \left(\frac{x_0}{x_{i_0}},\frac{x_1}{x_{i_0}},\dots ,\frac{x_n}{x_{i_0}} \right)\\
&= \left( \frac{x_{i_0}}{x_0} \right)^{\deg g}\cdot  b'\left( \frac{x_0}{x_{i_0}}  ,\frac{x_1}{x_{i_0}},   \dots ,\widehat {\frac{x_{i_0}}{x_{i_0}}},\dots ,\frac{x_n}{x_{i_0}} \right) .
\end{align*}
Since $g$ is of twisting type modulo $m$, $\deg(g)$ is divisible by $m$ and so $b$ becomes an $m$-th power in $\Frac \widehat{\mathcal O_{S,x}}$ if and only if this holds for $b'$.
For this reason we may without loss of generality assume that $i_0=0$ and so $x_0(z)\neq 0$. 
In particular, the inhomogenization $b$ given by setting $x_0=1$ in $g$ is a rational function on $\CP^n$ that is regular locally at $z$ (e.g.\ on the affine piece $\{x_0\neq 0\}$ which contains the point $z$).
That is, $b$ is contained in the local ring $\mathcal O_{\CP^n,z}\subset  k(\CP^n)=k(x_1,\dots, x_n)$.

Since $g$ contains the monomials $x_i^{\deg g}$ nontrivially for all $i=0,\dots ,n$, we have 
$$
b(0,0,\dots ,0)\neq 0
$$ 
and so $b$ does not vanish at $z$, because $z$ is the generic point of an intersection of $c\geq 1$ hyperplanes.
That is, the image $\overline b$ of $b$ in $\kappa(z)$ is nontrivial.
Moreover, $\overline b$ is an $m$-th power, as $g$ is an $m$-th power modulo $x_i$ for all $i$ and $c\geq 1$ by assumption.
The result thus follows from Hensel's lemma, applied to $\widehat{\mathcal O_{S,x}}$.
\end{proof}

For $n=2$ and $m=2$, the equation of a conic tangent to the three coordinate lines in $\CP^2$ is of twisting type, see \cite{HPT}.
An instructive example for arbitrary $m$ and $n$ is given by
\begin{align} \label{eq:g-twisting-type}
g=G^m+x_0^{em-n}x_1\cdots x_n ,
\end{align}
where $G$ is homogeneous of degree $e$ with $em>n$ and $G$ contains $x_i^e$ nontrivially for all $i=0,1,\dots ,n$.
For $m=2$, this simple but flexible example was used very successfully in \cite{Sch-JAMS}.
The general idea of tangentially meeting degeneracy loci goes back to Artin--Mumford \cite{artin-mumford} and has since then been used by many authors, see e.g.\ \cite{CTO,Pirutka,Sch-duke}.

\begin{theorem} \label{thm:unramified-coho}
Let $m\geq 2$, $n,s\geq 1$ be integers and let $k$ be an algebraically closed field in which $m$ is invertible.
Let $(x_1,\dots ,x_n)=\lambda \cdot (a_1,\dots ,a_n)\in K_n^M(L_{n,s})/m$ be a universal relation in Milnor K-theory modulo $m$ over $k$ and let $b\in k[x_1,\dots ,x_n]$ be an inhomogeneous polynomial of twisting type modulo $m$, see Definitions \ref{def:universal-relation} and \ref{def:twisting-type}.
Assume that the polynomial
\begin{align} \label{eq:F:thm:unramified}
F:=b-\sum_{i=1}^n a_i\in R_{n,s}=k[x_1,\dots ,x_n,y_1,\dots ,y_{s}] 
\end{align}
is irreducible and let $W$ be a projective model of $\{F=0\}\subset \A^{n+s}_k$ such that projection to the $x_i$-coordinates induces a morphism $h:W\to \CP^n_k$.
Let $Y$ be a projective variety over $k$ together with a morphism $\iota:Y\to W$, such that
\begin{itemize}
\item the image $\iota(\eta_Y)$ of the generic point of $Y$ lies in the smooth locus of $W$;
\item the composition $f:=h\circ \iota:Y\longrightarrow \CP^n_k$ is surjective.
\end{itemize} 
Then 
$$
\alpha:=(x_1,\dots ,x_n)\in H^n(k(\CP^n),\mu_m^{\otimes n})
$$
has the following properties.
\begin{enumerate}
\item The pullback $
f^\ast \alpha \in H_{nr}^n(k(Y)/k,\mu_m^{\otimes n})
$
is unramified over $k$. \label{item:unramified}
\item  \label{item:vanishing}
For any generically finite dominant morphism of $k$-varieties $\tau:Y'\to Y$ and any subvariety $E\subset Y'$ which meets the smooth locus of $Y'$ and which does not dominate $\CP^n$ via $f\circ \tau$, we have
$$
(\tau^\ast f^\ast \alpha )|_E=0\in H^n(k(E),\mu_m^{\otimes n}) .
$$
\item 
Assume that there is a discrete valuation ring $R\subset k$ with residue field $\kappa$ and a proper flat $R$-scheme $\mathcal Y\to \Spec R$ with $Y\cong \mathcal Y\times_R k$.
Assume further that $f:Y\to \CP^n_k$ extends to a morphism $f_R:\mathcal Y\to \CP^n_R$ whose base change $f_0:Y_0:=\mathcal Y\times_R\kappa \to \CP^n_\kappa$ to the special point of $\Spec R$ admits a rational section $\xi:\CP^n_\kappa \dashrightarrow Y_0$ whose image lies generically in the smooth locus of $Y_0$ over $\CP^n_\kappa$. 
Then $f^\ast \alpha\in H_{nr}^n(k(Y)/k,\mu_m^{\otimes n})$ has order $m$, i.e., $e\cdot f^\ast \alpha\neq 0$ for all $e=1,2,\dots ,m-1$.\label{item:nonzero}
\end{enumerate}
\end{theorem}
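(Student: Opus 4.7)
The plan is to prove the three items separately, but all three rely on the same mechanism: combine the universal relation with Lemma \ref{lem:(b1,...,bn)=0} applied in a completion where Lemma \ref{lem:def:twisting-type} forces $b$ to be an $m$-th power. For (1), fix a divisorial valuation $\nu$ on $k(Y)/k$. After resolving $Y$ and $\CP^n$ and blowing up suitably, I may assume $\nu$ is the valuation of a prime divisor $D$ on a smooth model $Y^\dagger$ of $Y$ and that $f$ extends to $f^\dagger:Y^\dagger\to S$, where $S$ is a normal birational model of $\CP^n$ and $x:=f^\dagger(\eta_D)\in S^{(1)}$. Let $z\in\CP^n$ be its image. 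If $z$ avoids every coordinate hyperplane, then the $f^*x_i$ are units in $\mathcal O_{Y^\dagger,D}$, so the residue of the symbol $(f^*x_1,\dots,f^*x_n)$ at $\nu$ is trivial. Otherwise Lemma \ref{lem:def:twisting-type} gives $b=(\cdot)^m$ in $\Frac\widehat{\mathcal O}_{S,x}$; since $F=b-\sum a_i=0$ on $W$ and $Y^\dagger\dashrightarrow Y\to W$, the identity $\sum a_i=b=(\cdot)^m$ lifts to $\Frac\widehat{\mathcal O}_{Y^\dagger,D}$, and Lemma \ref{lem:(b1,...,bn)=0} kills $(a_1,\dots,a_n)$ modulo $m$ there. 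The universal relation together with Lemma \ref{lem:universal-rel} then makes $(f^*x_1,\dots,f^*x_n)$ vanish in that Milnor K-theory, so its image $f^*\alpha$ vanishes in the étale cohomology of $\Frac\widehat{\mathcal O}_{Y^\dagger,D}$ by \eqref{eq:bass-tate}. Applying Lemma \ref{lem:spec-prop-etale} to the DVR $\mathcal O_{Y^\dagger,D}$ yields $\partial_\nu f^*\alpha=0$.

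For (2), let $Z$ be the closure of $(f\circ\tau)(\eta_E)$ in $\CP^n$; by hypothesis $\dim Z<n$. If $Z$ is not contained in any coordinate hyperplane, then at $\eta_Z$ the $x_i$ are units, so $\alpha$ lifts to $H^n_{\text{\'et}}(\mathcal O_{\CP^n,\eta_Z},\mu_m^{\otimes n})$ and restricts to a class $\alpha|_Z\in H^n(k(Z),\mu_m^{\otimes n})$; this group vanishes because $k$ is algebraically closed and $k(Z)$ has transcendence degree $\dim Z<n$ over $k$, and by functoriality $(\tau^*f^*\alpha)|_E$ is the pullback of $\alpha|_Z=0$. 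If instead $Z$ lies in a coordinate hyperplane intersection, I replace $Y'$ by a modification mapping to a normal birational model $S$ of $\CP^n$ carrying a codimension one point above $\eta_Z$ to which Lemma \ref{lem:def:twisting-type} applies, and rerun the argument of (1) in the completion of that modification along a codimension one point whose center in $Y'$ contains $E$: $b$ is an $m$-th power there, hence so is $\sum a_i$ by $F=0$, hence $(a_1,\dots,a_n)=0$ modulo $m$ by Lemma \ref{lem:(b1,...,bn)=0}, hence $(f^*x_1,\dots,f^*x_n)=0$ by the universal relation and Lemma \ref{lem:universal-rel}; Lemma \ref{lem:spec-prop-etale} then transports this vanishing down to $k(E)$.

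For (3), I argue by contradiction: suppose $e\cdot f^*\alpha=0$ for some $1\le e<m$. The class $(x_1,\dots,x_n)$ on $\CP^n_R$ pulls back under $f_R$ to a class on $\mathcal Y\setminus f_R^{-1}(\bigcup\{x_i=0\})$ whose restrictions to the generic and special fibres are $f^*\alpha$ and $f_0^*\alpha_\kappa$, where $\alpha_\kappa=(x_1,\dots,x_n)\in H^n(\kappa(\CP^n),\mu_m^{\otimes n})$. Specialization of unramified cohomology then gives $e\cdot f_0^*\alpha_\kappa=0$. Pulling back by the section $\xi$ (whose image lies generically in the smooth locus of $f_0$, so $\xi^*$ is defined on this unramified class) and using $f_0\circ\xi=\id$ generically yields $e\cdot\alpha_\kappa=0$. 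But $\alpha_\kappa$ has order $m$: iterating the residues $\partial_{x_n},\dots,\partial_{x_1}$ from Theorem \ref{thm:ses} along the tower $\kappa(x_1,\dots,x_n)\supset\dots\supset\kappa$ produces $\pm 1\in H^0(\kappa,\Z/m)=\Z/m$, which has order $m$, contradicting $e<m$. The main technical obstacle, I expect, is the bookkeeping in (1) and (2): one must choose compatible modifications of $Y$ (or $Y'$) and $\CP^n$ so that the divisor (or subvariety) under consideration actually sits above a codimension one point of a normal model of $\CP^n$ to which Lemma \ref{lem:def:twisting-type} applies, after which the universal relation and Lemma \ref{lem:(b1,...,bn)=0} dispatch the remaining Milnor K-theory calculation almost mechanically.
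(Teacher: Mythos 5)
Your overall architecture matches the paper's (reduce (1) and (2) to local statements at codimension one points, use the twisting-type lemma plus the universal relation plus Lemma \ref{lem:(b1,...,bn)=0} in a completion, and prove (3) by two specializations followed by an iterated-residue computation), but there is a genuine gap in your case analysis for items (\ref{item:unramified}) and (\ref{item:vanishing}). You split into ``$z$ avoids every coordinate hyperplane'' and ``otherwise, apply Lemma \ref{lem:def:twisting-type}.'' That lemma, however, only applies when $z$ is the \emph{generic point} of the intersection of the coordinate hyperplanes containing it. If $z$ lies in $c\geq 1$ coordinate hyperplanes but has codimension strictly greater than $c$ (e.g.\ $z$ is the generic point of $\{x_1=G=0\}$ for $g=G^m+x_0^{em-n}x_1\cdots x_n$), then $\overline b$ may vanish in $\kappa(z)$, Hensel's lemma is unavailable, and $b$ can have valuation $1$ along $x$ --- so it is certainly not an $m$-th power in $\Frac\widehat{\mathcal O_{S,x}}$ and your mechanism breaks. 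This is not a boundary case: the paper devotes a full second case to it, first normalizing by a symbol manipulation (using $(a,a)=0$ over the algebraically closed $k$) so that one may assume $x_1,\dots,x_c$ vanish at $f(y)$ while $x_0,x_{c+1},\dots,x_n$ do not, and then showing $\del_x\alpha=\bigl(\del_x(x_1,\dots,x_c)\bigr)\cup(x_{c+1},\dots,x_n)$ vanishes because $\kappa(f(y))$ has cohomological dimension $<n-c$; the restriction $\alpha|_x$ vanishes for the same cohomological-dimension reason. The identical gap appears in your second case of item (\ref{item:vanishing}), where ``$Z$ lies in a coordinate hyperplane intersection'' again does not imply $\eta_Z$ is the generic point of that intersection.

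Two smaller points. First, in the good case you apply Lemma \ref{lem:universal-rel} directly over $\Frac\widehat{\mathcal O}_{Y^\dagger,D}$, which tacitly assumes the $a_i$ restrict to invertible elements of $k(Y)$; the hypotheses only guarantee $\iota(\eta_Y)$ lies in the smooth locus of $W$, so one must instead run the relation on $W$ and transport it to $Y$ via the regular local ring $\mathcal O_{W,\iota(\eta_Y)}$ and the specialization Lemma \ref{lem:spec-prop} --- this is exactly what Proposition \ref{prop:univ-rel}(\ref{item:univ-rel:Y}) packages. Second, in item (\ref{item:nonzero}) you pass from $e\cdot f^\ast\alpha=0$ over $k(Y)$ straight to a specialization over $R$; since $k$ is much larger than $\Frac R$, one first has to descend the vanishing to $L'(Y)$ for a finite extension $L'$ of $\Frac R$ (after completing $R$ and using injectivity of $H^n(k(Y),\mu_m^{\otimes n})\to H^n(K(Y),\mu_m^{\otimes n})$ for extensions of the algebraically closed field $k$) and replace $R$ by its integral closure in $L'$. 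These are repairable, but the missing ``higher codimension'' case in (1) and (2) is a substantive omission requiring an argument you have not supplied.
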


\begin{proof} 
Since $E\subset Y'$ in item (\ref{item:vanishing}) meets the smooth locus of $Y'$, we may without loss of generality assume that $Y'$ is normal.
Replacing $Y$ by its normalization, we may then assume that $Y$ is normal as well (because $\tau:Y'\to Y$ factors through the normalization of $Y$, once $Y'$ is normal).
By the same argument as at the beginning of the proof of \cite[Proposition 5.1]{Sch-JAMS}, item (\ref{item:unramified}) and (\ref{item:vanishing}) follow if we can show that for any codimension one point $y\in Y^{(1)}$, which does not map to the generic point of $\CP^n_k$, 
\begin{align} \label{eq:f*alpha=0}
\del_y(f^\ast \alpha)=0 \in H^{n-1}(\kappa(y),\mu_m^{\otimes (n-1)})\ \ \text{and}\ \ (f^\ast \alpha)|_y=0\in H^n(\kappa(y),\mu_m^{\otimes n}) .
\end{align} 

To prove (\ref{eq:f*alpha=0}), let us fix $y\in Y^{(1)}$ as above and let $c$ denote the number of coordinate hyperplanes $\{x_i=0\}\subset \CP^n_k$ which contain the point $f(y)$. 
By \cite[Proposition 1.6]{merkurjev}, we may also choose a normal birational model $S$ of $\CP^n_k$, such that $y$ maps via the induced rational map $Y\dashrightarrow S$ to a codimension one point $x\in S^{(1)}$ on $S$.

Let us first assume that $f(y)\in \CP^n_k$ has codimension $c$.
Then $f(y)$ must be the generic point of an intersection of $c$ coordinate hyperplanes.
(In particular, we have $c\geq 1$, because $f(y)$ is not the generic point of $\CP^n_k$.) 
Since $b$ is of twisting type modulo $m$, it follows from Lemma \ref{lem:def:twisting-type} that 
$b$ becomes an $m$-th power in the fraction field $L:=\Frac \widehat{\mathcal O_{S,x}}$ of the completion $\widehat {\mathcal O_{S,x}}$ of the local ring of $S$ at $x$.

Let $Y_\eta$ and $W_\eta$ be the generic fibres of $f:Y\to \CP^n_k$ and $h:W\to \CP^n_k$, respectively.
These are varieties over the field $k(\CP^n)$.
Since $L$ is a field extension of $k(\CP^n)$, we can consider the $L$-varieties
$$
(Y_\eta)_L:=Y_\eta\times_{k(\CP^n)}L\ \ \text{and}\ \ (W_\eta)_L:=W_\eta\times_{k(\CP^n)}L .
$$
Since $b=(b')^m$ for some $b'\in L$, we find that $(W_\eta)_L$ is birational to
$$
\left\lbrace (b')^m-\sum_{i=1}^na_i=0 \right\rbrace \subset \A^{s}_L .
$$

The morphism $\iota:Y\to W$ induces a morphism $(\iota_\eta)_L:(Y_\eta)_L\to (W_\eta)_L$ and the image of the generic point of $(Y_\eta)_L$ lies in the smooth locus of $(W_\eta)_L$, by assumption.
Since $(x_1,\dots ,x_n)=\lambda\cdot (a_1,\dots ,a_n)\in K^M_n(L_{n,s})/m$ is a universal relation modulo $m$ over $k$, we thus deduce from Proposition \ref{prop:univ-rel}, applied to the natural morphism $\phi:R_{n,s}\to L[Y_\eta]$, induced by $k[x_1,\dots ,x_n]\subset L$, that
\begin{align} \label{eq:LY_eta}
(x_1,\dots,x_n)\in \ker \left( K^M_n(L)/m\longrightarrow K^M_n(L(Y_\eta))/m \right). 
\end{align}

Let now $\widehat{\mathcal O_{Y,y}}$ be the completion of $Y$ at the codimension one point $y$.
Then the fraction field $\Frac  \widehat {\mathcal O_{Y,y}}  $ is a field extension of $L(Y_\eta)$ and so  (\ref{eq:LY_eta}) implies
$$
(x_1,\dots,x_n)\in \ker \left( K^M_n(L)/m\longrightarrow K^M_n\left( \Frac  \widehat {\mathcal O_{Y,y}} \right) /m \right) .
$$ 
Mapping this identity to cohomology via (\ref{eq:bass-tate}), we find that $f^\ast \alpha$ lies in the kernel of the natural map
$$
\varphi:H^n(k(Y),\mu_m^{\otimes n})\longrightarrow H^n(\Frac \widehat {\mathcal O_{Y,y}},\mu_m^{\otimes n}) .
$$
The residue of $f^\ast  \alpha$ at $y$ factors through $\varphi$, and so $\del_y f^\ast \alpha=0$.
This implies 
$$
\varphi(f^\ast  \alpha)=0\in H^n_{\text{\'et}}(\Spec \widehat {\mathcal O_{Y,y}},\mu_m^{\otimes n})\subset H^n(\Frac \widehat {\mathcal O_{Y,y}},\mu_m^{\otimes n}),
$$
where the latter inclusion follows from Theorem \ref{thm:ses}.
Hence, the restriction $(f^\ast \alpha)|_{y}$ factors through $\varphi$ as well and so $(f^\ast \alpha)|_{y}=0$, which concludes (\ref{eq:f*alpha=0}) in this case.

It remains to deal with the case where $f(y)\in \CP^n_k$ has codimension greater than $c$ (e.g.\ this happens if $c=0$). 
Using homogeneous coordinates, we have
$\alpha=\left( \frac{x_1}{x_0},\dots ,\frac{x_n}{x_0} \right)$.
Fix some $j\in\{ 1,\dots ,n\}$.
Multiplying each entry of $\alpha$ by $(x_0/x_j)^m$, we find
\begin{align*}
\alpha=\left( \frac{x_0^{m-1}x_1}{x_j^m},\dots,\frac{x_0^{m-1}}{x_j^{m-1}},\dots ,\frac{x_0^{m-1}x_n}{x_j^m} \right) .
\end{align*}
Since $k$ is algebraically closed, $(-1)\in (K^\ast)^m$ and so $(a,a)=0$ for any $a\in k(\CP^n)^\ast$, see Lemma \ref{lem:(b1,...,bn)=0}.
Applying this to $a=(x_0/x_j)^{m-1}$, the above identity simplifies to
\begin{align*}
\alpha=\left( \frac{x_1}{x_j},\dots,\frac{x_0^{m-1}}{x_j^{m-1}},\dots ,\frac{x_n}{x_j} \right)
=-\left( \frac{x_1}{x_j},\dots,\frac{x_0}{x_j},\dots ,\frac{x_n}{x_j} \right) .\end{align*}
Since it suffices to prove (\ref{eq:f*alpha=0}) after changing the sign of $\alpha$, we may thus, up to relabelling the coordinates, without loss of generality assume that $x_1,\dots ,x_c$ vanish at $f(y)$, while $x_0,x_{c+1},\dots ,x_n$ do not vanish at $f(y)$.

Now the same argument as in Case 2 of the proof of \cite[Proposition 5.1]{Sch-JAMS} applies; we repeat it for convenience of the reader.

First recall the normal birational model $S$ of $\CP^n_k$, such that $y$ maps to a codimension one point $x\in S^{(1)}$ on $S$.
Since $x_0,x_{c+1},\dots ,x_n$ do not vanish at $f(y)$, we get
$$
\del_x\alpha=\left( \del_x\left(x_1,\dots ,x_c \right)\right) \cup (x_{c+1},\dots ,x_n)\in H^{n-1}(\kappa(x),\mu_m^{\otimes (n-1)}),
$$
see e.g.\ \cite[Lemma 2.1]{Sch-JAMS}.
Since $f(y)$ has codimension greater than $c$ and $k$ is algebraically closed, $H^{n-c}(\kappa(f(y)),\mu_m^{\otimes j})=0$ for all $j$. 
Hence, $(x_{c+1},\dots ,x_n)=0\in H^{n-c}(\kappa(f(y)),\mu_m^{\otimes (n-c)})$ and so $\del_x\alpha=0$ by the above formula.
Since $\del_y\alpha$ is up to a multiple given by the pullback of $\del_x\alpha$ (see e.g.\ \cite[Proposition 3.3.1]{CT}), we find that $\del_yf^\ast \alpha=0$.
Moreover, the restriction $f^\ast\alpha|_{y}$ is given by pulling back the restriction $\alpha|_{x}\in H^n(\kappa(x),\mu_m^{\otimes n})$, which vanishes because $\kappa(x)$ has cohomological dimension less than $n$, since $k$ is algebraically closed.
This proves (\ref{eq:f*alpha=0}), which establishes items (\ref{item:unramified}) and (\ref{item:vanishing}) of Theorem \ref{thm:unramified-coho}. 

To prove (\ref{item:nonzero}), we define for any given field $K$, the class
$$
\alpha_K:=(x_1,\dots ,x_n)\in H^n(K(\CP^n),\mu_m^{\otimes n}).
$$
In particular, $\alpha=\alpha_k$ in the notation of Theorem \ref{thm:unramified-coho}.
We then assume for a contradiction that 
for some $e\in \{1,2,\dots ,m-1\}$,
\begin{align} \label{eq:jf*alpha=0}
e\cdot f^\ast \alpha_k=0 \in H^n(k(Y),\mu_m^{\otimes n}) .
\end{align}  

We denote the fraction field of $R$ by $L:=\Frac R$ and consider the morphism $f_R:\mathcal Y\to \CP_R^n$ that extends $f$ by assumption. 
In a first step, we aim to reduce to the case where
\begin{align} \label{eq:jf*alpha=0-L}
e\cdot f_R^\ast \alpha_L=0 \in H^n(L(Y),\mu_m^{\otimes n}) .
\end{align}  
(Here, by slight abuse of notation, we use that the $k$-variety $Y$ can be thought of as a variety over the smaller field $L\subset k$, because  $Y\cong \mathcal Y\times_Rk$ and $L=\Frac R$.) 

Since $k$ is algebraically closed, $H^n(k(Y),\mu_m^{\otimes n})\to H^n(K(Y),\mu_m^{\otimes n})$ is injective for any field extension $K$ of $k$.
(To see this, it suffices to treat the case where $K$ is a finitely generated field extension of $k$, hence the fraction field of an affine $k$-variety $B$ and so the claim follows after specialization to a $k$-point of $B$, which exists because $k$ is algebraically closed.)
We may thus without loss of generality assume that in (\ref{eq:jf*alpha=0}), $k$ is the algebraic closure of $\Frac R$.
Note also that the assumptions in item (\ref{item:nonzero}) of Theorem \ref{thm:unramified-coho} are stable under base change via an extension of discrete valuation rings $R\subset R'$. 
Replacing $R$ by its completion $\hat R$, $\mathcal Y\to \Spec R$ by the corresponding base change and $k$ by the algebraic closure of $\hat R$, we may thus assume that $R$ is complete.
Since $H^n(k(Y),\mu_m^{\otimes n})$ is the direct limit $\lim_{L'} H^n(L'(Y),\mu_m^{\otimes n})$, where $L'$ runs through all finitely generated extensions of $L=\Frac R$, (\ref{eq:jf*alpha=0-L}) shows that there is a finitely generated field extension $L'$ of $\Frac R$ such that $e\cdot f_R^\ast \alpha $ maps to zero on $ H^n(L'(Y),\mu_m^{\otimes n}) $. 
Since $k$ is the algebraic closure of $\Frac R$, $L'$ is in fact a finite field extension of $\Frac R$.
Replacing $R$ by its integral closure in $L'$ (which is again a discrete valuation ring because $R$ is complete, see \cite[Th\'eor\`eme 23.1.5 and Corollaire 23.1.6]{EGAIV}), $\mathcal Y\to \Spec R$ by the corresponding base change and $\kappa$ by the induced finite field extension, we may finally assume that (\ref{eq:jf*alpha=0-L}) holds, as we want.

By assumption, there is a rational section $\xi:\CP^n_\kappa \to Y_0$ such that the image $y_0=\xi(\eta_{\CP^n_\kappa})$ of the generic point of $\CP^n_\kappa$ is contained in the smooth locus of $Y_0$ over $\CP^n_\kappa$ (and hence in particular in the smooth locus of $Y_0$).
Since $R$ is a discrete valuation ring and $Y_0$ is the special fibre of the proper flat morphism $\mathcal Y\to \Spec R$, we find that $y_0$ is contained in a unique irreducible component $Y'_0$ of $Y_0$ and $Y_0$ must be generically reduced along $Y_0'$.
In particular, the local ring $A:=\mathcal O_{\mathcal Y,\eta_{Y_0'}}$ of $\mathcal Y$ at the generic point of $Y_0'$ is a discrete valuation ring with fraction field $L(Y)$, where we recall that $L=\Frac R$.
The morphism $f_R:\mathcal Y\to \CP^n_R$ induces a chain of inclusions $L(\CP^n)\subset A\subset L(Y)$.
Using this, the pullback map $f_R^\ast:H^n(L(\CP^n),\mu_m^{\otimes n})\to H^n(L(Y),\mu_m^{\otimes n})$ factors through $H^n_{\text{\'et}}(\Spec A,\mu_m^{\otimes n})$.
It thus follows from the vanishing in (\ref{eq:jf*alpha=0-L}) and the specialization property in Lemma \ref{lem:spec-prop-etale} that the image of $e\cdot \alpha_L$ in  $H^n_{\text{\'et}}(\Spec A,\mu_m^{\otimes n})$ restricts to zero on the special point of $\Spec A$. 
That is, 
\begin{align} \label{eq:e*f*_0alpha=0}
e\cdot f_0^\ast \alpha_\kappa = 0 \in H^n(\kappa(Y'_0),\mu_m^{\otimes n}) ,
\end{align}
where $f_0:Y_0\to \CP_\kappa^n$ denotes the base change of $f_R:\mathcal Y\to \CP^n_R$ over $R$ to $\kappa$.

Let $B$ be the local ring of $Y_0$ at the generic point $y_0=\xi(\eta_{\CP^n_\kappa})$  of the image of the section $\xi:\CP^n_\kappa \dashrightarrow Y_0$.
Since the image of $\xi$ is generically contained in the 
smooth locus of $Y_0$ over $\CP^n_\kappa$, $y_0$ is a smooth point of the generic fibre of $Y_0\to \CP^n_\kappa$.
This implies that
$B$ is a regular local ring with fraction field $\kappa(Y_0')$, where we recall that $Y'_0$ is the unique component of $Y_0$ that contains $y_0$.
The morphism $f_0:Y_0\to \CP^n_\kappa$  
thus induces a chain of inclusions $\kappa(\CP^n)\subset B\subset \kappa(Y'_0)$.
Using this, the pullback map $f_0^\ast:H^n(\kappa(\CP^n),\mu_m^{\otimes n})\to H^n(\kappa(Y'_0),\mu_m^{\otimes n})$ factors through $H^n_{\text{\'et}}(\Spec B,\mu_m^{\otimes n})$.
The vanishing in (\ref{eq:e*f*_0alpha=0}) and the specialization property in Lemma \ref{lem:spec-prop-etale} thus imply that the image of $e\cdot \alpha_\kappa$ in $H^n_{\text{\'et}}(\Spec B,\mu_m^{\otimes n})$ restricts to zero on the special point $\Spec \kappa(y_0)$ of $\Spec B$.
However, the composition
$$
H^n(\kappa(\CP^n),\mu_m^{\otimes n})\longrightarrow H^n_{\text{\'et}}(\Spec B,\mu_m^{\otimes n})\longrightarrow H^n(\kappa(y_0),\mu_m^{\otimes n}) ,
$$
is an isomorphism, because  $y_0=\xi(\eta_{\CP^n_\kappa})$ is the generic point of the image of the rational section $\xi:\CP^n_\kappa \dashrightarrow Y_0$ of $f_0:Y_0\to \CP^n_\kappa$.
Hence, the aforementioned vanishing in $H^n(\kappa(y_0),\mu_m^{\otimes n})$ implies that 
$$
e\cdot \alpha_\kappa=0\in H^n(\kappa( \CP^n),\mu_m^{\otimes n}).
$$ 
Since $e\in \{1,2,\dots ,m-1\}$ and $\alpha_\kappa=(x_1,\dots ,x_n)$, this statement is false, as one shows by induction on $n$ by taking the residue along $x_n=0$. 
This contradicts (\ref{eq:jf*alpha=0}), which completes the proof of the theorem.
\end{proof}

\begin{remark}
Starting with any universal relation $(x_1,\dots ,x_n)=\lambda\cdot (a_1,\dots ,a_n)\in K_n^M(L_{n,s})/m$,
Theorem \ref{thm:unramified-coho} produces hypersurfaces in $\A^{n+s}_k$ with nontrivial unramified $\Z/m$-cohomology whose degree is roughly the maximum of $m \lfloor \frac{n+1}{m}\rfloor$ (the degree of $g$ in (\ref{eq:g-twisting-type})) and the degrees of the $a_i$.
One source of examples for universal relations is given by Lemma \ref{lem:propagate-univ-rel}, but the notion is much more general than that.
For instance, if $a_i$ for $i=1,\dots ,n$ is as in (\ref{def:ai}), then a similar argument as in Lemma \ref{lem:propagate-univ-rel} 
shows that for any $1\leq i\leq n$:
$$
(x_1,\dots ,x_n)=\left( a_1,\dots ,a_{i-1},a_i\cdot \left( y_{2^n}^m-\sum_{j=1}^{i-1} a_j'\right) ,a_{i+1},\dots ,a_n\right) \in K_n^M(L_{n,2^n+2^i-1})/m
$$
where $a'_j=a_j(x_1,\dots ,x_n,y_{2^n+2^{j-1}},\dots ,y_{2^n+2^j-1})$.
Since $\deg a_i=i+m$, the maximum of the degrees of the entries in the above relation coincides with those of $(x_1,\dots ,x_n)=(a_1,\dots ,a_n)$ from Proposition \ref{prop:x1,...,xn} as long as $i\leq (n-m+1)/2$, but the number of $y$-variables involved in the above relation is larger.
We will however not be able to use such relations in the proof of Theorem \ref{thm:torsion-order}, because the hypersurface in $\CP^{2^n+2^i-1}_K$ over $K=k(x_1,\dots ,x_n)$ given by the projective closure of $F=0$ with $F$ as in (\ref{eq:F:thm:unramified}) is not smooth over $K$.
\end{remark}

\section{Degeneration}

The following proposition generalizes \cite[Proposition 3.1]{Sch-JAMS} to degenerations with reducible special fibres.
The result is a variant of the author's improvement \cite{Sch-duke} of the method of Voisin \cite{voisin} and Colliot-Th\'el\`ene--Pirutka \cite{CTP}.
The original method of Voisin and Colliot-Th\'el\`ene--Pirutka had been generalized to degenerations with reducible special fibres by Totaro \cite{totaro}.

\begin{proposition}\label{prop:degeneration}
Let $R$ be a discrete valuation ring with fraction field $K$ and algebraically closed residue field $k$.
Let $\mathcal X\to \Spec R$ be a proper flat $R$-scheme with geometric generic fibre $X_{\overline \eta}=\mathcal X\times_R \overline K$ and special fibre $X_0=\mathcal X\times_R k$. 
Assume that $X_{\overline \eta}$ is integral.
Let $Y\subset X_0^{\red}$ be an irreducible component of the reduction of $X_0$ and assume that $X_0$ is reduced at the generic point of $Y$.
Let $m\geq 2$ be an integer that is invertible in $k$ and let $\tau:Y'\to Y$ be an alteration whose degree is coprime to $m$. 
Suppose that for some integers $i\geq 1$ and $j$ there is a class $\gamma \in H^i_{nr}(k(Y)/k,\mu_m^{\otimes j})$ of order $m$  
 such that 
$$
(\tau^\ast\gamma)|_{E}=0\in H^i(k(E),\mu_m^{\otimes j})\ \ \text{for any subvariety $E\subset \tau^{-1}(Y\cap X_0^{\sing})$.}
$$ 
Then the torsion order of $X_{\overline \eta}$ is divisible by $m$. 
\end{proposition}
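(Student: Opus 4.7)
The plan is to follow the degeneration template of Voisin and Colliot-Th\'el\`ene--Pirutka, as refined in \cite{Sch-duke,Sch-JAMS}; the only new ingredient compared to \cite[Proposition 3.1]{Sch-JAMS} is the handling of a reducible special fibre, in the spirit of Totaro \cite{totaro}. I would argue by contradiction, assuming the torsion order $e:=\Tor(X_{\overline\eta})$ is not divisible by $m$, and producing a contradiction from the hypothesised class $\gamma$. A decomposition $e\Delta_{X_{\overline\eta}}=[z\times X_{\overline\eta}]+B$ is defined over some finite extension of $K$; after replacing $R$ by the localisation at a maximal ideal of its normalisation in that extension (still a DVR with residue field $k$, as $k$ is algebraically closed), I may assume it is defined over $K$ itself.

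The next step is to take Zariski closures of the three cycles inside $\mathcal X\times_R\mathcal X$. Flatness over $R$ lets the resulting relation specialise to the special fibre $X_0\times_k X_0$, still of the form $e\Delta_{X_0}=[z_0\times X_0]+B_0$ with $B_0$ supported on $D_0\times X_0$ for some proper closed $D_0\subsetneq X_0$. Because $X_0$ is generically reduced along $Y$, the component $Y$ occurs in $X_0$ with multiplicity one, which permits me to restrict the identity to $Y\times Y$ modulo error cycles supported on $(Y\cap X_0^{\sing})\times Y$. Pulling back along $\tau\times\tau$ and multiplying by $\deg(\tau)^2$, I would arrive at an identity
$$
\deg(\tau)^2\,e\,\Delta_{Y'}\equiv [z'\times Y']+B_{Y'}
$$
on some open subset of $Y'\times Y'$, modulo cycles supported over $\tau^{-1}(Y\cap X_0^{\sing})$, where $B_{Y'}$ is supported on $D'\times Y'$ for a proper closed $D'\subsetneq Y'$.

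The final step is to let this correspondence act on the unramified class $\tau^\ast\gamma\in H^i_{nr}(k(Y')/k,\mu_m^{\otimes j})$ and read off a vanishing. The diagonal contributes $\deg(\tau)^2\,e\cdot\tau^\ast\gamma$. The term $[z'\times Y']$ acts as zero because its action factors through Galois cohomology of the algebraically closed field $k$, which vanishes in degree $i\geq 1$. The term $B_{Y'}$ acts through restriction of $\tau^\ast\gamma$ to $D'$; unramifiedness makes this restriction well defined, and it vanishes by the specialisation property (Lemma \ref{lem:spec-prop-etale}) together with the fact that $B$ does not dominate the second factor. The error cycles act trivially by the hypothesis on $\tau^\ast\gamma$. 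Combining, I expect to obtain $\deg(\tau)^2\,e\cdot\tau^\ast\gamma=0$ in $H^i(k(Y'),\mu_m^{\otimes j})$. The trace formula $\tau_\ast\tau^\ast=\deg(\tau)\cdot\id$ combined with $\gcd(\deg(\tau),m)=1$ shows that $\tau^\ast\gamma$ also has order $m$, so this vanishing forces $m\mid e$, contradicting our assumption.

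The hard part, I expect, will be Step 2: carefully controlling the cycles through specialisation and restriction so that restricting the specialised decomposition to the single component $Y\times Y$ really yields the correct multiple of $\Delta_Y$, modulo error terms whose support lies over $Y\cap X_0^{\sing}$. Generic reducedness of $X_0$ along $Y$ should make the refined restriction meaningful, while the vanishing hypothesis on $\tau^\ast\gamma$ on preimages of $Y\cap X_0^{\sing}$ is precisely what is needed to absorb the contributions coming from the other components of $X_0$ when transporting the decomposition to $Y'$ and letting it act on cohomology.
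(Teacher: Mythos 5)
Your overall strategy --- specialize a decomposition of the diagonal, transport it to $Y'$ via the alteration, and pair with $\tau^\ast\gamma$, using the hypothesis on $\tau^{-1}(Y\cap X_0^{\sing})$ to kill the error terms and the coprimality of $\deg(\tau)$ and $m$ to conclude --- is the right one, and your final step matches the paper's endgame. But your Step 2 is where the real content lies, and as written it has a genuine gap. First, a local error: $(\tau\times\tau)^\ast\Delta_Y$ is the class of $Y'\times_Y Y'$, which is $\Delta_{Y'}$ plus off-diagonal components that \emph{do} dominate both factors, so ``pulling back along $\tau\times\tau$ and multiplying by $\deg(\tau)^2$'' does not yield $\deg(\tau)^2\,e\,\Delta_{Y'}$ modulo cycles of the allowed shape; one would have to work with the induced action $\tau^\ast\tau_\ast$ of that correspondence instead. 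Second, and more seriously, the passage from a cycle identity on $X_0\times X_0$ to one on $Y\times Y$ --- which component of $X_0\times_k X_0$ carries the specialized closure of the diagonal, with what multiplicity, and how to restrict to the possibly singular $Y\times Y$ --- is exactly the step you defer, and it is not routine.

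The paper avoids $X_0\times X_0$ altogether. Read the decomposition as the identity $e\cdot\delta_{X_\eta}=z\times K(X_\eta)$ in $\CH_0(X_\eta\times_K K(X_\eta))$ (the term $B$ disappears at the generic point of the second factor precisely because it does not dominate it), and then specialize not over $R$ but over the local ring $A=\mathcal O_{\mathcal X,y}$ at the generic point $y$ of $Y$: since $X_0$ is a Cartier divisor in $\mathcal X$ which is reduced at $y$, the scheme $\mathcal X$ is regular at $y$, so $A$ is a discrete valuation ring with fraction field $K(X_\eta)$ and residue field $k(Y)$. Fulton's specialization map for $\mathcal X_A\to\Spec A$ then gives $e\cdot\delta_Y=z_0\times k(Y)$ in $\CH_0(X_0\times_k k(Y))$ directly, with the diagonal landing on the canonical $k(Y)$-point of $Y$. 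After that one only needs the localization sequence for $U'=\tau^{-1}(Y\setminus X_0^{\sing})\subset Y'$ (pullback of cycle classes is legitimate there because $Y\setminus X_0^{\sing}$ is smooth) together with the projection formula $\langle\tau^\ast\delta_Y,\tau^\ast\gamma\rangle=\deg(\tau)\langle\delta_Y,\gamma\rangle$. I would rework your Step 2 along these lines; the hypotheses ``$X_0$ reduced at the generic point of $Y$'' and ``$E\subset\tau^{-1}(Y\cap X_0^{\sing})$'' are tailored precisely to this formulation.
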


\begin{proof}
We may assume that $e:=\Tor(X_{\overline \eta})$ is finite.
Since torsion orders remain unchanged under passage from an algebraically closed field to a bigger field (see \cite[Lemma 1.11]{CL}), we may after replacing $R$ by its completion assume that $R$ is complete.
The decomposition of $e\cdot \Delta_{X_{\overline \eta}}$ in the Chow group of $X_{\overline \eta}\times X_{\overline \eta}$ holds already over a finite
field extension $L$ of $\Frac(R)$, and so $ X_{\eta}\times L$ has torsion order $e$ for some finite extension $L$ of $\Frac(R)$, where $X_\eta=\mathcal X\times_R K$ denotes the generic fibre.
Since $R$ is a complete discrete valuation ring, the integral closure $R'$ of $R$ in $L$ is again a complete discrete valuation ring, see \cite[Th\'eor\`eme 23.1.5 and Corollaire 23.1.6]{EGAIV}.
Replacing $R$ by the base change to $R'$, we may thus assume that the generic fibre $X_{\eta}$ has torsion order $e$ (note that this does not change the special fibre).

Let $A:=\mathcal O_{\mathcal X,y}$ be the local ring of $\mathcal X$ at the generic point $y\in \mathcal X$ of $Y$. 
Since $X_0$ is a Cartier divisor on $\mathcal X$ which is reduced at $y$, it follows that $\mathcal X$ is regular at $y$.
Hence, $A$ is a discrete valuation ring with fraction field $K(X_{\eta})$.
Let $\delta_{X_{\eta}}\in \CH_0(X_\eta \times K(X_\eta))$ be the class induced by the diagonal.
By assumption,
$$
e\cdot \delta_{X_{\eta}}=z\times K(X_{\eta}) \in \CH_0(X_\eta \times K(X_\eta))
$$
for a zero-cycle $z\in \CH_0(X_\eta)$ of degree $e$.
Applying Fulton's specialization map on Chow groups \cite[\S 20.3]{fulton} to the proper flat family $\mathcal X_A\to \Spec A$, given by base change of $\mathcal X\to \Spec R$, we find that
\begin{align}\label{eq:e*delta_Y}
e\cdot \delta_{Y}=z_0\times k(Y) \in \CH_0(X_0\times k(Y))
\end{align}
for some zero-cycle $z_0\in \CH_0(X_0)$ of degree $e$, where $\delta_Y$ denotes the class of the diagonal of $Y$.
Let $U\subset Y$ be the complement of $Y\cap X_0^{\sing}$.
Since $X_0$ is reduced at the generic point of $Y$, $U$ is a non-empty open subset of $Y$.
Let $U':=\tau^{-1}(U)\subset Y'$.
Note that $U$ is smooth by construction, and so we can pullback cycle classes (modulo rational equivalence) via $U'\to U$ (see \cite[\S 8]{fulton}).
Since $U\to X_0$ is an open embedding, it is flat and so we can pullback cycles (resp.\ cycle classes) via this map as well.
Altogether, we can pullback (\ref{eq:e*delta_Y}) to $U'\times k(Y)$ via the natural map $U'\times k(Y)\to X_0\times k(Y)$.  
Applying the localization exact sequence associated to $U'\subset Y'$, we get
\begin{align}\label{eq:e*tau*delta_Y}
e\cdot \tau^\ast \delta_{Y}=z_{Y'}\times k(Y)+z' \in \CH_0(Y'\times k(Y)),
\end{align}
for some zero-cycle $z_{Y'}\in \CH_0(Y')$ (not necessarily of degree $e$ anymore) and a zero-cycle $z'\in \CH_0(Y'\times k(Y))$ which is supported on
$$
(Y'\setminus U')\times k(Y)=\tau^{-1}(Y\cap X_0^{\sing})\times k(Y).
$$

The end of the proof is now as in \cite[Proposition 3.1]{Sch-JAMS}:
the pairing (see \cite[\S 2.4]{merkurjev}) of the unramified cohomology class $\tau^\ast \gamma\in H^i_{nr}(k(Y')/k,\mu_m^{\otimes j})$ with the right hand side of (\ref{eq:e*tau*delta_Y}) vanishes, because $\tau^\ast \gamma$ vanishes when restricted to closed points of $Y'$ (because $k=\overline k$) or to subvarieties of $\tau^{-1}(Y\cap X_0^{\sing})$ (by assumption), while the left hand side evaluates to 
$$
e\cdot  \langle\tau^\ast \delta_{Y}, \tau^\ast \gamma \rangle=e\cdot  \langle \tau_\ast \tau^\ast \delta_{Y},  \gamma \rangle=e\cdot \deg(\tau)\cdot \gamma\in H^i(k(Y),\mu_m^{\otimes j}).
$$
Hence, $e\cdot \deg(\tau)\cdot \gamma=0$.
Since $\gamma$ has order $m$ and $\deg (\tau)$ is coprime to $m$, this is only possible if $e$ is divisible by $m$, as we want.
This completes the proof.
\end{proof}

\section{Proof of main results} \label{sec:main-proofs}
 
 Theorem \ref{thm:torsion-order} stated in the introduction follows from the following slightly stronger statement.
 
 \begin{theorem}\label{thm:torsion-order-2}
Let $k$ be an uncountable field and let $m\geq 2$ be an integer that is invertible in $k$.
Let $N\geq 3$ be an integer and write $N=n+r$ with $2^{n-1}-2\leq r\leq 2^n-2$. 
Then the torsion order of a very general Fano hypersurface $X_d\subset \CP^{N+1}_{k}$ of degree $d\geq m+n$ is divisible by $m$. 
\end{theorem}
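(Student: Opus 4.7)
\medskip\noindent\textbf{Proof plan.}
By the specialization property of Section~\ref{subsec:degeneration} and the compatibility of torsion orders with base change to algebraically closed field extensions (cf.\ \cite[Lemma 1.11]{CL}), it suffices to exhibit a single proper flat family $\mathcal X \to \Spec R$ over a DVR $R$ with algebraically closed residue field $\bar k$, whose geometric generic fibre is a smooth Fano hypersurface of degree $d \geq m+n$ in $\CP^{N+1}$, and whose special fibre $X_0 \subset \CP^{N+1}_{\bar k}$ contains an irreducible component $Y$ fulfilling the hypotheses of Proposition~\ref{prop:degeneration} for the integer $m$.

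Working on $\CP^{N+1}_{\bar k}$ with coordinates $x_0, x_1, \ldots, x_n, y_1, \ldots, y_{r+1}$, I would define $X_0 = \{F_d = 0\}$ by
$$
F_d \;=\; x_0^{d-em}\, g \;-\; x_0^{d-m-1}\sum_{i=1}^{n} a_i,
$$
where $g = G^m + x_0^{em-n} x_1 \cdots x_n$ is of twisting type modulo $m$ in the sense of Definition~\ref{def:twisting-type} (with $G$ a generic degree-$e$ form containing each $x_i^e$ nontrivially, and $e$ the smallest integer with $em > n$, so that $em \leq m+n \leq d$), and where $a_1, \ldots, a_n$ are obtained from the Fermat--Pfister polynomials of Proposition~\ref{prop:x1,...,xn} by identifying $2^n - r - 2$ pairs among the $2^n - 1$ original $y$-variables—using Lemma~\ref{lem:universal-rel} to preserve a universal relation $(x_1, \ldots, x_n) = \lambda(a_1, \ldots, a_n) \in K^M_n(L_{n,r+1})/m$—so that only $r+1$ $y$-variables remain. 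For generic $G$, $F_d$ factors as $x_0^{\alpha} \tilde F$ with $\tilde F$ reduced and irreducible, $X_0$ is reduced at the generic point of $Y := \{\tilde F = 0\}$, and $\tilde F|_{x_0 = 1} = b - \sum a_i$ with $b = g|_{x_0 = 1}$ of twisting type modulo $m$; this matches the setup of Theorem~\ref{thm:unramified-coho} with $W$ a projective model of $\{F = 0\}$ and $\iota : Y \to W$ the natural identification on $\{x_0 \neq 0\}$.

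Theorem~\ref{thm:unramified-coho} then produces the class $\gamma = f^\ast \alpha \in H^n_{nr}(\bar k(Y)/\bar k, \mu_m^{\otimes n})$ with $\alpha = (x_1, \ldots, x_n)$, satisfying items~(1) and~(2) automatically. To secure item~(3) and thereby that $\gamma$ has exact order $m$, I would construct an auxiliary DVR $R' \subset \bar k$ together with a flat model $\mathcal Y' \to \Spec R'$ of $Y$ whose special fibre $Y_0 \to \CP^n_{\kappa'}$ admits a rational section into its relative smooth locus; concretely, one lets a free coefficient of $g$ (for instance the coefficient of the twist monomial $x_0^{em-n} x_1 \cdots x_n$) specialize to zero along a one-parameter subring $R' \subset \bar k$, so that $Y_0 = \{x_0 G^m = \sum a_i\}$ becomes a Pfister-like hypersurface on which the section is produced from an explicit isotropic vector of the specialized Pfister form. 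Combining $\gamma$ of exact order $m$ with an alteration $\tau : Y' \to Y$ of degree prime to $m$ supplied by de Jong--Gabber (Section~\ref{subsec:alteration}), Proposition~\ref{prop:degeneration} yields $m \mid \Tor(\mathcal X_{\bar \eta})$, hence $m \mid \Tor(X_d)$ for a very general Fano hypersurface of degree $d \geq m+n$. The main technical obstacle lies in establishing item~(3): producing the DVR $R'$ and an explicit rational section $\xi : \CP^n_{\kappa'} \dashrightarrow Y_0$ into the relative smooth locus, since the most naive candidate ($y_i = 0$) lands in the singular locus of the Pfister-type fibration and a delicate adapted choice of $G$ and of the specialized form is required; a secondary obstacle is the $y$-identification scheme realising a degree-$n$ universal relation in only $r+1 \leq 2^n - 1$ variables when $r < 2^n - 2$.
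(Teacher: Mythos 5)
Your overall architecture coincides with the paper's: degenerate a very general hypersurface to a special one built from a twisting-type polynomial $g$ and the Fermat--Pfister data $a_i$, produce $\gamma=f^\ast(x_1,\dots,x_n)$ via Theorem \ref{thm:unramified-coho}, and conclude with an alteration of degree prime to $m$ and Proposition \ref{prop:degeneration}. However, there are two genuine gaps. The first is item (\ref{item:nonzero}) of Theorem \ref{thm:unramified-coho}, which you correctly identify as the main obstacle but do not resolve, and the direction you propose cannot work: if you kill the twist monomial of $g$, the special fibre $Y_0\to\CP^n_\kappa$ becomes (after absorbing $G$ into $z_0$) a fibration by Fermat--Pfister hypersurfaces, and a rational section into its relative smooth locus is a smooth $\kappa(x_1,\dots,x_n)$-point of $X_{x_1,\dots,x_n}$, which by Corollary \ref{cor:Pfister-forms} would force $(x_1,\dots,x_n)=0$ over $\kappa(x_1,\dots,x_n)$ --- false by the residue argument. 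No choice of $G$ fixes this. The paper degenerates the \emph{other} coefficient: it writes $g=t\cdot(\sum_i x_i^{\lceil (n+1)/m\rceil})^m-(-1)^n x_0^{m\lceil(n+1)/m\rceil-n}x_1\cdots x_n$ and lets $t\to 0$, so that the coefficient of $z_0^m$ in the special fibre becomes $-(-1)^nx_1\cdots x_n$ and cancels against the coefficient $(-1)^nx_1\cdots x_n$ of $z_{r+1}^m$; the section $z_0=z_{r+1}=1$, $z_j=0$ is then visible and lies in the smooth locus because $m$ is invertible.

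The second gap is the hypothesis of Proposition \ref{prop:degeneration} for subvarieties $E\subset\tau^{-1}(Y\cap X_0^{\sing})$ that \emph{dominate} $\CP^n_k$: item (\ref{item:vanishing}) of Theorem \ref{thm:unramified-coho} covers only the non-dominating ones, and your proposal is silent on the rest. The paper handles this by replacing $Z=\{F=0\}$ with the blow-up $Y=Bl_PZ$ along the $r$-plane $P=\{x_0=\dots=x_n=0\}$, on which the projection $f$ is a morphism with smooth generic fibre; a dominating $E$ inside the preimage of $Z^{\sing}$ must then lie over the exceptional divisor $D=\{z_0=0\}$, whose generic fibre is a linear section of the smooth Pfister hypersurface $X_{x_1,\dots,x_n}\subset\CP^{2^n-1}_{k(\CP^n)}$, so Corollary \ref{cor:Pfister-forms} gives $\tau^\ast\gamma|_E=0$. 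Finally, your variable-identification scheme for $r<2^n-2$ is unnecessary and adds verification burden (irreducibility, smoothness of the generic fibre after identification): the paper simply truncates, keeping $c_1,\dots,c_r$ together with $c_{2^n-1}=(-1)^nx_1\cdots x_n$, and applies Theorem \ref{thm:unramified-coho} with $Y=Y_{n,r}$ mapped into $W=Y_{n,2^n-2}$, so that the universal relation is only ever needed for the full Fermat--Pfister hypersurface $W$.
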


\begin{remark} \label{rem:thm:torsion-order-2}
The bounds on $r$ in Theorem \ref{thm:torsion-order-2} ensure that any integer $N\geq 3$ can be written uniquely as a sum $N=n+r$ as in the theorem.
In the proof of Theorem \ref{thm:torsion-order-2} below, only the upper bound on $r$ will be used, while the lower bound only appears for convenience as it yields the strongest results on the divisibility of the respective torsion orders in fixed dimension $N$.
\end{remark}

\begin{proof}
Replacing $k$ by its algebraic closure, we may assume that $k$ is algebraically closed.
Denote by
$x_0,\dots ,x_n,y_1,\dots ,y_{r+1}$ homogeneous coordinates of $\CP^{N+1}_k$.
Let $t\in k$ be transcendental over the prime field $k'$ of $k$ (if $\operatorname{char}(k)=0$, we may also take $t$ to be a prime number coprime to $m$),  
and consider 
\begin{align} \label{def:g:proof-of-thm}
g(x_0,\dots ,x_n):=t\cdot \left( \sum_{i=0}^n x_i^{\lceil \frac{n+1}{m} \rceil} \right) ^m-(-1)^{n}x_0^{m \lceil\frac{n+1}{m}\rceil -n}x_1x_2\dots x_n ,
\end{align}
which is a homogeneous polynomial  of degree 
$
\deg(g)=m\lceil \frac{n+1}{m}\rceil\leq m+n 
$
in $k[x_0,\dots ,x_n]$.
It follows directly from Definition \ref{def:twisting-type} that $g$ is of twisting type.


We first deal with the case $d=m+n$ in the statement of Theorem \ref{thm:torsion-order-2}.
Consider the hypersurface $Z:=\{F=0\}\subset \CP^{N+1}_k$ of degree $m+n$, given by 
$$
F:=g(x_0,\dots ,x_n)\cdot x_0^{m+n-\deg (g)}+\sum_{j=1}^r x_0^{n-\deg c_j}c_j(x_1,\dots ,x_n)y_j^m+(-1)^n x_1x_2\cdots x_ny_{r+1}^m ,
$$
where $c_i(x_1,\dots ,x_n)\in k[x_1,\dots ,x_n]$ for $1\leq i\leq 2^n-1$ denote the coefficients of the Fermat--Pfister form (\ref{def:ci}).
Since $g$ in (\ref{def:g:proof-of-thm}) is not divisible by $x_i$ for any $i$, one easily checks that the hypersurface $Z$ is integral.
Consider the $r$-plane $P:=\{x_0=x_1=\dots =x_n=0\}\subset \CP^{N+1}$ and let $Y:=Bl_PZ$.
This blow-up is a hypersurface in $Bl_P(\CP^{N+1})\cong \CP(\mathcal O_{\CP^n}(-1)\oplus \mathcal O_{\CP^{n}}^{\oplus (r+1)})$, given by the equation
\begin{align}\label{eq:Y}
g(x_0,\dots ,x_n)\cdot x_0^{m+n-\deg (g)}z_0^m+\sum_{j=1}^r x_0^{n-\deg c_j}c_j(x_1,\dots ,x_n)z_j^m+(-1)^n x_1x_2\cdots x_nz_{r+1}^m=0 ,
\end{align}
where $z_0$ is a local coordinate that trivializes $\mathcal O_{\CP^n}(-1)$ and $z_1,\dots ,z_{r+1}$ trivialize $ \mathcal O_{\CP^{n}}^{\oplus(r+1)}$.
In the above coordinates, the exceptional divisor $D\subset Bl_PZ$ is given by $z_0=0$.
Projection to the $x_i$-coordinates yields a morphism $f:Y\to \CP^n_k$ whose generic fibre $Y_\eta$ is the smooth hypersurface of degree $m$ and dimension $r+1$ over $K=k(x_1,\dots ,x_n)$, given by setting $x_0=1$ in (\ref{eq:Y}).

To emphasize the dependence on the integers $n$ and $r$, we write $Y=Y_{n,r}$ for the projective variety given by (\ref{eq:Y}).
Then  $Y_{n,r}\subseteq Y_{n,2^n-2}$ because $r\leq 2^n-2$.
We claim that Theorem \ref{thm:unramified-coho} applies to $Y=Y_{n,r}$ and $W=Y_{n,2^n-2}$. 

Recall that $\Pf_{m,n}(y_0,\dots ,y_{2^n-1})=\sum_{j=0}^{2^n-1} c_jy_j^m$ by (\ref{def:ci}) with $c_{2^n-1}=(-1)^nx_1\dots x_n$.
Setting $x_0=z_0=1$ in (\ref{eq:Y}), we thus see that  $W=Y_{n,2^n-2}$ is birational to the affine hypersurface, given by
$$
g(1,x_1,\dots ,x_n)+\Pf_{m,n}(0,y_1,y_2,\dots ,y_{2^n-1})=0 .
$$
By (\ref{eq:f_n:reciprocal}), the above equation can be rewritten as
$$
g(1,x_1,\dots ,x_n)-\sum_{i=1}^na_i=0 
$$
where $a_i=x_i\Pf_{m,i-1}(y_{2^{i-1}},\dots ,y_{2^i -1})$ is as in (\ref{def:ai}).
By Proposition \ref{prop:x1,...,xn}, we have the universal relation
$$
(x_1,\dots ,x_n)=(a_1,\dots ,a_n) \in K_n^M(L_{n,2^n-1})/m .
$$
Since the generic fibre of $W\to \CP^n_k$ is smooth and contains the image of the generic point of $Y\hookrightarrow W$, we conclude that item (\ref{item:unramified}) and (\ref{item:vanishing}) of Theorem \ref{thm:unramified-coho} apply to $Y=Y_{n,r}$ and $W=Y_{n,2^n-2}$.
To see that the assumptions of item  (\ref{item:nonzero})  in Theorem \ref{thm:unramified-coho} are satisfied as well, consider the discrete valuation ring $R=k'[t]_{(t)}\subset k$ with parameter $t$ and residue field $\kappa=k'$, where we recall that $k'$ is the prime field of $k$.
(If $\operatorname{char}(k)=0$ and $t=p$ is a prime number coprime to $m$, then we consider $R=\Z_p$ with residue field $\kappa=\Z/p$.)
Since $Y$ is defined by the equation (\ref{eq:Y}) whose coefficients are all contained in $R$ and whose reduction modulo $t$ is nonzero, it is immediate that $Y$ extends to a proper flat $R$-scheme $\mathcal Y\to \Spec R$, where $\mathcal Y$ is the hypersurface defined by (\ref{eq:Y}) inside the projective bundle $\CP \left( \mathcal O_{\CP_R^n}(-1)\oplus \mathcal O_{\CP_R^{n}}^{\oplus (r+1)} \right) $ over $\CP^n_R$.
Since the morphism $f:Y\to \CP^n_k$ is induced by projection to the $x_i$-coordinates,  $f$ extends to a morphism of $R$-schemes $f_R:\mathcal Y\to \CP^n_R$.
The reduction $Y_0:=\mathcal Y\times_R \kappa$ is given by the equation (\ref{eq:Y}) where $g(x_0,\dots ,x_n)$ from (\ref{def:g:proof-of-thm}) must be replaced by its reduction modulo $t$ (which is
$
-(-1)^{n}x_0^{m \lceil\frac{n+1}{m}\rceil -n}x_1x_2\dots x_n 
$).
Setting $x_0=1$ in this equation, 
 we obtain 
\begin{align} \label{eq:Y_0-x0=1}
 -(-1)^{n} x_1x_2\dots x_nz_0^m+\sum_{j=1}^r c_j(x_1,\dots ,x_n)z_j^m+(-1)^n x_1x_2\cdots x_nz_{r+1}^m=0 .
\end{align}
The morphism $f_0:Y_0\to \CP^n_\kappa$ is given by projection to the $x_i$-coordinates and (\ref{eq:Y_0-x0=1}) describes the preimage $f^{-1}_0(U)$ of
 the affine piece $U:= \CP^n_\kappa\setminus \{x_0=0\}$.
 From this description it is clear that $f_0$
 admits a rational section $\xi:\CP^n_\kappa\dashrightarrow Y_0$, defined by setting $z_0=z_{r+1}=1$ and $z_j=0$ for $1\leq j\leq r$ in (\ref{eq:Y_0-x0=1}).
 The generic fibre of $f_0$ is the hypersurface in $\CP^{r+1}_{k(x_1,\dots ,x_n)}$ given by (\ref{eq:Y_0-x0=1}).
Since $m$ is invertible in $k$, the generic fibre of $f_0$ is smooth and so $\xi(\eta_{\CP^n_\kappa})$ is contained in the smooth locus of $Y_0$ over $\CP^n_\kappa$.
Hence, the assumptions of item  (\ref{item:nonzero})  in Theorem \ref{thm:unramified-coho} are satisfied and we conclude that 
$$
f^\ast (x_1,\dots ,x_n)\in H^n_{nr}(k(Y)/k,\mu_m^{\otimes n})
$$
has order $m$. 

Recall that $Y=Bl_PZ$ is birational to the hypersurface $Z:=\{F=0\}\subset \CP^{N+1}_k$ from above and so the above unramified class yields a class
$$
\gamma\in H^n_{nr}(k(Z)/k,\mu_m^{\otimes n})
$$
of order $m$.
Let $\tau':Y'\to Y$ be an alteration of degree coprime to $m$ (which exists by \cite[Theorem 1.2.5]{temkin} because $m$ is invertible in $k$) and let $\tau:Y'\to Z$ be the induced alteration of $Z$ (which has the same degree as $\tau'$).
Let $E\subset Y'$ be a closed subvariety with $\tau(E)\subset Z^{\sing}$.
If $\tau'(E)\subset Y$ does not dominate $\CP^n_k$ via $f:Y\to \CP^n_k$, then 
$$
\tau^\ast \gamma|_E=0\in H^n(k(E),\mu_m^{\otimes n})
$$
by item (\ref{item:vanishing}) in Theorem \ref{thm:unramified-coho}.
Otherwise, the natural map $E\to \CP^n_k$ induced by $f\circ \tau'$ is surjective and we denote its generic fibre by $E_\eta$.
The alteration $\tau'$ induces a morphism 
$$
\tau'_\eta:E_\eta\longrightarrow Y_\eta,
$$
 where $Y_\eta$ denotes the generic fibre of $f:Y\to \CP_k^n$.
 Since $m$ is invertible in $k$, $Y_\eta$ is smooth.
 This implies that the generic point of $\tau'(E)$ is a smooth point of $Y$. 
 Since $\tau(E)$ lies in the singular locus of $Z$, this implies that the generic point of $\tau'(E)\subset Y$ must lie on the exceptional divisor $D$ of the blow-up $Y=Bl_PZ$.
Hence, 
$$
\tau'_\eta (E_\eta)\subset D_\eta ,
$$ 
where $D_\eta$ denotes the generic fibre of 
 $f|_D:D\to \CP^n_k$.  
As explained above, $D$ is given by setting $z_0=0$ in (\ref{eq:Y}) and so $D_\eta\subset \CP^{r}_{k(\CP^n)}$ is the hypersurface over $k(\CP^n)$, given by
$$
\sum_{j=1}^r  c_j(x_1,\dots ,x_n)z_j^m+(-1)^n x_1x_2\cdots x_nz_{r+1}^m=0 .
$$
Hence, $D_\eta$ is a subvariety of the hypersurface $X_{x_1,\dots ,x_n}\subset \CP^{2^n-1}_L$ over $L=k(\CP^n)$ from Corollary \ref{cor:Pfister-forms} and so the natural map $E_\eta\to D_\eta$ induced by $\tau'_\eta$ induces a morphism $\iota:E_\eta\to X_{x_1,\dots ,x_n}$ of $L$-varieties.
It thus follows from Corollary \ref{cor:Pfister-forms} that
$$
(x_1,\dots ,x_n)\in \ker(K^M_n(k(\CP^n))/m\longrightarrow K^M_n(k(\CP^n)(E_\eta))/m).
$$
Since $k(\CP^n)(E_\eta)=k(E)$, we conclude by mapping this via (\ref{eq:bass-tate}) to cohomology that
$$
\tau^\ast \gamma|_E=0\in  H^n(k(E),\mu_m^{\otimes n}) .
$$

Altogether, this shows that the hypersurface $Z\subset \CP^{N+1}_k$ of degree $d$ satisfies the assumption on the special fibre in the degeneration technique of Proposition \ref{prop:degeneration} and so any integral hypersurface which degenerates to $Z$ (in the sense of Section \ref{subsec:degeneration}) has torsion order divisible by $m$.
This applies in particular to very general hypersurfaces of degree $d=m+n$ in $\CP^{N+1}_k$ (see Section \ref{subsec:degeneration}), which concludes the proof in the case where $d=m+n$.

If $d>n+m$, then a very general hypersurface of degree $d$ in $\CP^{N+1}_k$ degenerates to the union of $Z$ from above with $\{x_0^{d-m-n}=0\}$.
Recall that $Y=Bl_PZ$ (explicitly given by (\ref{eq:Y})) admits a morphism $f:Y\to \CP^n_k$ given by projection to the $x_i$-coordinates.
In particular, the preimage $f^{-1}\{x_0^{d-m-n}=0\}\subset Y$ does not dominate $\CP^n_k$.
Using item (\ref{item:vanishing}) in Theorem \ref{thm:unramified-coho}, we thus conclude as before that for any subvariety 
$$
E\subset \tau^{-1}(Z^{\sing}\cup \{x_0^{d-m-n}=0\}),
$$ 
$\tau^\ast \gamma|_E=0$.
Hence, Proposition \ref{prop:degeneration} applies and we get $m\mid e$ as before.
This concludes the proof of the theorem.
\end{proof}

\begin{proof}[Proof of Theorem \ref{thm:asoks-question}]
Let us now assume that $k=\C$ and let $m,n\geq 2$ and $N \geq 3$ be integers with $\log_2(m+1)\leq n \leq N+1-m$.
We aim to construct a rationally connected smooth complex projective variety $X$ of dimension $N$ such that $H^n_{nr}(\C(X)/\C,\Z/m)$ contains an element of order $m$.
Since $\C$ contains a primitive $m$-th root of unity, we have $H^n_{nr}(\C(X)/\C,\Z/m)\cong H^n_{nr}(\C(X)/\C,\mu_m^{\otimes n})$ and so it suffices to construct an element of order $m$ in the latter group.
Replacing $X$ by a product with projective space, we see that it suffices to deal with the case where 
$$
N=n-1+m\ \ \text{and}\ \ 2^n\geq m+1.
$$

Let $r:=m-1$.
Then $r\leq 2^n-2$ and so, in view of  Remark \ref{rem:thm:torsion-order-2}, we may consider  the projective variety $Y=Y_{n,r}$ such that $H^n_{nr}(\C(Y)/\C,\mu_m^{\otimes n})$ contains an element of order $m$  from the proof of Theorem \ref{thm:torsion-order}.
There is a morphism $f:Y\to \CP^n$ whose generic fibre is a smooth hypersurface of degree $m$ in $\CP_{\C(\CP^n)}^{r+1}$, given by the equation (\ref{eq:Y}). 
Since $m= r+1$, a general fibre of $f$ is Fano and so it is rationally chain connected, see \cite{campana,KMM} or \cite[Theorem V.2.1]{kollar2}, and hence rationally connected because $k=\C$, see \cite{KMM} or \cite[Theorem IV.3.10]{kollar2}. 
It thus follows from the Graber--Harris--Starr theorem \cite{GHS} that any resolution $X$ of $Y$ is a rationally connected variety of dimension $N=n+r$.
Since $X$ is birational to $Y$, $H^n_{nr}(\C(X)/\C,\mu_m^{\otimes n})=H^n_{nr}(\C(Y)/\C,\mu_m^{\otimes n})$ contains an element of order $m$, as we want. 
This concludes the proof of Theorem \ref{thm:asoks-question}.
\end{proof}

\begin{remark} \label{rem:thm:asok:Q}
As indicated in the proof of Theorem \ref{thm:torsion-order-2}, if $\operatorname{char}(k)=0$, then $t$ can be chosen to be a prime number coprime to $m$.
With this choice, the example constructed in the proof of Theorem \ref{thm:asoks-question} will be defined over $\Q$.
\end{remark}

\section{Cyclic covers}  
Theorem \ref{thm:cyclic-cover:intro} follows from the following more precise result.

\begin{theorem} \label{thm:cyclic-cover}
Let $k$ be an uncountable field and let $m\geq 2$ be an integer that is invertible in $k$. 
Let $N\geq 3$ be an integer and write $N=n+r$ with $2^{n-1}-2\leq r\leq 2^n-2$. 
Consider a cyclic $m:1$ cover $X\to \CP^{N}_{k}$ branched along a very general hypersurface  of degree $d$ 
 with $m\mid d$. 
Assume $d\geq m (\lceil \frac{n+1}{m}\rceil +\epsilon)$, where $\epsilon\in \{1,2\}$ is given by
$$
\epsilon :=
\begin{cases}
1\ \ \text{if $m$ divides   $n$, $n-1$ or $n-2$;}\\
2\ \ \text{otherwise.}
\end{cases}
$$  
Then the torsion order of $X$ is divisible by $m$.  
\end{theorem}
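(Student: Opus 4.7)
The plan is to adapt the proof of Theorem~\ref{thm:torsion-order-2} by degenerating a very general cyclic $m\!:\!1$ cover $X\to \CP^N_k$ of degree $d$ to the specific cyclic cover $X_0 = \{w^m = f_0\}$, where
\[
f_0 = g(x_0,\ldots,x_n)\,x_0^{d-\deg g} + \sum_{j=1}^{r-1} c_j(x_1,\ldots,x_n)\,y_j^m\,x_0^{d-m-\deg c_j} + (-1)^n x_1\cdots x_n\, y_r^m\, x_0^{d-m-n},
\]
with $N=n+r$, coordinates $(x_0\!:\!\ldots\!:\!x_n\!:\!y_1\!:\!\ldots\!:\!y_r)$ on $\CP^N$, $g$ the twisting-type polynomial from (\ref{def:g:proof-of-thm}) involving a transcendental $t$, and $c_j$ the Fermat--Pfister coefficients from (\ref{def:ci}). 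The hypothesis $d\geq m(\lceil(n+1)/m\rceil+\epsilon)$ with $\epsilon\in\{1,2\}$ chosen according to $n\bmod m$ is arranged precisely so that all exponents of $x_0$ above are nonnegative and $f_0$ is not a proper $m$-th power; by Section~\ref{subsec:degeneration} a very general cyclic cover of degree $d$ specializes to $X_0$, which is therefore integral.

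Blowing up the base locus $P=\{x_0=\cdots=x_n=0\}$ of the projection to $\CP^n$ gives a variety $Y\to X_0$ with a morphism $f\colon Y\to\CP^n_k$ whose generic fibre over $K=k(\CP^n)$ is the smooth degree-$m$ hypersurface
\[
\bigl\{\,w^m - g y_0^m - \textstyle\sum_{j=1}^{r-1} c_j y_j^m - c_{2^n-1} y_r^m=0\,\bigr\} \subset \CP^{r+1}_K .
\]
I set $\gamma := f^\ast (x_1,\ldots,x_n)\in H^n(k(Y),\mu_m^{\otimes n})$ and verify the three items of the analogue of Theorem~\ref{thm:unramified-coho}. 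For unramifiedness and the vanishing $(\tau^\ast\gamma)|_E=0$ on subvarieties $E\subset \tau^{-1}(X_0^{\sing})$, I would run the two-case residue analysis from the proof of Theorem~\ref{thm:unramified-coho}: at a twisted codimension-one point Lemma~\ref{lem:def:twisting-type} gives $g=(g')^m$ in the completion $L=\Frac\widehat{\mathcal O_{S,x}}$, and after the substitution $\tilde y_0=g' y_0$ the $L$-generic fibre of $f$ is birational to a Fermat--Pfister-like variety, to which Corollary~\ref{cor:Pfister-forms} applies to give $(x_1,\ldots,x_n)=0$ in $K^M_n(L(Y_\eta))/m$ and hence the required residue vanishing via Theorem~\ref{thm:ses}. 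For the order-$m$ condition I would take $R=k'[t]_{(t)}$ (or $\Z_p$ in characteristic zero), extend $X_0$ to an $R$-scheme $\mathcal Y\to\Spec R$, and check that the rational section $\chi\mapsto(y_1=\cdots=y_{r-1}=0,\,y_r=1,\,w=0)$ of $\mathcal Y_0\to\CP^n_\kappa$ lands generically in the relative smooth locus---the partial derivative $\partial f_0|_{t=0}/\partial y_r$ at this point is the nonzero monomial $m(-1)^n x_0^{d-m-n} x_1\cdots x_n$. Proposition~\ref{prop:degeneration} then yields $m\mid\Tor(X)$.

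The main obstacle is the K-theoretic identification in the second paragraph: the cover equation carries an extra ``$-w^m$'' term compared with the hypersurface case, so a naive embedding of the $L$-fibre into the Fermat--Pfister variety $X_{x_1,\ldots,x_n,-1}$ would give only a tautological identity in $K^M_{n+1}$, since $(-1)=0$ in $K^M_1(L)/m$ over algebraically closed $k$. Obtaining a genuine degree-$n$ vanishing therefore requires a more careful analysis that exploits the cyclic cover structure after the twist---one natural route is to study the branch divisor $\{w=0\}\cap Y_\eta$, which does embed into $X_{x_1,\ldots,x_n}$ after the substitution $\tilde y_0=g' y_0$, and to use the Kummer-type structure of the cover to transfer the vanishing from the branch to the full generic fibre. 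The dichotomy $\epsilon\in\{1,2\}$ in the degree bound appears to provide exactly the $x_0$-room needed to make this substitution compatible with the smoothness of the generic fibre over $K$ and with the existence of the rational section $\xi$.
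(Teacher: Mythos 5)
Your overall strategy (degenerate to a special cyclic cover, blow up $P=\{x_0=\cdots=x_n=0\}$, apply Theorem \ref{thm:unramified-coho} and Proposition \ref{prop:degeneration}) matches the paper, but your choice of branch divisor creates exactly the gap you flag at the end, and that gap is fatal as the argument stands. With $f_0$ as you wrote it, the generic fibre of $Y\to\CP^n_k$ is $\{w^m-gy_0^m-\sum_j c_jy_j^m-c_{2^n-1}y_r^m=0\}$, and after the twist $\tilde y_0=g'y_0$ at a twisted point the relation on the fibre reads $\sum_i a_i'=\tilde y_0^m-w^m$, which is \emph{not} an $m$-th power, so Lemma \ref{lem:(b1,...,bn)=0} and Proposition \ref{prop:univ-rel} do not apply; the only identity you get is the tautology $(a_1',\dots,a_n',w^m)=0$ in degree $n+1$. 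Your proposed repair --- restricting to the branch divisor $\{w=0\}\cap Y_\eta$ and ``transferring'' the vanishing by the Kummer structure --- does not work: knowing that $(x_1,\dots,x_n)$ dies in the function field of a divisor of $Y_\eta$ gives no information about its image in $K^M_n(L(Y_\eta))/m$, and no transfer mechanism is supplied. (A further symptom that the construction is off: for your $f_0$ the exponents of $x_0$ are nonnegative already with $\epsilon=1$ in all cases, so the stated dichotomy would be superfluous.)

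The paper's fix is a specific algebraic trick you are missing: the branch divisor is taken to be
$$
F=x_1^{m-1}g\cdot x_0^{d-\deg(g)-m+1}+x_1^{m-1}\sum_{j=2}^{r}x_0^{d-2m+1-\deg c_j}c_jy_j^m+(-1)^nx_0^{d-m-n+1}x_2\cdots x_ny_{r+1}^m,
$$
i.e.\ twisted by $x_1^{m-1}$ and with the ambient coordinate $y_1$ deliberately omitted. Multiplying the cover equation $y_1^m=F$ by $x_1$ and absorbing $m$-th powers into the variables turns $x_1y_1^m$ into $-c_1y_1^m$ (since $c_1=-x_1$), so the new cover coordinate $y_1$ becomes precisely the missing Pfister variable: the resolved cover is birational to the \emph{same} hypersurface $Z$ used in the proof of Theorem \ref{thm:torsion-order-2}, and the entire unramified-cohomology analysis (including the vanishing on the exceptional divisor via Corollary \ref{cor:Pfister-forms}) carries over verbatim. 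This is also where the $\epsilon$-dichotomy comes from: the extra degree $m-1$ of the factor $x_1^{m-1}$ is what forces $\epsilon=2$ unless $m$ divides $n$, $n-1$ or $n-2$. Without this twist your degree-$n$ symbol does not die on the generic fibre, and the proof does not go through.
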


\begin{proof}
Replacing $k$ by its algebraic closure, we may assume that $k$ is algebraically closed.
Let $x_0,\dots ,x_n,y_2,\dots ,y_{r+1}$ be homogeneous coordinates on $\CP^{N}_k$ (note that we left out $y_1$).  
Let $d\geq m (\lceil \frac{n+1}{m}\rceil +\epsilon)$ be an integer that is divisible by $m$.
Let $g\in k[x_0,\dots ,x_n]$ be the polynomial from (\ref{def:g:proof-of-thm}) and consider 
the hypersurface 
$$
B:=\{F=0\}\subset \CP^N_k
$$ 
of degree $d$, given by
$$
F:=x_1^{m-1}g\cdot x_0^{d-\deg (g)-m+1}+x_1^{m-1}\sum_{j=2}^r x_0^{d-2m+1-\deg c_j}c_jy_j^m+(-1)^n x_0^{d-m-n+1}x_2\dots x_ny_{r+1}^m ,
$$
where $c_j\in k[x_1,\dots ,x_n]$ is as in (\ref{def:ci}).
Here the sum in the middle is empty if $r=1$.

We claim that our lower bound on $d$ ensures that $F$ is indeed a polynomial, i.e.\ the exponents of $x_0$ are non-negative.
To see this, write $n=a+bm$ for integers $b\geq 0$ and $m>a\geq 0$.
Then $\deg(g)=m\lceil\frac{n+1}{m} \rceil =m(b+1)$ and the lower bound on $d$ reads as $d\geq m(b+1+\epsilon)$, where $\epsilon =1$ if $a\leq 2$ and $\epsilon =2$ otherwise. 
This implies
$$
d-\deg (g)-m+1=d-m(b+1)-m+1\geq 1+m(\epsilon -1)>0.
$$
Moreover, for $j\leq r$, $\deg(c_j)\leq n-1$ and so
\begin{align*}
d-2m+1-\deg c_j\geq d-2m+2-n &\geq m(b+1+\epsilon)-2m+2-a-bm\\
&=2-a+m(\epsilon -1)\geq 0,
\end{align*}
because $0\leq a<m$ and $\epsilon =1$ if $a\leq 2$ and $\epsilon =2$ otherwise.
Also, since $m\geq 2$, 
$$
d-m-n+1> d-2m+2-n\geq 0
$$
by the above estimate.
Altogether, this shows that $F$ is a polynomial, as we want.

Consider the linear subspace  $P:=\{x_0=x_1=\dots =x_n=0\}\subset \CP^N_k$ of codimension $n+1$.
The blow-up $Bl_P\CP^N_k$ admits a natural morphism $\pi:Bl_P\CP^N_k\to \CP^n_k$ that is induced by projection to the $x_i$-coordinates.
This morphism  identifies $Bl_P\CP^N_k$ with the projectivization (of one-dimensional subspaces) of the vector bundle $\mathcal O(-1)\oplus \mathcal O^{\oplus r}$ on $\CP^n_k$:
$$
Bl_P\CP^N_k\cong \CP(\mathcal E),\ \ \text{where}\ \ \mathcal E=\mathcal O_{\CP^n}(-1)\oplus \mathcal O_{\CP^n}^{\oplus r}.
$$
The proper transform $\tilde B\subset Bl_P\CP^N_k$ of $B$ is defined by  
$$
\tilde F:=x_1^{m-1}g\cdot x_0^{d-\deg (g)-m+1}y_0^m+x_1^{m-1}\sum_{j=2}^r x_0^{d-2m+1-\deg c_j}c_jy_j^m+(-1)^n x_0^{d-m-n+1}x_2\cdots x_ny_{r+1}^m ,
$$
which is
a section of the line bundle $\mathcal O_{\CP(\mathcal E)}(m)\otimes \pi^\ast \mathcal O_{\CP^n}(d-m)$ on $\CP(\mathcal E)$, where 
$$
x_0,\dots ,x_n\in H^0(\CP(\mathcal E),\pi^\ast \mathcal O_{\CP^n}(1))\cong H^0(\CP^n,\mathcal O_{\CP^n}(1))
$$
form a basis and where
$$
y_0\in H^0(\CP(\mathcal E),\mathcal O_{\CP(\mathcal E)}(1)\otimes \pi^\ast \mathcal O_{\CP^n}(-1))
$$
and 
$$
y_2,\dots ,y_{r+1}\in H^0(\CP(\mathcal E),\mathcal O_{\CP(\mathcal E)}(1))
$$ are defined
 as follows.
We have
$$
\pi_\ast \mathcal O_{\CP(\mathcal E)}(1)\cong \mathcal E^\vee=\mathcal O_{\CP^n}(1)\oplus \mathcal O_{\CP^n}^{\oplus r}
$$
and 
$$
y_0\in H^0(\CP(\mathcal E),\mathcal O_{\CP(\mathcal E)}(1)\otimes \pi^\ast \mathcal O_{\CP^n}(-1))\cong H^0(\CP^n,\mathcal O_{\CP^n})\cong k
$$
is any generator, while for $j=2,\dots ,r+1$,
$$
y_j\in H^0(\CP(\mathcal E),\mathcal O_{\CP(\mathcal E)}(1))\cong H^0(\CP^n,\mathcal O_{\CP^n}(1))\oplus H^0(\CP^n,\mathcal O_{\CP^n})^{\oplus r}
$$
is a generator of the $(j-1)$-th summand of $H^0(\CP^n,\mathcal O_{\CP^n})^{\oplus r}\cong k^{\oplus r}$. 
In the above coordinates, the exceptional divisor of $Bl_P\CP^N_k\to \CP^N_k$ is given by $y_0=0$. 

Since $F$ has degree $d$, multiplicity $d-m$ along $P$ and $m\mid d$, the divisor $\tilde B\subset Bl_P\CP^N_k$ is (uniquely) divisible by $m$ in the Picard group of $Bl_P\CP^N_k$ and so the cyclic $m:1$ cover 
$$
Y\longrightarrow Bl_P\CP^N_k,
$$
branched along $\tilde B$ 
exists.
In the above coordinates, $Y$ is given by 
$$
y_1^m=x_1^{m-1}g\cdot x_0^{d-\deg (g)-m+1}y_0^m+x_1^{m-1}\sum_{j=2}^r x_0^{d-2m+1-\deg c_j}c_jy_j^m+(-1)^n x_0^{d-m-n+1}x_2\cdots x_ny_{r+1}^m ,
$$
where $y_1$ is a new coordinate.

Let $Z\to \CP^N_k$ be the cyclic $m:1$ cover, branched along $B=\{F=0\}$.
We claim that the Stein factorization of $Y\to \CP^N_k$ induces a birational morphism
$$
\sigma:Y\longrightarrow Z,
$$ 
whose exceptional locus is given by the divisor
$$
D:=\{y_0=0\}\subset Y.
$$ 
To see this, note that the natural morphism $Y\to \CP^N_k$ contracts the divisor $D$ to $P\subset \CP^N_k$, while $Y\setminus D\to \CP^N_k\setminus P$ is finite of degree two.
Moreover, the restrictions of $Y\to \CP^N_k$ and $Z\to \CP^N_k$ to $\CP^N_k\setminus P$ are naturally isomorphic.
Let $Y\to Z'\to \CP^N_k$ be the Stein factorization of  $Y\to \CP^N_k$.
Then $Z'$ is a normal projective variety with a finite morphism $Z'\to \CP^N_k$ which coincides with the cyclic cover $Z\to \CP^N_k$ away from $P$.
In particular, there is natural birational map $Z'\dashrightarrow Z$ over $\CP^N_k$ which is an isomorphism over $\CP^n_k\setminus P$.
Serre's criterion shows that $Z$ is normal (as it is a hypersurface in a smooth variety, hence Cohen--Macauly, and regular in codimension one because $\{F=0\}$ is reduced).
Let $W\to Z'$ be a birational morphism from a normal projective variety $W$ so that $Z'\dashrightarrow Z$ induces a morphism $W\to Z$.
By Zariski's main theorem, $W\to Z'$ has connected fibres.
Since the following diagram commutes
$$
\xymatrix{
W\ar[r] \ar[d] & Z \ar[d]\\
Z'\ar[r] &\CP^N_k,
}
$$
and $Z\to \CP^N_k$ and $Z'\to \CP^N_k$ are both finite,  any fibre of $W\to Z'$ is contracted by $W\to Z$.
The rigidity lemma thus shows that $W\to Z$ factors through $W\to Z'$ and hence descends to a morphism $Z'\to Z$, which must be an isomorphism by Zariski's main theorem, because it is a birational finite morphism between normal varieties.
The composition $Y\to Z'\stackrel{\sim}\to Z$ thus induces the birational morphism $\sigma:Y\longrightarrow Z$ whose exceptional locus is given by $D=\{y_0=0\}$, as we want.

The  projection $\pi:Bl_P\CP^N_k\to \CP^n_k$ to the $x_i$-coordinates induces a morphism $f:Y\to \CP^n_k$ whose generic fibre is given by the hypersurface in $\CP^{r}_{k(\CP^n)}$, given by 
$$
y_1^m=x_1^{m-1}g y_0^m+x_1^{m-1}\sum_{j=2}^r  c_jy_j^m+(-1)^n  x_2\cdots x_ny_{r+1}^m  ,
$$
where $k(\CP^n)=k(x_1,\dots ,x_n)$.
This is smooth because $m$ is invertible in $k$.

Multiplying the equation $y_1^m=\tilde F$ of $Y$ with $x_1$ and absorbing $x_1^m$ into the $y_i$ variables whenever possible, we find after setting $x_0=y_0=1$ that $Y$ is birational to the affine hypersurface in $\A^{N+1}_k$, given by
$$
x_1y_1^m= g +\sum_{j=2}^r c_jy_j^m+(-1)^nx_1 x_2\cdots x_ny_{r+1}^m .
$$
Since $c_1=-x_1$ by (\ref{def:ci}), this is exactly the hypersurface used in the proof of Theorem \ref{thm:torsion-order-2} in Section \ref{sec:main-proofs}.
The same argument as in the proof of that result now shows that $f:Y\to \CP^n_k$ satisfies the assumptions in Theorem \ref{thm:unramified-coho}.
In particular,
$$
\gamma:= f^\ast (x_1,\dots ,x_n)\in H^n_{nr}(k(Y)/k,\mu_m^{\otimes n})
$$
is an unramified class of order $m$.

We claim that for any alteration $\tau':Y'\to Y$, the induced alteration 
$$
\tau:=\sigma\circ \tau':Y'\to Z
$$ 
has the property that for any subvariety $E\subset \tau^{-1}(Z^{\sing})$, $\tau^\ast \gamma|_E=0$.
If $E$ does not dominate $\CP^n_k$ via $f\circ \tau'$, then the vanishing in question follows directly from item (\ref{item:vanishing}) in Theorem \ref{thm:unramified-coho}.
Otherwise, since the generic fibre of $f$ is smooth, we find that 
$$
f\circ \tau'(E)\subset D\subset Y ,
$$
where we recall that $D=\{y_0=0\}$ is the exceptional locus of $\sigma:Y\to Z$.
The generic fibre $D_\eta$ of $D\to \CP^n_k$ is the  hypersurface in $\CP^{r}_{k(\CP^n)}$, given by the equation
$$
-y_1^m+x_1^{m-1}\sum_{j=2}^r  c_jy_j^m+(-1)^n  x_2\cdots x_ny_{r+1}^m=0.
$$
Multiplying this equation through by $x_1$ and absorbing $x_1^m$ into the $y_i$-variables whenever possible, we find that in these new coordinates $D_\eta\subset \CP^r_{k(\CP^n)}$ is given by
$$
-x_1y_1^m+ \sum_{j=2}^r  c_jy_j^m+(-1)^n  x_1x_2\cdots x_ny_{r+1}^m=0.
$$
Since $c_1=-x_1$, we find that $D_\eta$ is a subvariety of the hypersurface $X_{x_1,\dots ,x_n}\subset \CP^{2^n-1}_L$ over $L=k(\CP^n)$ from Corollary \ref{cor:Pfister-forms}.
Since $E$ dominates $\CP^n_k$ in the present case, the generic point of $\tau'(E)\subset D$ lies on the generic fibre $D_\eta$ of $D\to \CP^n_k$ and so
$\tau^\ast \gamma|_E=0$ follows from Corollary \ref{cor:Pfister-forms}.
Altogether we conclude that for any subvariety $E\subset \tau^{-1}(Z^{\sing})$, $\tau^\ast \gamma|_E=0$, as we want.

By what we have seen above, $Z$ satisfies the assumptions of the special fibre in the degeneration technique from
Proposition \ref{prop:degeneration}.
Hence, Proposition \ref{prop:degeneration}  implies that $m$ divides the torsion order of a cyclic $m:1$ cover of $\CP^N_k$, branched along a very general hypersurface of degree $d$, as the latter specializes to $Z$, see Section \ref{subsec:degeneration}. 
This concludes the proof.
\end{proof}

\section{Explicit examples} \label{sec:examples}

In this section we give some explicit examples of hypersurfaces with large torsion orders that are defined over small fields, such as $\Q(\mathbb A^1)$ or $\F_p(\mathbb A^2)$.
We leave it to the reader to write down similar explicit examples of cyclic covers as in Theorem \ref{thm:cyclic-cover} over fields such as $\Q(\mathbb A^1)$ or $\F_p(\mathbb A^2)$.

Fix a positive integer $m$ and let $p$ be a prime number coprime to $m$.
Let $N\geq 3$ be an integer and write $N=n+r$ with $2^{n-1}-2\leq r\leq 2^n-2$. 
Let $x_0,x_1,\dots ,x_n,y_1,\dots ,y_{r+1}$ be homogeneous coordinates on $\CP^{N+1}$ and consider the following polynomials:
\begin{align*} 
&g(x_0,\dots ,x_n):=p\cdot \left(  \sum_{i=0}^n x_i^{\lceil \frac{n+1}{m} \rceil} \right)  ^m-(-1)^{n}x_0^{m \lceil\frac{n+1}{m}\rceil -n}x_1x_2\dots x_n , \\
&F:=g(x_0,\dots ,x_n)\cdot x_0^{m+n-\deg (g)}+\sum_{j=1}^r x_0^{n-\deg c_j}c_j(x_1,\dots ,x_n)y_j^m+(-1)^n x_1x_2\cdots x_ny_{r+1}^m ,
\end{align*}
where $c_j(x_1,\dots ,x_n)\in \Z[x_1,\dots ,x_n]$ denote the coefficients of the Fermat--Pfister form (\ref{def:ci}).
(Explicitly, $c_j(x_1,\dots ,x_n)=  \prod_{i=1}^n(-x_i)^{\epsilon_i}$ where the $\epsilon_i\in \{0,1\}$ are such that $j=\sum_{i=1}^n \epsilon_i 2^{i-1}$.)

\begin{corollary}
In the above notation, let $d\geq m+n$ be an integer and let $s\in \C$ be a transcendental number.
Then the smooth complex projective hypersurface
\begin{align} \label{eq:explicit-example}
X:= \left\lbrace F\cdot x_0^{d-m-n}+s\cdot \left( \sum_{i=0}^n x_i^d+\sum_{j=1}^{r+1}y_j^d\right) =0 \right\rbrace \subset \CP^{N+1}_\C 
\end{align}
 is an explicit example of a smooth projective variety whose torsion order is divisible by $m$.
Moreover, this example is defined over the countable field $\Q(s)$.
\end{corollary}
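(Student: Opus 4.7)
The plan is to apply the degeneration Proposition \ref{prop:degeneration} with the parameter $s\to 0$, reusing essentially all the machinery from the proof of Theorem \ref{thm:torsion-order-2}. I would set up the discrete valuation ring $R:=\C[s]_{(s)}$, whose fraction field is $\C(s)$ and whose (algebraically closed) residue field is $\C$, and consider the proper flat $R$-scheme $\mathcal X\to \Spec R$ cut out by equation (\ref{eq:explicit-example}). Its special fibre is
$$
X_0:=\{F\cdot x_0^{d-m-n}=0\}\subset \CP^{N+1}_{\C},
$$
while its generic fibre $\mathcal X_{\C(s)}$ base-changes, via the embedding $\C(s)\hookrightarrow \C$ sending the variable to our specific transcendental number, to the variety $X$. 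Hence $X$ degenerates to $X_0$ in the sense of Section \ref{subsec:degeneration}, and by the invariance of torsion orders under base change between algebraically closed fields \cite[Lemma 1.11]{CL} it is enough to show $m\mid \Tor(\mathcal X_{\overline{\C(s)}})$.

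The distinguished irreducible component $Y:=\{F=0\}$ of $X_0^{\red}$ coincides with the special fibre $Z$ of the proof of Theorem \ref{thm:torsion-order-2}, the only difference being that the transcendental parameter $t$ in formula (\ref{def:g:proof-of-thm}) has been replaced by the prime $p$ coprime to $m$. As explicitly allowed in that proof and in Remark \ref{rem:thm:asok:Q}, this substitution is harmless: the nontriviality statement of item (\ref{item:nonzero}) of Theorem \ref{thm:unramified-coho} is then certified using the DVR $\Z_{(p)}$ with residue field $\F_p$, in which $m$ is invertible. The integrality of $Y$ is preserved (since $g$ remains non-divisible by any $x_i$), so the construction of the unramified class $\gamma=f^\ast(x_1,\dots,x_n)\in H^n_{nr}(\C(Y)/\C,\mu_m^{\otimes n})$ of order $m$ on the blow-up $Bl_P Y$, together with the case-by-case verification that $(\tau^\ast\gamma)|_E=0$ for every subvariety $E\subset \tau^{-1}(Y\cap X_0^{\sing})$---via item (\ref{item:vanishing}) of Theorem \ref{thm:unramified-coho} when $E$ does not dominate $\CP^n_{\C}$, which absorbs contributions from the extra component $\{x_0=0\}$ of $X_0$, and via Corollary \ref{cor:Pfister-forms} otherwise---transfers verbatim from the proof of Theorem \ref{thm:torsion-order-2}. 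Proposition \ref{prop:degeneration} then yields $m\mid \Tor(X)$.

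Two auxiliary points remain. First, $X$ must be smooth: the pencil spanned by $\{F\cdot x_0^{d-m-n}=0\}$ and the smooth Fermat hypersurface $\{\sum_{i=0}^n x_i^d+\sum_{j=1}^{r+1}y_j^d=0\}$ contains a smooth member (the Fermat one), so its singular locus inside $\CP^1$ is a proper closed subset defined over $\Q$, hence a finite subset of $\overline\Q$; transcendence of $s$ ensures $X$ avoids it. Second, the $\Q(s)$-rationality is immediate because every coefficient in (\ref{eq:explicit-example}) already lies in $\Z[s]\subset \Q(s)$. The only step I view as genuinely non-cosmetic is the propagation of the nontriviality argument from the transcendental parameter $t$ to the arithmetic parameter $p$, but this is precisely the scenario anticipated by the parenthetical comment at the beginning of the proof of Theorem \ref{thm:torsion-order-2} and by Remark \ref{rem:thm:asok:Q}, so no new ideas are required.
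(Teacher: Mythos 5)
Your proposal is correct and follows essentially the same route as the paper: degenerate via $s\to 0$ to $Z\cup\{x_0^{d-m-n}=0\}$ with $t$ replaced by the prime $p$ (certifying nontriviality over $\Z_p$ with residue field $\F_p$), reuse the verification from the proof of Theorem \ref{thm:torsion-order-2} that the hypotheses of Proposition \ref{prop:degeneration} hold, and check smoothness of $X$ using that the pencil contains the smooth Fermat member and $s$ is transcendental. Your smoothness argument is if anything slightly more explicit than the paper's parenthetical remark, but it is the same idea.
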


\begin{proof} 
One easily checks that $X$ is smooth (e.g.\ because it degenerates to the smooth hypersurface $\left\lbrace \sum_{i=0}^n x_i^d+\sum_{j=1}^{r+1}y_j^d=0 \right\rbrace $ via $s\to \infty$, c.f.\ Section \ref{subsec:degeneration}).
Moreover, $X$ specializes via $s\to 0$ to the union $Z\cup \{x_0^{d-m-n}=0\}$, where $Z=\{F=0\}$ is the hypersurface used in the proof of Theorem \ref{thm:torsion-order-2} (where $t$ is taken to be the prime number $p$ coprime to $m$, which works in characteristic zero, as indicated in the proof of Theorem \ref{thm:torsion-order-2}).
As we have seen in the proof of Theorem \ref{thm:torsion-order-2}, the assumptions of Proposition \ref{prop:degeneration} are satisfied in this situation and so we conclude from that result that the torsion order of $X$ is divisible by $m$, as we want. 
\end{proof}

\begin{remark}
Let $p$ be a prime number coprime to $m$.
It is then easy to adjust the above arguments to obtain explicit examples of smooth projective hypersurfaces $X\subset \CP^{N+1}_k$ in the above degree range over the function field $k=\F_p(s,t)$ of $\A^2_{\F_p}$ such that the torsion order of $X_{\overline k}$ is divisible by $m$.
Explicitly, the corresponding examples will be given by replacing  the prime number $p$ in (\ref{eq:explicit-example}) by the variable $t$.
\end{remark}

\section*{Acknowledgements}  
I am grateful to Burt Totaro for useful conversations and for arranging my visit to UCLA in fall 2019, during which this work was done. 
I am also grateful to Jan Hennig for comments and to the excellent referees for many valuable comments, suggestions and corrections.
Further thanks go to a referee and Jean-Louis Colliot-Th\'el\`ene for pointing out Lemma \ref{lem:spec-prop}.


\end{document}